\definecolor{webgreen}{rgb}{0,.5,0}
\definecolor{webbrown}{rgb}{.6,0,0}
\newtheorem{thm}{Theorem}[section]
\newtheorem{prop}[thm]{Proposition}
\newtheorem{cor}[thm]{Corollary}
\newtheorem{lemma}[thm]{Lemma}
\theoremstyle{definition}
\newtheorem{defn}[thm]{Definition}
\newtheorem{rmk}[thm]{Remark}
\newtheorem{ex}[thm]{Example}
\newtheorem{question}[thm]{Question}
\DeclareMathOperator{\Ap}{Ap}
\begin{document}

\begin{center}
\vskip 1cm{\LARGE\bf On Free Numerical Semigroups and the Construction of Minimal Telescopic Sequences} \vskip 1cm
\large Caleb M. Shor\\
Department of Mathematics\\
Western New England University\\
Springfield, MA 01119\\
USA\\
\href{mailto:cshor@wne.edu}{\tt cshor@wne.edu}\\
\end{center}
\vskip .2 in
\begin{abstract}
A free numerical semigroup is a submonoid of the non-negative integers with finite complement that is additively generated by the terms in a telescopic sequence with gcd 1. However, such a sequence need not be minimal, which is to say that some proper subsequence may generate the same numerical semigroup, and that subsequence need not be telescopic. In this paper, we will see that for a telescopic sequence with any gcd, there is a minimal telescopic sequence which generates the same submonoid. In particular, given a free numerical semigroup we can construct a telescopic generating sequence which is minimal. In the process, we will examine some operations on and constructions of telescopic sequences in general.
\end{abstract}

\section{Introduction}\label{sec:new-intro}
Let $\mathbb{N}$ denote the set of positive integers and let $\mathbb{N}_0=\mathbb{N}\cup\{0\}$. For a set $A\subseteq\mathbb{N}_0$, let $\langle A\rangle$ denote the set of all (finite) $\mathbb{N}_0$-linear combinations of elements of $A$. Let $S$ be a submonoid of $\mathbb{N}_0$. All submonoids of $\mathbb{N}_0$ are finitely generated, so $S=\langle A\rangle$ for some finite $A=\{a_1,\dots,a_k\}\subset\mathbb{N}_0$. We also write this as $S=\langle a_1,\dots,a_k\rangle$. We say a set $A$ is \emph{minimal} if $\langle A'\rangle\ne\langle A\rangle$ for all $A'\subsetneq A$. It is well known that any set has a unique minimal subset that generates the same submonoid, and hence there exists a bijection between submonoids of $\mathbb{N}_0$ and minimal subsets of $\mathbb{N}_0$. (See Rosales and Garc\'ia-S\'anchez \cite[Cor.\ 2.8]{RosalesGarciaSanchez09} for details.)

In this paper, we are interested in generating sets that are ordered in some way. We therefore consider (finite) sequences $G=(g_1,\dots,g_k)$ for some $k\in\mathbb{N}$ with $g_i\in\mathbb{N}_0$, which we denote by $G\in\mathbb{N}_0^k$.\footnote{By taking $G\in\mathbb{N}_0^k$, we allow for the possibility that $g_i=g_j$ for $i\ne j$. We can also have $g_i=0$ for some $i$. Clearly, if we remove any repeats or 0s, the resulting subsequence will still generate the same submonoid. However, our results hold in the more general case of allowing repeats and/or zeros, so we choose to allow them to give slightly more general results.} We similarly define the submonoid generated by $G$ as $\langle G\rangle = \langle g_1,\dots,g_k\rangle$. As with sets, a sequence $G$ is \emph{minimal} if $\langle G'\rangle\ne\langle G\rangle$ for all proper subsequences $G'$ of $G$. Any sequence then has a unique minimal subsequence that generates the same submonoid. Observe that any permutation of a minimal sequence is also necessarily minimal. 

For $G\in\mathbb{N}_0^k$ a sequence with $g_1+g_2>0$, let $G_i=(g_1,\dots,g_i)$ and $d_i=\gcd(G_i)$ for $1\le i\le k$. Let $c(G)=(c_2,\dots,c_k)$ where $c_j=d_{j-1}/d_j$ for $2\le j\le k$.\footnote{Observe that $c_j$ is a well-defined integer when $d_j\ne 0$. We have $d_j\ne 0$ for all $j$ precisely when $g_1+g_2>0$.} If $c_jg_j\in\langle G_{j-1}\rangle$ for all $j\ge 2$, then we say $G$ is a \emph{telescopic} (or \emph{smooth}) {sequence}. Some authors include the additional properties where $\gcd(G)=1$ and/or $G$ is an increasing sequence. In this paper, we require neither.

The reader can confirm that all sequences $G\in\mathbb{N}_0^k$ with $g_1+g_2>0$ for $k=1$ and $k=2$ are telescopic. There are longer sequences which are not telescopic, though for which a permutation is. One example is $G=(4, 5, 6)$, which is not telescopic, though $G'=(4,6,5)$ is telescopic. And finally, there are sequences for which no permutation is telescopic, such as $G=(3,4,5)$.

Telescopic sequences arise naturally in the world of numerical semigroups. They generate so-called \emph{free numerical semigroups}, which have some nice properties that we will see along with references for further reading in Section \ref{sec:background}. Unfortunately, the usage of the word ``free'' here does not coincide with the categorical idea of free objects.

The motivation for this paper is how the definitions of ``telescopic'' and ``minimal'' interact. Consider the following example.

\begin{ex}\label{ex:question-example}
Let $S=\langle G\rangle$ for $G=(660, 550, 352, 50, 201)\in\mathbb{N}_0^5$. We see that $c(G)=(6, 5, 11, 2)$ and that $c_jg_j\in\langle G_{j-1}\rangle$ for $2\le j\le 5$, so $G$ is a telescopic sequence. Since $\gcd(G)=1$, $S$ is a free numerical semigroup.  
However, $G$ is not minimal because $550=0\cdot660+0\cdot352+11\cdot 50+0\cdot201$. We eliminate 550 from $G$ to obtain the proper subsequence $G'=(660, 352, 50, 201)$ with $\langle G'\rangle=\langle G\rangle=S$. We can see that $G'$ is minimal. However, $G'$ is not telescopic.
\end{ex}
This leads to the following question.
\begin{question}\label{q:1}
Given a telescopic sequence $G$, does there exist a telescopic sequence $G'$ which is minimal and has $\langle G'\rangle=\langle G\rangle$?
\end{question}
We can ask this more generally.
\begin{question}\label{q:2}
Suppose $G$ and $H$ are sequences such that $\langle G\rangle=\langle H\rangle$. If $G$ is telescopic, must some permutation of $H$ be telescopic?
\end{question}

As Example \ref{ex:question-example} illustrates, for $G$ telescopic and $G'$ its unique minimal subsequence, $G'$ need not be telescopic. In this particular case, the permutation $(660, 50, 352, 201)$ of $G'$ is telescopic. We will show that this always happens -- i.e., that the answer to Question \ref{q:1} is ``yes,'' as is the answer to the more general Question \ref{q:2}. In the context of numerical semigroups, this means that any free numerical semigroup is generated by a telescopic minimal sequence, and we will give a procedure to compute it.

\subsection{Organization}
This paper is organized as follows. In Section \ref{sec:background}, we will give some background material on free numerical semigroups. Following that, given a sequence $(c_2,\dots,c_k)$ and some $d\in\mathbb{N}$, we provide an explicit method (Remark \ref{rmk:how-to-construct-telescopic}) in Section \ref{sec:explicit-telescopic-construction} to produce any telescopic sequence $G=(g_1, \dots, g_k)$ such that $c(G)=(c_2, \dots, c_k)$ and $\gcd(G)=d$. In Section \ref{sec:sequence-operations}, we describe two functions which map telescopic sequences to telescopic sequences. As we will see, any function which maps telescopic sequences to telescopic sequences (with the same gcd) must be a composition of these functions (with certain parameters). 

Then, in Section \ref{sec:reduction-telescopic-minimal} we present a construction which takes a telescopic sequence as input and outputs a minimal telescopic sequence that generates the same submonoid, answering Question \ref{q:1} affirmatively. The main result is Theorem \ref{thm:minimal-telescopic-sequence}. As a corollary, we find that every free numerical semigroup is generated by a minimal telescopic sequence. We also answer the more general Question \ref{q:2} affirmatively.

Finally, in Section \ref{sec:explicit-minimal-telescopic-construction} we combine two results (Remark \ref{rmk:how-to-construct-telescopic} and Proposition \ref{prop:two-cases}) to give an explicit method to construct any minimal telescopic sequence $G$ with $c(G)=(c_2, \dots, c_k)$. As an application, we have Corollary \ref{cor:increasing-telescopic-minimal} which says that any non-decreasing telescopic sequence $G=(g_1, \dots, g_k)$ with $c_j>1$ for all $j$ is necessarily minimal.

\section{Background}\label{sec:background}
Free numerical semigroups, which are generated by telescopic sequences $G$ with $\gcd(G)=1$, have been studied in various contexts by Brauer and Shockley \cite{Brauer1962}, Herzog \cite{Herzog1970}, Bertin and Carbonne \cite{BertinCarbonne1977}, R\"odseth \cite{Rodseth1978}, Kirfel and Pellikaan \cite{KirfelPellikaan1995}, Rosales and Garc\'ia-S\'anchez \cite{Rosales1999,RosalesGarciaSanchez09}, Leher \cite{Leher2007}, Ayano \cite{Ayano2014}, Robles-P\'erez and Rosales \cite{Robles-PerezRosales2018}, Gassert and Shor \cite{GassertShor18}, and others. We will highlight some of their properties.
 
\subsection{Numerical semigroups}
We begin with numerical semigroups. For a comprehensive reference on the subject, see the work of Rosales and Garc\'ia-S\'anchez \cite{RosalesGarciaSanchez09}.

A \emph{numerical semigroup} $S$ is a submonoid of $\mathbb{N}_0$ with finite complement.  It is well-known that every numerical semigroup $S$ is given by $S=\langle A\rangle$ for some finite $A\subset\mathbb{N}_0$ with $\gcd(A)=1$. Since we are interested in generating sequences, we equivalently have that every numerical semigroup $S$ is given by $S=\langle G\rangle$ for some $k\in\mathbb{N}$ and $G\in\mathbb{N}_0^k$ with $\gcd(G)=1$. Elements of the complement of $S$ are known as \emph{gaps} of $S$, and we denote the set of gaps by $H(S)$. The \emph{genus} of $S$, denoted $g(S)$, is the number of gaps of $S$. The \emph{Frobenius element} of $S$, denoted $F(S)$, is the largest integer not in $S$, which is $\max(H(S))$ when $S\ne \mathbb{N}_0$. The \emph{embedding dimension} of $S$, denoted $e(S)$, is the cardinality of the unique minimal generating set of $S$ (which is the cardinality of any minimal generating sequence of $S$).

Given a numerical semigroup $S$, it can be difficult to compute $g(S)$, $F(S)$, and other properties of the set of gaps. Curtis \cite{Curtis1990} showed that there cannot be a polynomial formula to compute the Frobenius number of $S$ as a function of the generating elements of $S$ when $e(S)>2$ . Ram\'irez Alfons\'in \cite{RamirezAlfonsin96} proved that the problem of computing the Frobenius number of $S$, when $e(S)>2$, is NP-hard. For more on the Frobenius problem, see Ram\'irez Alfons\'in's book \cite{RamirezAlfonsin05}.

A very helpful tool in understanding the gaps of a numerical semigroup is called the Ap\'ery set \cite{Apery1946}. For any nonzero $t\in S$, the \emph{Ap\'ery set of $S$ relative to $t$} is 
\begin{equation}\label{eqn:apery-defn}
\Ap(S;t) = \{s\in S : s-t\not\in S \}.
\end{equation}  
Equivalently, $\Ap(S;t)$ is the set of elements in $S$ which are minimal in their congruence class modulo $t$.

If we know $\Ap(S;t)$, then we immediately know the genus of $S$ (from Selmer \cite{Selmer1977}) and the Frobenius number of $S$ (from Brauer and Shockley \cite{Brauer1962}).

\begin{thm}[{\cite[Lem.\ 3]{Brauer1962}}, {\cite[Eqn.\ 2.3]{Selmer1977}}]\label{thm:brauer-and-selmer}
For $S$ a numerical semigroup and any nonzero $t\in S$,
\begin{equation}\label{eqn:frob-formula}
F(S)=\max(\Ap(S;t))-t,
\end{equation}
and
\begin{equation}\label{eqn:genus-formula}
g(S)=\frac{1-t}{2}+\frac{1}{t}\sum\limits_{n\in\Ap(S;t)}n.
\end{equation}
\end{thm}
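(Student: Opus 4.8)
The plan is to organize both computations around the decomposition of $S$ by residue class modulo $t$. The key structural fact I would establish first is that, because $t\in S$ and $S$ has finite complement, every congruence class modulo $t$ contains elements of $S$, and within each such class the set $\{s\in S : s\equiv r\pmod t\}$ has a unique minimal element, which I would call $w_r$ for $0\le r<t$. Using the characterization of $\Ap(S;t)$ as the set of elements of $S$ that are minimal in their class modulo $t$, this shows $\Ap(S;t)=\{w_0,\dots,w_{t-1}\}$ consists of exactly $t$ elements, one per class, with $w_0=0$.

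Next I would prove the threshold property: for an integer $n$ with $n\equiv r\pmod t$, we have $n\in S$ if and only if $n\ge w_r$. The forward direction is just minimality of $w_r$; the reverse follows because such an $n$ equals $w_r+mt$ with $m\ge 0$, and $w_r,t\in S$ then force $n\in S$. Consequently, the gaps of $S$ lying in class $r$ are exactly the nonnegative integers $r, r+t, \dots, w_r-t$, of which there are $(w_r-r)/t$.

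For the genus formula, I would sum the per-class gap counts:
\[
g(S)=\sum_{r=0}^{t-1}\frac{w_r-r}{t}=\frac{1}{t}\sum_{r=0}^{t-1}w_r-\frac{1}{t}\cdot\frac{t(t-1)}{2}=\frac{1}{t}\sum_{n\in\Ap(S;t)}n-\frac{t-1}{2},
\]
which is exactly \eqref{eqn:genus-formula}. For the Frobenius formula, I would observe that the largest gap in class $r$ is $w_r-t$, so the overall largest gap is $\max_r w_r - t = \max(\Ap(S;t))-t$, giving \eqref{eqn:frob-formula}.

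I do not expect a genuinely hard step here; all of the content lies in the residue-class decomposition and the threshold property, after which both formulas are bookkeeping. The one point requiring care is the behavior at the boundary class $r=0$ (where $w_0=0$): I must confirm that it contributes no gaps and no spurious Frobenius candidate, and that each count $(w_r-r)/t$ is a nonnegative integer, so that the summation and maximization go through without off-by-one errors.
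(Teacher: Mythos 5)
Your proof is correct. Note, however, that the paper itself offers no proof of this theorem to compare against: it is quoted as background, with citations to Brauer--Shockley and Selmer, and the only derivation the paper hints at is that the genus formula \eqref{eqn:genus-formula} falls out of the (also uncited-in-proof) Ap\'ery-set identity \eqref{eqn:tuenter-apery} by taking $f(n)=n$. Your residue-class argument is the standard direct proof and is complete: the decomposition of $S$ into classes modulo $t$, the identification of $\Ap(S;t)$ with the $t$ per-class minima $w_r$ (which the paper's own remark after \eqref{eqn:apery-defn} licenses), and the threshold property $n\in S \iff n\ge w_r$ together reduce both formulas to the bookkeeping you describe; the count $(w_r-r)/t$ is indeed a nonnegative integer since $w_r\equiv r \pmod t$ and $w_r\ge r$. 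The one point worth writing out explicitly is the maximization step for \eqref{eqn:frob-formula}: classes with no gaps (those with $w_r=r<t$) contribute the value $w_r-t<0$, so they cannot realize the maximum when $S\ne\mathbb{N}_0$, and in the degenerate case $S=\mathbb{N}_0$ the formula still holds since $\max(\Ap(S;t))-t=(t-1)-t=-1=F(S)$ under the paper's convention. Your approach buys self-containedness at essentially no cost, whereas the paper's citation-only treatment keeps the focus on its actual subject, telescopic sequences.
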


We can also deduce other properties of the gaps of $S$ with the following identity and an appropriately chosen function $f$.

\begin{thm}[{\cite[Thm.\ 2.3]{GassertShor18}}]\label{thm:tuenter-apery}
For $S$ a numerical semigroup with set of gaps $H(S)$, any nonzero $t\in S$, and any function $f$ defined on $\mathbb{N}_0$,
\begin{equation}\label{eqn:tuenter-apery}
\sum\limits_{n\in H(S)}[f(n+t)-f(n)]=\sum\limits_{n\in\Ap(S;t)}f(n) - \sum\limits_{n=0}^{t-1}f(n).
\end{equation}
\end{thm}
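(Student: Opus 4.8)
The plan is to exploit the description of $\Ap(S;t)$ as the set of elements of $S$ that are minimal in their congruence class modulo $t$, and to evaluate the left-hand side by telescoping within each residue class. First I would fix a residue $r\in\{0,1,\dots,t-1\}$ and let $w_r$ denote the unique element of $\Ap(S;t)$ with $w_r\equiv r\pmod t$; write $w_r=r+m_rt$ with $m_r\in\mathbb{N}_0$. Because $w_r$ is minimal in its class and $S$ is closed under adding $t$, the integers congruent to $r$ split cleanly: the gaps in this class are exactly $r,r+t,\dots,r+(m_r-1)t$, while every integer congruent to $r$ that is at least $w_r$ lies in $S$.

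Next I would restrict the left-hand sum to this single residue class. The gaps in the class contribute
\begin{equation*}
\sum_{j=0}^{m_r-1}\bigl[f(r+(j+1)t)-f(r+jt)\bigr],
\end{equation*}
a telescoping sum that collapses to $f(r+m_rt)-f(r)=f(w_r)-f(r)$. When $m_r=0$ the class contributes no gaps, and the difference $f(w_r)-f(r)$ is likewise zero, so the contribution is consistent in that degenerate case as well.

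Finally I would sum over all $t$ residue classes. Since the gaps of $S$ are partitioned by their residue modulo $t$, the full left-hand side becomes $\sum_{r=0}^{t-1}[f(w_r)-f(r)]$. The assignment $r\mapsto w_r$ is a bijection from $\{0,\dots,t-1\}$ onto $\Ap(S;t)$, so $\sum_{r=0}^{t-1}f(w_r)=\sum_{n\in\Ap(S;t)}f(n)$, while $\sum_{r=0}^{t-1}f(r)=\sum_{n=0}^{t-1}f(n)$, which together yield the claimed identity. I do not anticipate a serious obstacle; the one point requiring care is the bookkeeping that justifies splitting each residue class into a finite initial run of gaps followed entirely by semigroup elements, but this is exactly the defining property of the Apéry set, so the argument reduces to organizing the telescoping correctly.
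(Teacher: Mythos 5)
Your proof is correct. Note that the paper does not actually prove this statement---it is quoted from the cited work of Gassert and Shor---so there is no internal proof to compare against; your argument is the standard one (and essentially the one in that reference): each residue class modulo $t$ contains a unique element $w_r$ of $\Ap(S;t)$, the gaps in that class are precisely $r, r+t, \dots, w_r-t$, the sum over those gaps telescopes to $f(w_r)-f(r)$, and summing over $r=0,\dots,t-1$ while using the bijection $r\mapsto w_r$ yields the identity. Your handling of the degenerate case $m_r=0$ and the justification that each class splits into a finite initial run of gaps followed entirely by elements of $S$ (using $t\in S$ and closure under addition) are exactly the points that need care, and you addressed both.
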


For instance, with $f(n)=n$ in Equation \eqref{eqn:tuenter-apery}, we obtain the genus formula in Equation \eqref{eqn:genus-formula}. Tuenter \cite{Tuenter06} presents additional applications in the case where $S=\langle a,b\rangle$.

\subsection{Free numerical semigroups}
A numerical semigroup $S$ is a \emph{free numerical semigroup} if $S=\langle G\rangle$ for a telescopic sequence $G$. The following result from Bertin and Carbonne \cite[Sect.\ 2]{BertinCarbonne1977} motivates our interest in free numerical semigroups. We present it for any value of $\gcd(G)$, whereas it usually appears in the case of $\gcd(G)=1$.
\begin{thm}
Let $G=(g_1, \dots, g_k)\in\mathbb{N}_0^k$ be telescopic, let $c(G)=(c_2, \dots, c_k)$, and let $d=\gcd(G)$. For any $n\in d\mathbb{Z}$, there is a unique representation 
\begin{equation}
n=\sum\limits_{i=1}^k n_i g_i,
\end{equation}
with integers $n_1, \dots, n_k$ where $0\le n_j<c_j$ for $j = 2, \dots, k$. Furthermore, for such an $n\in d\mathbb{Z}$, we have $n\in\langle G\rangle$ if and only if $n_1 \ge 0$.
\end{thm}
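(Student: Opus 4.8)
The plan is to record a few structural facts, then establish uniqueness and existence of the representation together by induction on $k$, and finally treat the membership criterion separately using the telescopic hypothesis directly. Throughout I would use that $d_j\mid d_{j-1}$ (since $G_{j-1}$ is a subsequence of $G_j$), so each $c_j=d_{j-1}/d_j$ is a positive integer, together with the identity $\gcd(g_j,d_{j-1})=\gcd(g_j,\gcd(G_{j-1}))=\gcd(G_j)=d_j$. This last identity is the engine of the argument: it says that in the cyclic group $\mathbb{Z}/d_{j-1}\mathbb{Z}$ the element $g_j$ has order $d_{j-1}/\gcd(g_j,d_{j-1})=c_j$, and since the subgroup of multiples of $d_j$ in $\mathbb{Z}/d_{j-1}\mathbb{Z}$ also has order $c_j$, the classes $0,g_j,2g_j,\dots,(c_j-1)g_j$ are exactly the $c_j$ distinct multiples of $d_j$ modulo $d_{j-1}$.

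For existence and uniqueness I would induct on $k$, the base case $k=1$ being immediate since $d=d_1=g_1$ and $n=n_1g_1$ determines $n_1$. For the inductive step, observe that $G_{k-1}=(g_1,\dots,g_{k-1})$ is again telescopic with $c(G_{k-1})=(c_2,\dots,c_{k-1})$ and $\gcd(G_{k-1})=d_{k-1}=c_kd$. Given $n\in d\mathbb{Z}$, the observation above (applied with $n$ a multiple of $d=d_k$) produces a \emph{unique} $n_k\in\{0,\dots,c_k-1\}$ with $n_kg_k\equiv n\pmod{d_{k-1}}$; then $n-n_kg_k\in d_{k-1}\mathbb{Z}$, and the induction hypothesis supplies a unique representation $n-n_kg_k=\sum_{i=1}^{k-1}n_ig_i$ with $0\le n_j<c_j$ for $2\le j\le k-1$. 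Uniqueness of the whole representation follows because any valid representation of $n$ must, upon reduction modulo $d_{k-1}$ (which kills $g_1,\dots,g_{k-1}$), force the same $n_k$, after which the remaining coefficients are determined by the inductive uniqueness; finally $n_1$ is pinned down by $n_1g_1=n-\sum_{i\ge 2}n_ig_i$ since $g_1\ne 0$.

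For the membership criterion, the direction $n_1\ge 0\Rightarrow n\in\langle G\rangle$ is trivial, as then all coefficients $n_i$ are nonnegative. For the converse I would start from an arbitrary nonnegative representation $n=\sum_{i=1}^k a_ig_i$ (with every $a_i\ge 0$) and reduce it to the canonical one. The telescopic hypothesis gives, for each $j\ge 2$, an identity $c_jg_j=\sum_{i<j}b_{j,i}g_i$ with all $b_{j,i}\ge 0$, and I will use these as rewriting rules. Processing the indices from $j=k$ down to $j=2$, I repeatedly replace $a_j$ by $a_j-c_j$ whenever $a_j\ge c_j$, simultaneously adding the nonnegative amounts $b_{j,i}$ to the lower coefficients $a_i$. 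Each such step preserves both the represented value $n$ and the nonnegativity of all coefficients, and since reducing index $j$ only alters coefficients of index strictly less than $j$, no already-processed (higher) index is ever disturbed. Hence a single top-down pass terminates with $0\le a_j<c_j$ for all $j\ge 2$ and $a_1\ge 0$ (as $a_1$ starts nonnegative and only increases). By the uniqueness just proved, this reduced representation is the canonical one, so $n_1=a_1\ge 0$.

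The routine parts are the divisibility bookkeeping and the base case; the one step that needs genuine care is the termination and correctness of the reduction sweep in the last paragraph, i.e., verifying that the top-down ordering really does leave each higher-index coefficient in range once it has been reduced. This is precisely where the telescopic structure is essential, since each relation $c_jg_j\in\langle G_{j-1}\rangle$ involves strictly smaller indices only, and it is the crux on which the ``only if'' direction rests.
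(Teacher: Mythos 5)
Your proof is correct and complete. For comparison: the paper does not actually prove this statement at all --- it quotes it as background from Bertin and Carbonne --- so the relevant benchmark is the classical argument, which your write-up essentially reconstructs in a self-contained way. The engine is exactly the right one: from $\gcd(g_j,d_{j-1})=d_j$ you get that $g_j$ has additive order $c_j$ in $\mathbb{Z}/d_{j-1}\mathbb{Z}$, hence $0,g_j,\dots,(c_j-1)g_j$ exhaust the multiples of $d_j$ modulo $d_{j-1}$; this gives both existence and uniqueness of $n_k$ in the inductive step, and your appeal to $G_{k-1}$ being telescopic with $\gcd(G_{k-1})=c_kd$ is precisely the paper's Lemma \ref{lem:shortened-telescopic}. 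The part you rightly flag as the crux --- termination and correctness of the top-down rewriting sweep for the ``only if'' direction --- checks out: each relation $c_jg_j\in\langle G_{j-1}\rangle$ pushes mass only to strictly smaller indices, so a coefficient that has been reduced below $c_j$ is never disturbed again, $a_1$ starts nonnegative and only increases, and uniqueness then identifies the terminal output with the canonical representation, forcing $n_1=a_1\ge0$. The one hypothesis you use silently beyond the theorem's literal statement is $g_1>0$ (to pin down $n_1$ in the base case and to guarantee $c_j\ge1$ so the sweep terminates); this is genuinely needed --- under the paper's bare convention $g_1+g_2>0$, the sequence $(0,g_2)$ is telescopic with $c_2=0$ and the statement as written fails for it --- but it coincides with the paper's standing assumption in Remark \ref{rmk:first-term-nonzero}, so it is a fair hypothesis to adopt rather than a gap in your argument.
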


This theorem is a generalization of the result that, given relatively prime positive integers $a$ and $b$, any integer $n$ can be uniquely written as $n=n_1a+n_2b$ with $n_1,n_2\in\mathbb{Z}$ and $0\le n_2<a$.

We then obtain an explicit description of the Ap\'ery set of a free numerical semigroup relative to the first generating element.

\begin{cor}[{\cite[Prop.\ 3.6]{GassertShor18}}]\label{cor:explicit-apery}
Suppose $S$ is a free numerical semigroup, so $S=\langle G\rangle$ for $G=(g_1, \dots, g_k)\in\mathbb{N}_0^k$ a telescopic sequence with $\gcd(G)=1$. For $c(G)=(c_2, \dots, c_k)$, we have 
\begin{equation}
\Ap(S;g_1)=\left\{\sum\limits_{j=2}^k n_j g_j : 0\le n_j < c_j \right\}.
\end{equation}
\end{cor}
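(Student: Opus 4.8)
The plan is to derive the corollary directly from the unique-representation theorem of Bertin and Carbonne stated just above, specialized to the case $d=\gcd(G)=1$, together with the characterization of the Ap\'ery set recorded after Equation \eqref{eqn:apery-defn}: an element $s$ lies in $\Ap(S;g_1)$ precisely when $s\in S$ and $s-g_1\notin S$. When $d=1$, the Bertin--Carbonne theorem asserts that every integer $n$ has a unique expression $n=\sum_{i=1}^k n_i g_i$ with $0\le n_j<c_j$ for $j\ge 2$, and that $n\in S$ if and only if the unconstrained leading coefficient satisfies $n_1\ge 0$. The single observation driving both inclusions is that adding or subtracting $g_1$ changes only the coefficient $n_1$ and leaves the constrained coefficients $n_2,\dots,n_k$ untouched, so the sign of $n_1$ alone governs membership in $S$.

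For the inclusion $\supseteq$, I would take any $w=\sum_{j=2}^k n_j g_j$ with $0\le n_j<c_j$. This is already in Bertin--Carbonne form with $n_1=0$, so $w\in S$. Subtracting $g_1$ gives $w-g_1=(-1)g_1+\sum_{j=2}^k n_j g_j$, which is again in the required form (the coefficients $n_2,\dots,n_k$ are unchanged and hence still in range) and is therefore, by uniqueness, \emph{the} representation of $w-g_1$. Its leading coefficient is $-1<0$, so $w-g_1\notin S$, and thus $w\in\Ap(S;g_1)$.

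For the reverse inclusion $\subseteq$, I would start from an arbitrary $s\in\Ap(S;g_1)$ and write its unique representation $s=\sum_{i=1}^k n_i g_i$ with $0\le n_j<c_j$ for $j\ge 2$. Since $s\in S$, the sign criterion forces $n_1\ge 0$. If $n_1\ge 1$, then $s-g_1=(n_1-1)g_1+\sum_{j=2}^k n_j g_j$ is a representation in the required form with nonnegative leading coefficient, whence $s-g_1\in S$; this contradicts $s\in\Ap(S;g_1)$. Hence $n_1=0$ and $s=\sum_{j=2}^k n_j g_j$ with $0\le n_j<c_j$, exactly of the asserted form. Combining the two inclusions yields the claimed description.

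I expect no serious obstacle: the argument is careful bookkeeping of which coefficient moves under $\pm g_1$, and the only point requiring vigilance is invoking uniqueness of the Bertin--Carbonne representation so that the sign condition on $n_1$ transfers cleanly between $s$ and $s\pm g_1$. As a sanity check one may note that uniqueness also places the listed elements in distinct residue classes modulo $g_1$, and since $\prod_{j=2}^k c_j=d_1/d_k=g_1$, there are exactly $g_1$ of them, matching the number of residue classes; this count is not needed for the set equality but confirms it is consistent.
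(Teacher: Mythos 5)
Your proof is correct and follows exactly the route the paper intends: the corollary is stated as an immediate consequence (cited to Gassert--Shor) of the Bertin--Carbonne unique-representation theorem given just before it, and your two inclusions---observing that adding or subtracting $g_1$ alters only the unconstrained coefficient $n_1$, whose sign alone governs membership in $S$---are precisely the bookkeeping that derivation requires. The uniqueness of the representation is invoked at the right moments, so there is no gap.
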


Using Corollary \ref{cor:explicit-apery} with Theorem \ref{thm:brauer-and-selmer} we obtain the formulas for the Frobenius number and genus of a free numerical semigroup. The formula for $F(S)$ in this case was known to Brauer \cite{Brauer1942}.

\begin{cor}
For $S$ a free numerical semigroup generated by $G=(g_1, \dots, g_k)\in\mathbb{N}_0^k$, a telescopic sequence with $\gcd(G)=1$ and $c(G)=(c_2, \dots, c_k)$,
\begin{equation}\label{eqn:frobenius-free}
F(S) = -g_1+\sum\limits_{j=2}^k (c_j-1)g_j,
\end{equation}
and
\begin{equation}\label{eqn:genus-free}
g(S) = \frac{1}{2}\left(1-g_1+\sum\limits_{j=2}^k (c_j-1)g_j\right).
\end{equation}
\end{cor}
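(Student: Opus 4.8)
The plan is to apply Theorem \ref{thm:brauer-and-selmer} with the choice $t = g_1$, feeding in the explicit description of $\Ap(S;g_1)$ provided by Corollary \ref{cor:explicit-apery}. Both formulas then reduce to evaluating, respectively, the maximum element and the sum of the elements of this Apéry set, each indexed by tuples $(n_2,\dots,n_k)$ with $0\le n_j<c_j$. Before either computation, I would record that the index set has the right size: since $c_j=d_{j-1}/d_j$ telescopes, $\prod_{j=2}^k c_j = d_1/d_k = g_1/\gcd(G) = g_1$, which equals $|\Ap(S;g_1)|$. Combined with the set equality in Corollary \ref{cor:explicit-apery}, this shows the assignment $(n_2,\dots,n_k)\mapsto\sum_{j=2}^k n_j g_j$ is a bijection onto $\Ap(S;g_1)$, so maxima and sums over the Apéry set may be computed over the index tuples without any risk of repetition.

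For the Frobenius number, because each $g_j\ge 0$ the quantity $\sum_{j=2}^k n_j g_j$ is maximized by maximizing each coordinate independently, i.e.\ by taking $n_j=c_j-1$ for every $j$. Hence $\max(\Ap(S;g_1))=\sum_{j=2}^k (c_j-1)g_j$, and subtracting $t=g_1$ as in \eqref{eqn:frob-formula} yields \eqref{eqn:frobenius-free} immediately.

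For the genus, I would interchange the order of summation to write $\sum_{n\in\Ap(S;g_1)} n = \sum_{j=2}^k g_j \sum n_j$, where the inner sum runs over all admissible tuples. Fixing $j$, the factor $\sum_{n_j=0}^{c_j-1} n_j = \binom{c_j}{2} = (c_j-1)c_j/2$ is multiplied by the number $\prod_{i\ne j} c_i = g_1/c_j$ of free choices for the remaining coordinates, and the $c_j$ cancels to give $(c_j-1)g_1/2$. Thus $\sum_{n\in\Ap(S;g_1)} n = \tfrac{g_1}{2}\sum_{j=2}^k (c_j-1)g_j$; substituting into \eqref{eqn:genus-formula} with $t=g_1$, the prefactor $1/g_1$ cancels and the result is \eqref{eqn:genus-free}.

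I expect the computation to be entirely routine once the Apéry set is in hand; there is no genuine obstacle. The only point deserving care is the bijection between index tuples and Apéry-set elements, which is what legitimizes computing the maximum and the sum over tuples rather than over $\Ap(S;g_1)$ itself — and this is exactly what the cardinality identity $\prod_{j=2}^k c_j = g_1$ secures.
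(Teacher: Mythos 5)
Your proposal is correct and follows exactly the route the paper indicates: the paper states this corollary without a separate proof, deriving it by combining Corollary \ref{cor:explicit-apery} with Theorem \ref{thm:brauer-and-selmer} at $t=g_1$, which is precisely your argument. The details you supply --- the bijection between index tuples and Ap\'ery-set elements secured by $\prod_{j=2}^k c_j = g_1$, the coordinate-wise maximization for $F(S)$, and the interchange of summation for $g(S)$ --- are the routine computations the paper leaves implicit.
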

Observe that for $S$ a free numerical semigroup we have $F(S)=2g(S)-1$. Numerical semigroups with this property are \emph{symmetric}. These semigroups are equivalently characterized as those for which the map $\phi:S\to\mathbb{Z}\setminus S$ given by $\phi(s)=F(S)-s$ is a bijection.

We can combine Corollary \ref{cor:explicit-apery} with Theorem \ref{thm:tuenter-apery} to obtain an explicit identity for the gaps of a free numerical semigroup.

\begin{cor}[{\cite[Cor.\ 3.7]{GassertShor18}}]\label{cor:tuenter-telescopic-id}
Suppose $S=\langle G\rangle$ for $G=(g_1, \dots, g_k)$ telescopic and $c(G)=(c_2, \dots, c_k)$. For $H(S)$ the set of gaps of $S$, and for any function $f$ defined on $\mathbb{N}_0$,
\begin{equation}\label{eqn:explicit-free}
\sum\limits_{n\in H(S)}[f(n+g_1)-f(n)] = 
\sum\limits_{n_2=0}^{c_2-1}
\dots \sum\limits_{n_k=0}^{c_k-1} f\left(\sum\limits_{j=2}^k n_jg_j\right)  - \sum\limits_{n=0}^{g_1-1}f(n).
\end{equation}
\end{cor}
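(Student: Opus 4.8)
The plan is to derive this identity directly by specializing the general Apéry-set formula of Theorem~\ref{thm:tuenter-apery} to the parameter $t=g_1$ and then substituting the explicit description of the Apéry set provided by Corollary~\ref{cor:explicit-apery}. Since the set of gaps $H(S)$ is finite, $S$ is a numerical semigroup, so $\gcd(G)=1$ and $g_1$ is a nonzero element of $S$; this is exactly the hypothesis needed for both of those earlier results to apply.

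First I would apply Theorem~\ref{thm:tuenter-apery} with $t=g_1$, which immediately gives
\begin{equation*}
\sum_{n\in H(S)}[f(n+g_1)-f(n)] = \sum_{n\in\Ap(S;g_1)} f(n) - \sum_{n=0}^{g_1-1} f(n).
\end{equation*}
Comparing this with the asserted identity \eqref{eqn:explicit-free}, the subtracted terms $\sum_{n=0}^{g_1-1} f(n)$ already agree, so the entire task reduces to showing that the single sum $\sum_{n\in\Ap(S;g_1)} f(n)$ equals the nested sum $\sum_{n_2=0}^{c_2-1}\cdots\sum_{n_k=0}^{c_k-1} f\bigl(\sum_{j=2}^k n_j g_j\bigr)$. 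By Corollary~\ref{cor:explicit-apery}, the Apéry set is precisely the set of values $\sum_{j=2}^k n_j g_j$ as each $n_j$ ranges over $0\le n_j<c_j$, so the two expressions evaluate $f$ at the same collection of integers.

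The one point that genuinely requires care — and the only step beyond bookkeeping — is matching multiplicities: the nested sum on the right runs over each tuple $(n_2,\dots,n_k)$ with $0\le n_j<c_j$ exactly once, whereas the sum over $\Ap(S;g_1)$ runs over each element of that set exactly once. These coincide precisely because the map $(n_2,\dots,n_k)\mapsto\sum_{j=2}^k n_j g_j$ is injective on the index range, so that distinct tuples produce distinct elements of the Apéry set. This injectivity is exactly the uniqueness clause of the representation theorem of Bertin and Carbonne recalled in Section~\ref{sec:background}: with $d=\gcd(G)=1$, every integer has a unique expression $\sum_i n_i g_i$ subject to $0\le n_j<c_j$, so no element of $\Ap(S;g_1)$ arises from two distinct tuples. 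Hence the nested sum and the sum over the set are identical term for term, and substituting back into the displayed Apéry-set formula yields \eqref{eqn:explicit-free}, completing the argument.
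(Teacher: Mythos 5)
Your proof is correct and follows exactly the paper's intended route: the paper derives Corollary~\ref{cor:tuenter-telescopic-id} precisely by combining Theorem~\ref{thm:tuenter-apery} with $t=g_1$ and the explicit Ap\'ery set description of Corollary~\ref{cor:explicit-apery}. Your additional care in verifying that the map $(n_2,\dots,n_k)\mapsto\sum_{j=2}^k n_jg_j$ is injective (via the Bertin--Carbonne uniqueness statement), so that the nested sum counts each Ap\'ery element exactly once, is a detail the paper leaves implicit but is entirely consistent with its argument.
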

As before, with the function $f(n)=n$ in Equation \eqref{eqn:explicit-free} we recover the genus formula given in Equation \eqref{eqn:genus-free}.

\subsection{Prior results}
The motivating question for this paper is whether a free numerical semigroup has a minimal telescopic generating sequence. 
We can answer the question immediately for embedding dimension at most 3.

If $e(S)=1$, then $S=\mathbb{N}_0=\langle G\rangle$ for $G=(1)$, which is a minimal telescopic sequence.

If $e(S)=2$, then $S=\langle G\rangle$ for $G=(a,b)$ with $a,b>1$ and $\gcd(a,b)=1$. The sequence $G=(a,b)$ is telescopic, so $S$ is generated by a minimal telescopic sequence.

If $e(S)=3$, then we need a result of Herzog \cite{Herzog1970}. (We quote the result as written by Rosales and Garc\'ia-S\'anchez \cite{RosalesGarciaSanchez09}.)
\begin{thm}[{\cite[Cor.\ 9.5]{RosalesGarciaSanchez09}}]
For $S$ a numerical semigroup with $e(S)= 3$, $S$ is symmetric if and only if $S=\langle G\rangle$, for $G=(am_1, am_2, bm_1+cm_2)$ where $a, b, c, m_1, m_2\in\mathbb{N}$ with $\gcd(m_1, m_2)=1$, $a, m_1, m_2\ge2$, $b+c\ge2$, and $\gcd(a, bm_1+cm_2)=1$.
\end{thm}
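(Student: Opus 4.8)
My plan is to prove the equivalence in two directions, and the organizing observation is that for embedding dimension $3$ the symmetric numerical semigroups are exactly the free ones; once that is in hand, the explicit parametrization is essentially a restatement of the telescopic structure together with the minimality conditions.

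For the ``if'' direction, write $n_1=am_1$, $n_2=am_2$, and $n_3=bm_1+cm_2$. I would first check that the ordered sequence $G=(n_1,n_2,n_3)$ is telescopic: one has $d_1=am_1$, $d_2=\gcd(am_1,am_2)=a$, and $d_3=\gcd(a,n_3)=1$, so $c(G)=(m_1,a)$, and the two required memberships $m_1 n_2=m_2 n_1\in\langle n_1\rangle$ and $a\,n_3=b\,n_1+c\,n_2\in\langle n_1,n_2\rangle$ hold by construction. Since $\gcd(G)=1$, $S$ is free, hence symmetric by the observation following Equation \eqref{eqn:genus-free}. There remains the claim $e(S)=3$, i.e.\ that $n_1,n_2,n_3$ minimally generate $S$; here the hypotheses $a,m_1,m_2\ge2$ and $b+c\ge2$ are precisely what rule out each generator lying in the submonoid generated by the other two, which I would verify using the uniqueness of the canonical representation from the theorem of Bertin and Carbonne (for instance, the canonical representation of $n_3$ has $n_3$-coordinate $1<c_3=a$, so $n_3\notin\langle n_1,n_2\rangle$, with the analogous checks for $n_1$ and $n_2$).

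For the ``only if'' direction, the main work is to show that a symmetric $S$ with $e(S)=3$ is free, i.e.\ that some ordering of its three minimal generators is telescopic. I would analyze the Ap\'ery set $\Ap(S;n_1)$ relative to one generator. Its elements are the $\preceq$-minimal representatives in each residue class modulo $n_1$ (where $x\preceq y$ means $y-x\in S$), and for a $3$-generated semigroup they form a ``staircase'' region $\{\alpha n_2+\beta n_3:(\alpha,\beta)\in\Lambda\}$ for a downward-closed $\Lambda\subseteq\mathbb{N}_0^2$. Symmetry of $S$ is equivalent to $\Ap(S;n_1)$ having a single $\preceq$-maximal element, which forces $\Lambda$ to have a single corner, i.e.\ to be a rectangle $[0,c_2)\times[0,c_3)$. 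A rectangular Ap\'ery set is exactly the free/telescopic shape of Corollary \ref{cor:explicit-apery}, so $S$ is free. Once $S$ is known to be free, I would fix a telescopic ordering $(g_1,g_2,g_3)$; setting $a=\gcd(g_1,g_2)$, $m_1=g_1/a$, $m_2=g_2/a$ gives $\gcd(m_1,m_2)=1$ and $c(G)=(m_1,a)$, and the telescopic relation $a\,g_3=b\,g_1+c\,g_2$ divides by $a$ to yield $g_3=bm_1+cm_2$. The inequalities on $a,m_1,m_2,b,c$ and the coprimality $\gcd(a,n_3)=1$ then follow from minimality of the generators together with $\gcd(G)=1$.

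The main obstacle is the structural claim that symmetry collapses the staircase $\Lambda$ to a rectangle; equivalently, that a symmetric $3$-generated semigroup has a single pseudo-Frobenius number and hence a degenerate (two-term) defining relation between two of its generators. This is the real content of Herzog's theorem, and I would establish it through the theory of pseudo-Frobenius numbers and type, using that a numerical semigroup is symmetric exactly when its type is $1$ and that for $e(S)=3$ the type is at most $2$. A secondary delicate point is arranging the final parametrization so that $b$ and $c$ are both positive: the same $S$ may admit several telescopic orderings (two of its generator pairs may share a nontrivial common factor), and only a suitable choice makes $n_3$ a representation over $\{m_1,m_2\}$ with strictly positive coefficients, so I would select the ordering in which the two generators carrying the larger common factor appear first.
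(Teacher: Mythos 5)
First, note that the paper does not prove this theorem at all: it is quoted as a known result (Herzog's theorem, via Rosales and Garc\'ia-S\'anchez), so your attempt has to stand on its own. Your ``if'' direction is essentially sound: $(am_1,am_2,bm_1+cm_2)$ is telescopic with $c(G)=(m_1,a)$, freeness gives symmetry, and the Bertin--Carbonne unique representation gives minimality. The genuine gap is in the ``only if'' direction, at the step ``symmetry forces $\Lambda$ to have a single corner, i.e.\ to be a rectangle.'' That implication is false for an arbitrary choice of distinguished generator $n_1$, because the map $(\alpha,\beta)\mapsto\alpha n_2+\beta n_3$ need not be injective on $\Lambda$, so several corners of $\Lambda$ can represent one and the same Ap\'ery element. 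Concretely, take $S=\langle 4,5,6\rangle$ (free and symmetric, as the paper itself notes) with $n_1=5$, $n_2=4$, $n_3=6$. Then $\Ap(S;5)=\{0,4,8,12,6\}$ and $\Lambda=\{(0,0),(1,0),(2,0),(3,0),(0,1),(0,2)\}$, an L-shape with two corners $(3,0)$ and $(0,2)$, both mapping to $12$ because $3\cdot 4=2\cdot 6$; yet $12$ is the unique $\preceq$-maximal element of the Ap\'ery set, consistent with type $1$. So ``type $1$'' alone does not produce a rectangle; one must additionally show that \emph{some} generator can be chosen as $n_1$ for which the picture is rectangular (here $n_1=4$ or $n_1=6$ works and $n_1=5$ does not), and proving that such a choice exists is essentially the entire content of Herzog's theorem. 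Your proposed fallback---symmetric $\Leftrightarrow$ type $1$, and type $\le 2$ for $e(S)=3$---only re-derives the ``single maximal element'' statement and does nothing to repair this step.

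Your closing ``delicate point'' about arranging $b,c\ge 1$ also cannot be fixed by your proposed device of ordering the generators with the larger common factor first. For $S=\langle 6,10,15\rangle$ (supersymmetric, hence free and symmetric, with $e(S)=3$), \emph{every} pair of generators shares a nontrivial factor, and for every pairing the third generator admits only representations with one coefficient zero: for $a=5$, $(m_1,m_2)=(2,3)$ one has $6=3\cdot 2+0\cdot 3$ and $2b+3c=6$ has no solution with $b,c\ge 1$, and the pairings $a=2$ and $a=3$ fail the same way. The explanation is a convention clash: in the source of the quoted statement, $\mathbb{N}$ contains $0$, so $b$ or $c$ may vanish and the hypothesis $b+c\ge 2$ is what carries the content; under the present paper's convention $\mathbb{N}=\{1,2,\dots\}$, the statement read literally is false. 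The version you should aim to prove is the one with $b,c\ge 0$ and $b+c\ge 2$; your ``if'' direction survives this relaxation (the unique-representation argument never needs both coefficients positive, only $a,m_1,m_2\ge2$ and $b+c\ge2$), and the unfixable ``delicate point'' then disappears.
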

For our purposes, if $S$ is a free numerical semigroup then $S$ is symmetric. If we also have $e(S)=3$, then by the above theorem we have $S=\langle G\rangle$ for $G=(am_1, am_2, bm_1+cm_2)$, which is necessarily telescopic (and of course minimal).

This approach will not work when $e(S)\ge4$, however, because there are symmetric numerical semigroups which are not free. For instance, the numerical semigroup $S=\langle e+1,\dots,e+e\rangle$ of embedding dimension $e$ is symmetric and not free for all $e\ge4$.

In what follows, we will take a different approach to show that every free numerical semigroup $S$ is generated by a minimal telescopic sequence.

\section{Explicit form of telescopic sequences}\label{sec:explicit-telescopic-construction}
In this section, our goal is to obtain an explicit form for the terms in a telescopic sequence, which we will need for our main result in Section \ref{sec:reduction-telescopic-minimal}. We will see a method to construct a telescopic sequence $G$ with $\gcd(G)=d$ given a desired sequence $c(G)$ and $d\in\mathbb{N}$. Our method uses the same ideas as in ``gluing'' of numerical semigroups, described by Watanabe \cite{Watanabe1973} and by Rosales and Garc\'ia-S\'anchez \cite[Chap.\ 8]{RosalesGarciaSanchez09}.

\subsection{Notation and preliminaries}
For notation, given $c_{m+1}, \dots, c_n\in\mathbb{N}_0$, let 
\begin{equation}
C_{m,n}=\prod_{j=m+1}^n c_j.
\end{equation}
If $n\le m$, then this product is empty, so $C_{m,n}=1$. In particular, we will make use of the fact that $C_{m,m}=1$. Additionally, as was mentioned in the introduction, for $i=1, \dots, k$, let $G_i=(g_1, \dots, g_i)$. Finally, recall that we will only consider sequences $G$ with $g_1+g_2>0$.

We begin with a few lemmas.

\begin{lemma}\label{lem:gcd-and-Cik}
Let $G\in\mathbb{N}_0^k$ be any sequence with $c(G)=(c_2, \dots, c_k)$. Then $\gcd(G_i)=C_{i,k}\gcd(G)$ for all $i=1, \dots, k$. 
In particular, if $\gcd(G)=1$, then $\gcd(G_i)=C_{i,k}$.
\end{lemma}
\begin{proof}
Observe that \[C_{i,k} = c_{i+1} c_{i+2} \cdots c_k = \frac{\gcd(G_i)}{\gcd(G_{i+1})}  \frac{\gcd(G_{i+1})}{\gcd(G_{i+2})}  \cdots  \frac{\gcd(G_{k-1})}{\gcd(G_k)}.\] 
With cancellation and the fact that $G_k=G$, we get $C_{i,k}=\gcd(G_i) / \gcd(G)$, as desired.
\end{proof}

For any sequence $G=(g_1, \dots, g_k)\in\mathbb{N}_0^k$ and any $m\in\mathbb{N}_0$, let $mG=(mg_1, \dots, mg_k)\in\mathbb{N}_0^k$. If $m\mid g_1, \dots, g_k$, let $G/m=(g_1/m, \dots, g_k/m)\in\mathbb{N}_0^k$. 

\begin{lemma}\label{lem:scaling-by-m}
For any $G\in\mathbb{N}_0^k$ and any $m\in\mathbb{N}_0$,
\begin{enumerate}
\item $(mG)_i = m(G_i)$; and
\item $\gcd((mG)_i)=m\gcd(G_i)$.
\end{enumerate}
\end{lemma}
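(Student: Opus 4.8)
The plan is to verify each of the two claims directly from the definitions of $mG$, $G_i$, and $\gcd$, treating them in sequence since the second depends on the first. Both statements are essentially bookkeeping about how scaling every term of a sequence by $m$ interacts with taking an initial segment and with taking a gcd, so I expect the proof to be short and computational rather than requiring any clever idea.

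For part (1), I would simply unwind both sides. By definition $mG=(mg_1, \dots, mg_k)$, and the initial segment of this scaled sequence is $(mG)_i = (mg_1, \dots, mg_i)$. On the other hand, $G_i = (g_1, \dots, g_i)$, so $m(G_i) = (mg_1, \dots, mg_i)$ as well. The two sequences agree term by term, giving $(mG)_i = m(G_i)$. This really is immediate from the definitions and needs no more than that observation.

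For part (2), I would build on part (1). Using $(mG)_i = m(G_i)$, it suffices to show $\gcd(m(G_i)) = m\gcd(G_i)$, i.e.\ that scaling all entries of a tuple by $m$ scales the gcd by $m$. This is the standard fact that $\gcd(ma_1, \dots, ma_i) = m\gcd(a_1, \dots, a_i)$ for nonnegative integers, which follows because any common divisor of $ma_1, \dots, ma_i$ that is a multiple of $m$ corresponds to a common divisor of $a_1, \dots, a_i$, and conversely. I would note the one edge case worth a word: when $m=0$ both sides are $0$ (since $mG$ is the zero sequence and $0\cdot\gcd(G_i)=0$), and when all entries vanish the gcd is taken to be $0$ consistently on both sides, so the identity still holds.

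The only place requiring any care, and hence the closest thing to an obstacle, is pinning down the degenerate cases: $m=0$, and sequences whose initial segment is entirely zero (so that $\gcd(G_i)=0$). Since the paper works in $\mathbb{N}_0$ and explicitly allows zero entries, I would make sure the convention $\gcd(0, \dots, 0)=0$ is used and check that the multiplicative identity $\gcd(m(G_i))=m\gcd(G_i)$ survives in those cases, which it does. Everything else is a direct substitution.
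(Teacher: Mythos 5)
Your proof is correct and follows essentially the same route as the paper: unwind the definitions to get $(mG)_i = m(G_i)$ termwise, then apply the standard identity $\gcd(ma_1,\dots,ma_i)=m\gcd(a_1,\dots,a_i)$. The paper's proof is a one-line version of exactly this; your extra attention to the degenerate cases $m=0$ and all-zero segments is harmless and, if anything, slightly more careful.
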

\begin{proof}
Let $H=mG=(mg_1, \dots, mg_k)$. Then $H_i=(mg_1, \dots, mg_i)=m(g_1, \dots, g_i)=m(G_i)$, so $\gcd(H_i)=\gcd(m(G_i))=m\gcd(G_i)$.
\end{proof}
\begin{lemma}\label{lem:c-vals-and-scaling}
For any $G\in\mathbb{N}_0^k$ and any $m\in\mathbb{N}$,
\begin{enumerate}
\item $c(mG)=c(G)$; and
\item $G$ is telescopic if and only if $mG$ is telescopic.
\end{enumerate}
\end{lemma}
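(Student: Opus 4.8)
The plan is to prove both parts directly from the definitions, leaning on the two preceding lemmas about scaling. The key observation is that the quantities $c_j$ are built entirely out of ratios $d_{j-1}/d_j$ of gcd's, and Lemma \ref{lem:scaling-by-m} tells us exactly how those gcd's transform under multiplication by $m$.

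For part (1), I would write $c(G)=(c_2,\dots,c_k)$ with $c_j=d_{j-1}/d_j$ where $d_i=\gcd(G_i)$, and similarly $c(mG)=(c_2',\dots,c_k')$ with $c_j'=d_{j-1}'/d_j'$ where $d_i'=\gcd((mG)_i)$. By Lemma \ref{lem:scaling-by-m}(2), $d_i'=m\,d_i$ for every $i$, so
\[
c_j'=\frac{d_{j-1}'}{d_j'}=\frac{m\,d_{j-1}}{m\,d_j}=\frac{d_{j-1}}{d_j}=c_j,
\]
using that $m\ge 1$ (so $m\ne 0$ and cancellation is legitimate). Hence $c(mG)=c(G)$. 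I should note at the start that since $m\in\mathbb{N}$ and $g_1+g_2>0$, we have $(mg_1)+(mg_2)=m(g_1+g_2)>0$, so the $c_j'$ are well-defined integers and the hypothesis carrying $c(G)$ to a valid sequence is preserved.

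For part (2), I would use part (1) together with the translation of the telescopic condition. Recall $G$ is telescopic means $c_j g_j\in\langle G_{j-1}\rangle$ for all $j\ge 2$. Suppose $G$ is telescopic. By part (1) the relevant coefficients for $mG$ are the same $c_j$, and by Lemma \ref{lem:scaling-by-m}(1) we have $(mG)_{j-1}=m(G_{j-1})$, whence $\langle (mG)_{j-1}\rangle=\langle m(G_{j-1})\rangle=m\langle G_{j-1}\rangle$. The condition $c_j g_j\in\langle G_{j-1}\rangle$ then gives $c_j(mg_j)=m(c_j g_j)\in m\langle G_{j-1}\rangle=\langle (mG)_{j-1}\rangle$, so $mG$ is telescopic. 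Conversely, if $mG$ is telescopic, the same chain of equalities run backward—dividing the membership $c_j(mg_j)\in m\langle G_{j-1}\rangle$ by $m$, which is valid since $m\ge 1$—yields $c_j g_j\in\langle G_{j-1}\rangle$, so $G$ is telescopic.

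There is no serious obstacle here; the result is essentially bookkeeping once Lemma \ref{lem:scaling-by-m} is in hand. The one point that needs care is the passage $\langle m(G_{j-1})\rangle=m\langle G_{j-1}\rangle$, which should be stated explicitly (an element of $\langle m(G_{j-1})\rangle$ is an $\mathbb{N}_0$-linear combination of the $mg_i$, hence $m$ times such a combination of the $g_i$, and vice versa), and the fact that $m\ne 0$ so that both the cancellation in part (1) and the division by $m$ in the converse of part (2) are valid—this is exactly why the hypothesis is $m\in\mathbb{N}$ rather than $m\in\mathbb{N}_0$.
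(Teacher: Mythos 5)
Your proof is correct and follows essentially the same route as the paper: part (1) by cancelling the factor of $m$ in the gcd ratios via Lemma \ref{lem:scaling-by-m}, and part (2) by observing that $c_jg_j\in\langle G_{j-1}\rangle$ holds exactly when $c_j(mg_j)\in\langle (mG)_{j-1}\rangle$. Your extra care in justifying $\langle m(G_{j-1})\rangle=m\langle G_{j-1}\rangle$ and the validity of cancellation for $m\in\mathbb{N}$ is fine, just more explicit than the paper's version.
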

\begin{proof}
Let $c(G)=(c_2, \dots, c_k)$, $H=mG$, and $c(H)=(e_2, \dots, e_k)$. For $i=1, \dots, k$, we have $H_i=m(G_i)$. Thus, for $j=2, \dots, k$,
\begin{align*}
e_j&=\gcd(H_{j-1})/\gcd(H_j)\\
&=(m\gcd(G_{j-1}))/(m\gcd(G_j))\\
&=\gcd(G_{j-1})/\gcd(G_j)\\
&=c_j,
\end{align*} so $c(H)=c(G)$.

Finally, for any $j=2, \dots, k$, suppose $c_jg_j\in\langle G_{j-1}\rangle$. For $m\in\mathbb{N}$, this occurs exactly when $c_jmg_j\in\langle mG_{j-1}\rangle$. Since $e_j=c_j$ and $h_j=mg_j$, we conclude that $c_jg_j\in\langle G_{j-1}\rangle$ if and only if $e_jh_j\in\langle H_{j-1}\rangle$. This holds for all $j=2, \dots, k$, so $G$ is telescopic if and only if $H$ is telescopic.
\end{proof}

Let $S_k$ denote the symmetric group on $k$ letters. Elements of $S_k$ act on sequences $G\in\mathbb{N}_0^k$ in a natural way: for $\sigma\in S_k$ and $G=(g_1,\dots,g_k)$, let $\sigma(G)=(g_{\sigma(1)}, \dots, g_{\sigma(k)})\in\mathbb{N}_0^k$. Since $\sigma(G)$ is a permutation of $G$, we have $\langle \sigma(G)\rangle = \langle G\rangle$. In the following proposition, we show that the permutation $(1\;2)\in S_k$ takes telescopic sequences to telescopic sequences.

\begin{prop}\label{prop:swap-first-two}
For $k\ge2$, $G=(g_1, \dots, g_k)\in\mathbb{N}_0^k$, and $\sigma=(1\;2)\in S_k$, $G$ is telescopic if and only if $\sigma(G)$ is telescopic.
\end{prop}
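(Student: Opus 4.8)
The plan is to prove this by showing that swapping $g_1$ and $g_2$ preserves the telescopic condition term by term, with the only real work concentrated at the first two indices. The key observation is that for any index $i\ge 2$, the initial segment $G_i=(g_1,\dots,g_i)$ is unchanged as a \emph{set} when we swap the first two entries: $\sigma(G)_i$ and $G_i$ have the same underlying multiset of entries. Consequently $\gcd(\sigma(G)_i)=\gcd(G_i)$ for all $i\ge 2$, and since $G_1=(g_1)$ versus $\sigma(G)_1=(g_2)$ are the only segments that genuinely differ, the gcd sequence $d_1,\dots,d_k$ can change only in its first term. First I would record this: writing $d_i=\gcd(G_i)$ and $d_i'=\gcd(\sigma(G)_i)$, we have $d_i=d_i'$ for all $i\ge 2$, while $d_1=g_1$ and $d_1'=g_2$.

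Next I would compare the two $c$-sequences $c(G)=(c_2,\dots,c_k)$ and $c(\sigma(G))=(c_2',\dots,c_k')$. For $j\ge 3$ we have $c_j'=d_{j-1}'/d_j'=d_{j-1}/d_j=c_j$, so those entries are identical. Only $c_2'=d_1'/d_2'=g_2/d_2$ differs from $c_2=g_1/d_2$. Because of this, the telescopic conditions $c_j'\,g_{\sigma(j)}\in\langle \sigma(G)_{j-1}\rangle$ for $j\ge 3$ are literally the same statements as the conditions for $G$: the multiplier $c_j'=c_j$, the term $g_{\sigma(j)}=g_j$, and the generating set $\langle \sigma(G)_{j-1}\rangle=\langle G_{j-1}\rangle$ all coincide (the last because the initial segments agree as sets for $j-1\ge 2$). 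So those conditions carry over automatically, and the entire problem reduces to the single index $j=2$.

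It therefore remains to show that the $j=2$ condition for $G$ is equivalent to the $j=2$ condition for $\sigma(G)$; that is, $c_2 g_2\in\langle g_1\rangle$ if and only if $c_2' g_1\in\langle g_2\rangle$. Here $c_2=g_1/\gcd(g_1,g_2)$ and $c_2'=g_2/\gcd(g_1,g_2)$. The point is that $c_2 g_2 = g_1 g_2/\gcd(g_1,g_2)=\operatorname{lcm}(g_1,g_2)$, which is always a nonnegative multiple of $g_1$, hence always lies in $\langle g_1\rangle$; and symmetrically $c_2' g_1=\operatorname{lcm}(g_1,g_2)\in\langle g_2\rangle$ always. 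So both $j=2$ conditions hold unconditionally, and they are vacuously equivalent. (This matches the earlier remark that every length-$2$ sequence is telescopic.) I expect the main obstacle to be purely bookkeeping: being careful that ``$G_i$ and $\sigma(G)_i$ agree as sets'' genuinely holds for all $i\ge 2$ and not for $i=1$, and tracking which of $c_2,c_2'$ uses $g_1$ versus $g_2$. One edge case to handle explicitly is when $g_1=0$ or $g_2=0$: the hypothesis $g_1+g_2>0$ guarantees the relevant gcds are nonzero so all the $c_j$ are well-defined, but I would double-check the $\langle g_1\rangle$ versus $\langle g_2\rangle$ membership in the degenerate case, where $\operatorname{lcm}$ with $0$ collapses and the required membership still holds trivially.
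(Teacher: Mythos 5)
Your proposal is correct and follows essentially the same route as the paper's proof: both rest on the observation that the initial segments $G_i$ and $\sigma(G)_i$ agree as multisets for $i\ge 2$ (so the $c_j$ and the telescopic conditions coincide for $j\ge 3$), together with the computation $c_2g_2=g_1g_2/\gcd(g_1,g_2)\in\langle g_1\rangle$ at the base index. The only cosmetic difference is that the paper proves one implication and then invokes the fact that $\sigma=(1\;2)$ is an involution, whereas you obtain the biconditional directly by noting both $j=2$ conditions hold unconditionally.
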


\begin{proof}
Suppose $G$ is telescopic and let $H=\sigma(G)$. Then $H=(h_1, \dots, h_k)=(g_2, g_1, g_3, \dots, g_k)$. Let $c(H)=(e_2, \dots, e_k)$.

We have $e_2=\gcd(H_1)/\gcd(H_2)=g_2/\gcd(g_2, g_1)$, so \[e_2h_2=\frac{g_2}{\gcd(g_2, g_1)}g_1=\frac{g_1}{\gcd(g_1, g_2)}g_2.\] Since $g_1/\gcd(g_1, g_2)\in\mathbb{N}_0$, $e_2h_2\in\langle g_2\rangle=\langle H_1\rangle$.

For $i>1$, $H_i$ is a permutation of $G_i$, so $\langle H_i\rangle=\langle G_i\rangle$ and $\gcd(H_i)=\gcd(G_i)$.  For $j>2$, $e_j=\gcd(H_{j-1})/\gcd(H_j)=\gcd(G_{j-1})/\gcd(G_j)=c_j$, and therefore $e_jh_j=c_jg_j$. Since $G$ is telescopic, $c_jg_j\in\langle G_{j-1}\rangle$, so $e_jh_j\in\langle G_{j-1}\rangle=\langle H_{j-1}\rangle$. Thus $e_jh_j\in\langle H_{j-1}\rangle$ for $j=2, \dots, k$, so $H$ is a telescopic sequence. 

For the reverse implication, since $\sigma$ has order two, if $\sigma(G)$ is telescopic, then $\sigma(\sigma(G))=G$ is telescopic as well.
\end{proof}

\begin{rmk}\label{rmk:first-term-nonzero}
Since we already have the restriction that $g_1+g_2>0$, and since $G$ is telescopic exactly when $(1\;2)(G)$ is telescopic, for the rest of this paper we will assume $g_1>0$. Therefore $\gcd(g_1,\dots,g_i)>0$ for all $i\ge1$, and since $c_j=\gcd(g_1, \dots, g_{j-1})/\gcd(g_1, \dots, g_j)$, this will ensure that $c_j>0$ for all $j\ge2$ as well. We can therefore divide by $g_1$, $\gcd(G_i)$ for all $i$, and $c_j$ for all $j$ without worrying about possibly dividing by zero.
\end{rmk}

Finally, we show that the sequence consisting of the first $m$ terms of a telescopic sequence forms a telescopic sequence on its own.

\begin{lemma}\label{lem:shortened-telescopic}
For any $k\in\mathbb{N}$, let $G\in\mathbb{N}_0^k$ be a telescopic sequence with $g_1>0$ and $c(G)=(c_2, \dots, c_k)$. For any $m\in\{1, \dots, k\}$, the sequence $G_m=(g_1, \dots, g_m)$ is telescopic with $c(G_m)=(c_2, \dots, c_m)$. 
\end{lemma}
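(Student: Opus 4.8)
The plan is to prove this directly from the definition of telescopic by observing that the prefix sequence $G_m$ shares all of its relevant combinatorial data with $G$. The key point, which I would record first, is the prefix identity: for any index $i\le m$, the $i$-th prefix of $G_m$ is literally the same tuple as the $i$-th prefix of $G$, since both equal $(g_1, \dots, g_i)$. In symbols, $(G_m)_i = G_i$ for all $i \le m$.

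From this identity I would extract the consequences for the gcd data and for $c(G_m)$. Taking gcds gives $\gcd((G_m)_i) = \gcd(G_i)$ for all $i\le m$. Because $g_1>0$ (Remark \ref{rmk:first-term-nonzero}), each such gcd is positive, so the quotients defining $c(G_m)$ are well-defined; and for $2\le j\le m$ the $j$-th entry of $c(G_m)$ equals $\gcd((G_m)_{j-1})/\gcd((G_m)_j) = \gcd(G_{j-1})/\gcd(G_j) = c_j$. Hence $c(G_m)=(c_2,\dots,c_m)$, which is one of the two claims. It then remains to verify the telescopic condition for $G_m$: for each $j$ with $2\le j\le m$, this condition reads $c_jg_j\in\langle (G_m)_{j-1}\rangle$, but $(G_m)_{j-1}=G_{j-1}$ and the associated $c$-value is the same $c_j$, so it is exactly the condition $c_jg_j\in\langle G_{j-1}\rangle$ that holds because $G$ is telescopic. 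Since $\{2,\dots,m\}\subseteq\{2,\dots,k\}$, every required condition is among those guaranteed by the hypothesis.

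There is no genuine obstacle here beyond bookkeeping: the entire content is the observation that passing to a prefix neither changes the partial gcds nor introduces any new membership conditions to check -- it only discards the conditions for indices $j>m$. The one point requiring a moment's care is confirming that $c(G_m)$ is well-defined, which is precisely why the hypothesis $g_1>0$ is invoked.
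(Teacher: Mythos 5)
Your proof is correct and follows essentially the same route as the paper's: both rest on the observation that $(G_m)_i = G_i$ for $i\le m$, from which the equality of the $c$-values and the telescopic conditions for $G_m$ follow immediately from those of $G$. The only cosmetic difference is that the paper introduces the notation $H=G_m$ with $c(H)=(e_2,\dots,e_m)$ before making the identical computation.
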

\begin{proof}
Let $m\in\{1, \dots, k\}$, let $H=G_m$, and let $c(H)=(e_2, \dots, e_m)$. Then $H=(h_1, \dots, h_m)$ with $h_i=g_i$ for $i=1, \dots, m$. Observe that $h_1>0$. We have $e_j=\gcd(H_{j-1})/\gcd(H_j)=\gcd(G_{j-1})/\gcd(G_j)=c_j$ for $2\le j\le m$. Since $c_jg_j\in\langle G_{j-1}\rangle$ for $j=2, \dots, k$, and since $m\le k$, it follows that $e_j h_j\in\langle H_{j-1}\rangle$ for $j=2, \dots, m$. Therefore $H$ is telescopic with $c(H)=(c_2, \dots, c_m)$.
\end{proof}

\subsection{Construction of telescopic sequences}
We can now give an explicit description for the elements of a telescopic sequence. We do so first in the case where $\gcd(G)=1$ and then for any value of $\gcd(G)$. 

\begin{prop}\label{prop:explicit-description-telescopic}
Suppose $G=(g_1, \dots, g_k)\in\mathbb{N}_0^k$ with $\gcd(G)=1$ and $c(G)=(c_2, \dots, c_k)$. Let $z_1=1$. Then $G$ is a telescopic sequence if and only if, for each $i=2, \dots, k$, there exists $z_i\in\mathbb{N}_0$ such that $g_i=z_i C_{i,k}$, $\gcd(z_i, c_i)=1$, and \[z_i\in\left\langle z_j C_{j,i-1} : 1\le j<i \right\rangle.\]
\end{prop}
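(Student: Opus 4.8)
$$\textbf{Proof proposal.}$$

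The plan is to prove both directions by strong induction on $i$, exploiting the key structural fact from Lemma~\ref{lem:gcd-and-Cik}: since $\gcd(G)=1$, we have $\gcd(G_i)=C_{i,k}$ for every $i$. Because $C_{i,k}=c_{i+1}c_{i+2}\cdots c_k$ divides each of $g_1,\dots,g_i$, it certainly divides $g_i$, so we may \emph{define} $z_i=g_i/C_{i,k}\in\mathbb{N}_0$ (with $z_1=g_1/C_{1,k}=g_1/\gcd(G_1)=1$, consistent with the hypothesis $z_1=1$). The entire proposition is then really a restatement of what the telescopic condition $c_ig_i\in\langle G_{i-1}\rangle$ says about these normalized values $z_i$, together with a gcd bookkeeping condition.

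First I would verify the gcd condition $\gcd(z_i,c_i)=1$. We have $C_{i-1,k}=c_i\,C_{i,k}$, so $g_i=z_iC_{i,k}$ gives $\gcd(G_i)=C_{i,k}$ while $\gcd(G_{i-1})=C_{i-1,k}=c_iC_{i,k}$. The point is that $c_i=\gcd(G_{i-1})/\gcd(G_i)$ forces $\gcd(g_i,\gcd(G_{i-1}))=\gcd(G_i)$, i.e. $\gcd(z_iC_{i,k},\,c_iC_{i,k})=C_{i,k}$, and dividing through by $C_{i,k}$ yields $\gcd(z_i,c_i)=1$. This equivalence holds regardless of telescopicity, so it captures exactly the definitional content of $c(G)$.

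Next I would translate the membership condition. The core claim is that
\[
c_ig_i\in\langle G_{i-1}\rangle
\quad\Longleftrightarrow\quad
z_i\in\big\langle z_jC_{j,i-1}:1\le j<i\big\rangle.
\]
The forward direction writes $c_ig_i=\sum_{j<i}n_jg_j$ with $n_j\in\mathbb{N}_0$; substituting $g_j=z_jC_{j,k}$ and $c_ig_i=c_iz_iC_{i,k}=z_iC_{i-1,k}$, then dividing every term by the common factor $C_{i-1,k}=C_{j,k}/C_{j,i-1}$ (using the multiplicativity $C_{j,k}=C_{j,i-1}C_{i-1,k}$ for $j\le i-1$), collapses the relation to $z_i=\sum_{j<i}n_j z_jC_{j,i-1}$, exhibiting $z_i$ in the desired submonoid. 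The reverse direction reverses this scaling. I expect this divisibility/cancellation step to be the main obstacle, since one must check that $C_{i-1,k}$ really is a common factor of every term and that the arithmetic of the $C_{m,n}$ indices lines up correctly; Lemma~\ref{lem:gcd-and-Cik} and the telescopic condition from Lemma~\ref{lem:shortened-telescopic} (which guarantees each $G_{i-1}$ is itself telescopic, so the relevant gcds behave) are what make it go through.

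Finally, I would assemble the induction. For the ``only if'' direction, assuming $G$ telescopic, Lemma~\ref{lem:shortened-telescopic} gives that each truncation is telescopic with the expected $c$-values, the gcd computation supplies $\gcd(z_i,c_i)=1$, and the membership translation supplies the containment for each $i$. For the ``if'' direction, given the $z_i$ with the three stated properties, I reconstruct $g_i=z_iC_{i,k}$, confirm via Lemma~\ref{lem:gcd-and-Cik}-style bookkeeping that $c(G)$ is indeed $(c_2,\dots,c_k)$ and $\gcd(G)=1$, and then run the membership translation backwards to recover $c_ig_i\in\langle G_{i-1}\rangle$ for every $i$, which is exactly telescopicity by definition.
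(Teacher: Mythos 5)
Your proof is correct, but it is structured differently from the paper's. The paper proves the forward direction by strong induction on $i$: at each step it \emph{derives} the integrality of $z_{n+1}=g_{n+1}/C_{n+1,k}$ from the telescopic relation $c_{n+1}g_{n+1}\in\langle G_n\rangle$ (dividing that relation through by $C_{n,k}$), and then verifies $\gcd(z_{n+1},c_{n+1})=1$ separately; the backward direction is the same scaling computation you give. Your route dispenses with the induction entirely by observing that, for \emph{any} sequence with $\gcd(G)=1$, Lemma \ref{lem:gcd-and-Cik} already gives $C_{i,k}=\gcd(G_i)\mid g_i$, so $z_i=g_i/C_{i,k}\in\mathbb{N}_0$ is well-defined and unique, and that $\gcd(z_i,c_i)=1$ follows from $\gcd(G_i)=\gcd(\gcd(G_{i-1}),g_i)$ irrespective of telescopicity. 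This isolates the single piece of the statement that actually carries the telescopic content, namely the index-by-index equivalence $c_ig_i\in\langle G_{i-1}\rangle \Leftrightarrow z_i\in\langle z_jC_{j,i-1}:1\le j<i\rangle$, which you prove by multiplying/dividing by $C_{i-1,k}$ (using $C_{j,k}=C_{j,i-1}C_{i-1,k}$ and $C_{i-1,k}>0$, the latter guaranteed by the standing assumption $g_1>0$ of Remark \ref{rmk:first-term-nonzero}). What the paper's induction buys is a self-contained derivation in which integrality of the $z_i$ emerges from telescopicity itself; what your version buys is brevity and a sharper structural picture: the first two conditions in the proposition are automatic, and the two directions of the equivalence are literally the same computation read in opposite directions. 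One small remark: your appeal to Lemma \ref{lem:shortened-telescopic} is unnecessary — all the gcd bookkeeping you need is already in Lemma \ref{lem:gcd-and-Cik} — but this does not affect correctness.
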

\begin{proof}
$(\implies)$ Suppose $G$ is telescopic. For $i=2, \dots, k$, we will use strong induction.

We first show the statement is true for the base case $i=2$. Since $G$ is telescopic we have $c_2 g_2 \in\langle g_1 \rangle$. Let $z_2 = c_2 g_2 / g_1 \in\langle 1\rangle=\langle z_1 C_{1,1}\rangle$ since $z_1=C_{1,1}=1$. Then, since $g_1 = C_{1,k}$ (by Lemma \ref{lem:gcd-and-Cik}), we get $g_2 = z_2 g_1 / c_2 = z_2 C_{2,k}$. By definition $c_2 = g_1 / \gcd(g_1, g_2)$, so $C_{2,k}= g_1 / c_2 = \gcd(g_1, g_2) = \gcd(C_{1,k},  z_2 C_{2,k}) = \gcd(c_2 C_{2,k}, z_2 C_{2,k}) = C_{2,k} \gcd(c_2, z_2)$. Hence $\gcd(c_2, z_2)=1$. The conditions are therefore satisfied for $i=2$.

For strong induction, for $i=2, \dots, n$, with $n<k$, we assume $g_i = z_i C_{i,k}$ with $\gcd(z_i, c_i)=1$ and $z_i\in\langle z_j C_{j,i-1}: 1\le j<i\rangle$. By Lemma \ref{lem:gcd-and-Cik}, $g_1=C_{1,k}$ so $C_{n,k}\mid g_1$.  Then, since $C_{n,k} \mid C_{j,k}$ for $j=2, \dots, n$, by induction $C_{n,k}\mid g_2, \dots, g_n$. Since $G$ is telescopic, we have 
$c_{n+1}g_{n+1}\in\langle g_1, \dots, g_n\rangle$. We can divide through by $C_{n,k}$ (using the fact that $g_j/C_{n,k}=z_j C_{j,n}$) to get
\[\frac{c_{n+1}g_{n+1}} {C_{n,k}} = \frac{g_{n+1}} {C_{n+1,k}} \in \langle z_1 C_{1,n}, \dots, z_n C_{n,n} \rangle\subseteq\mathbb{N}_0.\] Let $z_{n+1} = g_{n+1} / C_{n+1,k}$. Then $g_{n+1} = z_{n+1} C_{n+1,k}$, and $z_{n+1}\in\langle z_j C_{j,n} : 1\le j< n+1 \rangle$.

Next, we verify the gcd condition. By Lemma \ref{lem:gcd-and-Cik}, $\gcd(G_n)=C_{n,k}$, so
\[c_{n+1} = \frac{\gcd(G_n)} {\gcd(G_{n+1})} = \frac{C_{n,k}} {\gcd(C_{n,k},  g_{n+1})}.\]
Therefore
\begin{align*}
C_{n+1,k}&=C_{n,k}/c_{n+1}\\
&=\gcd(C_{n,k}, g_{n+1})\\
&=\gcd(C_{n,k}, z_{n+1}C_{n+1,k}) \\
&= C_{n+1,k}\gcd(c_{n+1}, z_{n+1}),
\end{align*}
so $\gcd(c_{n+1}, z_{n+1})=1$. 

Since we have verified all of the conditions for $i=n+1$, by induction the statement is true for all $i=2, \dots, k$.

$(\impliedby)$
Suppose $G=(g_1, g_2, \dots, g_k)=(C_{1,k}, z_2 C_{2,k}, \dots, z_k C_{k,k})$ with $\gcd(z_i, c_i)=1$ and $z_i \in \langle z_j C_{j, i-1} : 1\le j<i \rangle$ for $i=2, \dots, k$. 
For $G$ to be telescopic, we need $c_i g_i \in \langle G_{i-1} \rangle$ for $i=2, \dots, k$. Since $c_i g_i=c_i z_i C_{i,k} = z_i C_{i-1,k}$ and $z_i\in\langle z_jC_{j,i-1} : 1\le j<i\rangle$, we have 
\begin{align*}
c_i g_i = z_i C_{i-1, k} & \in\langle z_j C_{j, i-1} C_{i-1, k} : 1\le j<i \rangle \\
& =\langle z_j C_{j,k} : 1\le j<i \rangle\\
& =\langle g_1, \dots, g_{i-1} \rangle\\
& = \langle G_{i-1}\rangle
\end{align*}
for $i=2, \dots, k$. Thus, $G$ is telescopic.
\end{proof}

We now give the result for any $\gcd(G)$.

\begin{cor}\label{cor:explicit-description-telescopic}
Suppose $G=(g_1, \dots, g_k)\in\mathbb{N}_0^k$ and $c(G)=(c_2, \dots, c_k)$. Let $z_1=\gcd(G)$. Then $G$ is a telescopic sequence if and only if, for each $i=2, \dots, k$, there exists $z_i\in\langle z_1\rangle$ such that $g_i=z_iC_{i,k}$, $\gcd(z_i/z_1, c_i)=1$, and \[z_i\in\langle z_jC_{j,i-1}:1\le j<i\rangle.\]
\end{cor}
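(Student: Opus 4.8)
The plan is to reduce Corollary \ref{cor:explicit-description-telescopic} to the already-proved Proposition \ref{prop:explicit-description-telescopic} via the scaling lemmas, rather than repeat the induction. Let $d=\gcd(G)=z_1$. The key observation is that $G$ is telescopic with $\gcd(G)=d$ if and only if $G/d$ is telescopic with $\gcd(G/d)=1$: this follows from Lemma \ref{lem:c-vals-and-scaling}, which gives both $c(G/d)=c(G)=(c_2,\dots,c_k)$ and the equivalence of telescopicity under scaling (applying part (2) with $m=d$ to the sequence $G/d$). So I would set $\widetilde{G}=G/d=(\tilde g_1,\dots,\tilde g_k)$, where $\tilde g_i=g_i/d$, and note $\gcd(\widetilde{G})=1$ by Lemma \ref{lem:scaling-by-m}(2).

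Now I would apply Proposition \ref{prop:explicit-description-telescopic} to $\widetilde{G}$. Since $\widetilde{G}$ has gcd $1$ and $c(\widetilde{G})=(c_2,\dots,c_k)$, the proposition says $\widetilde{G}$ is telescopic if and only if there exist $\tilde z_1=1,\tilde z_2,\dots,\tilde z_k\in\mathbb{N}_0$ with $\tilde g_i=\tilde z_i C_{i,k}$, $\gcd(\tilde z_i,c_i)=1$, and $\tilde z_i\in\langle \tilde z_j C_{j,i-1}:1\le j<i\rangle$ for each $i\ge2$. The final step is to translate these $\tilde z_i$-conditions back to $G$ by multiplying through by $d$. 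Setting $z_i=d\,\tilde z_i$ (so in particular $z_1=d\cdot1=\gcd(G)$ as required), the relation $g_i=d\tilde g_i=d\tilde z_i C_{i,k}=z_i C_{i,k}$ holds; the membership $\tilde z_i\in\langle \tilde z_j C_{j,i-1}\rangle$ scales to $z_i\in\langle z_j C_{j,i-1}:1\le j<i\rangle$ because multiplying every generator and the target by the fixed positive integer $d$ preserves $\mathbb{N}_0$-linear membership; and the coprimality $\gcd(\tilde z_i,c_i)=1$ is exactly $\gcd(z_i/z_1,c_i)=1$ since $\tilde z_i=z_i/d=z_i/z_1$. The condition $z_i\in\langle z_1\rangle$ is automatic from $z_i=d\tilde z_i$ with $\tilde z_i\in\mathbb{N}_0$.

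The main subtlety to get right is the direction of the membership equivalence under scaling: one needs that $z_i\in\langle z_j C_{j,i-1}:1\le j<i\rangle$ holds \emph{if and only if} $\tilde z_i\in\langle \tilde z_j C_{j,i-1}:1\le j<i\rangle$, not merely in one direction. This is where the hypothesis $d>0$ (guaranteed by $z_1=\gcd(G)\ge1$, using Remark \ref{rmk:first-term-nonzero}) is essential, since $n\in\langle A\rangle$ is equivalent to $dn\in\langle dA\rangle$ precisely when $d$ is a nonzero constant common to both sides; the $C_{j,i-1}$ factors are identical on both sides because $c(G)=c(\widetilde G)$, so they play no role in the equivalence beyond being shared coefficients. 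I expect this bookkeeping between the $z_i$ and $\tilde z_i$ descriptions to be the only real content; everything else is a direct citation of Lemmas \ref{lem:scaling-by-m} and \ref{lem:c-vals-and-scaling} together with Proposition \ref{prop:explicit-description-telescopic}.
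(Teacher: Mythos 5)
Your proof is correct and follows essentially the same route as the paper's own argument: divide by $d=\gcd(G)$, apply Proposition \ref{prop:explicit-description-telescopic} to the gcd-$1$ sequence $G/d$ using Lemma \ref{lem:c-vals-and-scaling}, and rescale the witnesses by $d$ (your $\tilde z_i$ are the paper's $y_i$). Your extra remark justifying the two-way equivalence of the membership condition under scaling by $d>0$ is a detail the paper leaves implicit, but it is the same proof.
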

\begin{proof}
Let $d=\gcd(G)$ and $H=G/d$, so $\gcd(H)=1$. By Lemma \ref{lem:c-vals-and-scaling}, $c(H)=c(G)=(c_2, \dots, c_k)$. Let $y_1=1$. For $H=(h_1, \dots, h_k)$, by Proposition \ref{prop:explicit-description-telescopic}, $H$ is telescopic if and only if, for each $i=2, \dots, k$, there exists $y_i\in\mathbb{N}_0$ such that $h_i=y_iC_{i,k}$, $\gcd(y_i, c_i)=1$, and $y_i\in\langle y_jC_{j,i-1}:1\le j<i\rangle$. Since $G=dH$, $g_i=dh_i$. By Lemma \ref{lem:c-vals-and-scaling}, $G$ is telescopic if and only if $H$ is telescopic. Finally, let $z_i=dy_i$.

We rewrite the quoted result of Proposition \ref{prop:explicit-description-telescopic} to now say that with $z_1=dy_1=d$, $G$ is telescopic if and only if, for each $i=2, \dots, k$, there exists $z_i\in  d\mathbb{N}_0$ such that $g_i=z_iC_{i,k}$, $\gcd(z_i/d, c_i)=1$, and $z_i\in\langle z_jC_{j,i-1}:1\le j<i\rangle$. Since $z_1=d$, the corollary statement follows.
\end{proof}

\begin{rmk}\label{rmk:how-to-construct-telescopic}
For $k\in\mathbb{N}$, given any sequence $(c_2,\dots,c_k)\in\mathbb{N}^{k-1}$ and any $d\in\mathbb{N}$, we have a process to construct any telescopic sequence $G\in\mathbb{N}_0^k$ for which $\gcd(G)=d$ and $c(G)=(c_2,\dots,c_k)$. We just need non-negative integers $z_1, \dots, z_k$ where $z_1=d$ and, for $i=2,\dots,k$, $\gcd(z_i, d c_i)=d$ and $z_i \in \langle z_j C_{j,i-1} : 1\le j<i\rangle$. Then, for $i=1,\dots,k$, we let $g_i=z_iC_{i,k}$ to produce $G=(g_1,\dots,g_k)$, a sequence for which $\gcd(G)=d$ and $c(G)=(c_2,\dots,c_k)$.
\end{rmk}

\begin{ex}\label{ex:telescopic-construction}
Suppose we want a telescopic sequence $G\in\mathbb{N}_0^5$ with $\gcd(G)=4$ and $c(G)=(c_2,c_3,c_4,c_5)=(3, 2, 5, 3)$.

To start, we have $z_1=\gcd(G)=4$.

For $z_2,z_3,z_4,z_5$, we have some choices to make. For $z_2$, we need $\gcd(z_2,4\cdot 3)=4$ and $z_2\in\langle z_1\rangle$. For $z_3$, we need $\gcd(z_3,4\cdot 2)=4$ and $z_3\in\langle 3z_1,z_2 \rangle$. For $z_4$, we need $\gcd(z_4,4\cdot 5)=4$ and $z_4\in\langle 6z_1, 2z_2, z_3\rangle$. For $z_5$, we need $\gcd(z_5,4\cdot 3)=4$ and $z_5\in\langle 30z_1,10z_2,5z_3,z_4\rangle$.

One option is to take $z_2=z_3=z_4=z_5=4$. With $g_i=z_iC_{i,k}$, we get $G=(g_1,g_2,g_3,g_4,g_5)=(360,120,60,12,4)$.

Another option is to take $z_2=8$, $z_3=20$, $z_4=28$, $z_5=44$. With $g_i=z_iC_{i,k}$, we get $G=(360,240,300,84,44)$.

Both sequences are telescopic with $\gcd(G)=4$ and $c(G)=(3,2,5,3)$.
\end{ex}

Observe that the first sequence in Example \ref{ex:telescopic-construction} is not minimal, whereas the second sequence is. In Section \ref{sec:explicit-minimal-telescopic-construction}, we present Corollary \ref{cor:explicit-min}, a refinement of Remark \ref{rmk:how-to-construct-telescopic}, which allows us to construct any minimal telescopic sequence. We postpone its appearance because it relies on a result (Proposition \ref{prop:two-cases}) from Section \ref{sec:reduction-telescopic-minimal}.

\subsection{Specific values for certain telescopic sequences}\label{subsec:specific-values}
We describe the sequences $(c_2, \dots, c_k)$ and $(z_1, \dots, z_k)$ that occur for some families of telescopic sequences: geometric, supersymmetric, and compound sequences. Gassert and Shor \cite{GassertShor17} detail some applications of Corollary \ref{cor:tuenter-telescopic-id} to these sequences, as well as connections to certain algebraic curves.

A (finite) geometric sequence $G$ of length $k$ with $\gcd(G)=1$ is a sequence of the form $(g_1, \dots, g_k)$ where $g_i=a^{k-i}b^{i-1}$ for $a, b \in \mathbb{N}$ with $\gcd(a, b)=1$. In this case, $\gcd(g_1, \dots, g_i)=a^{k-i}$, so $c_i=a$ for $i=2, \dots, k$. In the notation of Proposition \ref{prop:explicit-description-telescopic}, we have $z_i=b^{i-1}$ for $i=1, \dots, k$.

As named and studied by Fr\"oberg, Gottlieb, and H\"aggkvist \cite{FrobergGottliebHaggkvist87}, a \emph{supersymmetric sequence} $G$ of length $k$ with $\gcd(G)=1$ is a sequence of the form $(g_1, \dots, g_k)$ where $g_i =A / a_i$ for pairwise coprime natural numbers $a_1, \dots, a_k$ and $A=a_1 \cdots a_k$. In this case, $\gcd(g_1, \dots, g_i) = A/(a_1 \cdots a_i)$, so $c_i = a_i$. In the notation of Proposition \ref{prop:explicit-description-telescopic}, we have $z_1=1$ and $z_i = a_1 \cdots a_{i-1}$ for $i = 2, \dots, k$.

Geometric and supersymmetric sequences are special cases of compound sequences, which were named and studied by Kiers, O'Neill, and Ponomarenko \cite{KiersONeillPonomarenko16}. A \emph{compound sequence} $G$ of length $k$ with $\gcd(G)=1$ is a sequence of the form $(g_1, \dots, g_k)$ where $g_i = b_1 \cdots b_{i-1} a_i \cdots a_{k-1}$ for $a_i, b_j \in \mathbb{N}$ with $\gcd(a_i, b_j)=1$ for all $i\ge j$. In this case, $\gcd(g_1, \dots, g_i) = a_{i}\cdots a_{k-1}$, so $c_i = a_{i-1}$. In the notation of Proposition \ref{prop:explicit-description-telescopic}, we have $z_1=1$ and $z_i = b_1 \cdots b_{i-1}$ for $i=2, \dots, k$.

\section{Operations on telescopic sequences}\label{sec:sequence-operations}
In this section, we will introduce two operations: $\rho_n$, which maps a sequence of length $k$ to a sequence of length $k-1$; and $\tau_{g,m}$, a gluing map as described by Rosales and Garc\'ia-S\'anchez \cite[Chap.\ 8]{RosalesGarciaSanchez09}, which maps a sequence of length $k$ to a sequence of length $k+1$. With appropriate parameters, these operations ``preserve telescopicness,'' which is to say they map telescopic sequences to telescopic sequences.  We will also show that any function between two telescopic sequences with the same greatest common divisor can be written as a composition of these functions. In particular, $\rho_n$ will be useful in Section \ref{sec:reduction-telescopic-minimal} where we will use it to eliminate redundant terms from a telescopic sequence which is not minimal.

As a reminder, in Remark \ref{rmk:first-term-nonzero} we saw that we can assume $g_1>0$ for any sequence $G=(g_1,\dots,g_k)\in\mathbb{N}_0^k$. We will continue to make this assumption, from which it follows that $\gcd(G_i)>0$ for all $i\ge1$ and $c_j>0$ for all $j\ge2$.

\subsection{Two useful sequence operations}
For $k,l\in\mathbb{N}$, let $G=(g_1, \dots, g_k)\in\mathbb{N}_0^k$ and $H=(h_1, \dots, h_l)\in\mathbb{N}_0^l$. 
For integers $i$ and $j$ with $0\le i < j\le k$, let $G_{i,j}=(g_{i+1}, g_{i+2}, \dots, g_j)\in\mathbb{N}_0^{j-i}$. 
Let $G\times H=(g_1, \dots, g_k, h_1, \dots, h_l)\in\mathbb{N}_0^{k+l}$.

Using this notation, we have the following results.
\begin{prop}\label{prop:basic-props}
For any $G\in\mathbb{N}_0^k$,
\begin{enumerate}
\item $\gcd(G\times H)=\gcd\left(\gcd(G), \gcd(H)\right)$ for any $H\in\mathbb{N}_0^l$; and
\item $G_{i,j}\times G_{j,l}=G_{i,l}$ for any integers $i$, $j$, $l$ with $0\le i<j<l\le k$.
\end{enumerate}
\end{prop}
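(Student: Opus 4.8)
The plan is to treat the two parts separately, since each follows directly from the relevant definition. Both are elementary, so the emphasis is on pinning down the right characterization rather than on any substantive computation.

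For part (1), I would argue at the level of common divisors rather than manipulating gcd values directly. The key fact is the defining property of the greatest common divisor: for any finite tuple, an integer $d$ divides every entry if and only if $d$ divides their $\gcd$ (and this remains valid under the convention that every integer divides $0$, which is what lets the argument cover tuples with zero entries, including the all-zeros case). Applying this to $G\times H=(g_1,\dots,g_k,h_1,\dots,h_l)$, an integer $d$ divides every entry precisely when $d$ divides every $g_i$ and every $h_j$, which in turn holds precisely when $d\mid\gcd(G)$ and $d\mid\gcd(H)$. Hence $G\times H$ and the two-term tuple $(\gcd(G),\gcd(H))$ have exactly the same set of common divisors, and therefore the same greatest common divisor, which gives $\gcd(G\times H)=\gcd(\gcd(G),\gcd(H))$.

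For part (2), I would simply unwind the block notation $G_{i,j}$ and the definition of concatenation. By definition $G_{i,j}=(g_{i+1},\dots,g_j)$ and $G_{j,l}=(g_{j+1},\dots,g_l)$, and since $i<j<l$ these are genuine nonempty contiguous blocks whose index ranges abut: the first ends at index $j$ and the second begins at index $j+1$. Concatenating them therefore yields $(g_{i+1},\dots,g_j,g_{j+1},\dots,g_l)=(g_{i+1},\dots,g_l)$, which is exactly $G_{i,l}$.

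I do not anticipate any real obstacle here; the one spot requiring a moment of care is the boundary behavior of $\gcd$ in part (1) when zeros are present (for example when $H$ is the all-zeros tuple, so $\gcd(H)=0$ and $\gcd(\gcd(G),0)=\gcd(G)$), and phrasing the argument through common divisors and the convention $d\mid 0$ handles this cleanly without any case analysis.
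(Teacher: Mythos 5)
Your proof is correct and takes essentially the same route as the paper: part (2) is the identical unwinding of the concatenation notation, and part (1) rests on the same associativity-of-$\gcd$ fact that the paper cites, which you simply prove explicitly via the common-divisor characterization (also checking the zero-entry edge case). No gaps.
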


\begin{proof}
The first result is a consequence of the fact that for $a, b, c\in\mathbb{Z}$, $\gcd(a, b, c)=\gcd(\gcd(a, b), c)$. 

The second result is equivalent to saying $(g_{i+1}, \dots, g_j)\times (g_{j+1}, \dots, g_l)=(g_{i+1}, \dots, g_l)$, which is true when $i<j<l$.
\end{proof}

We now introduce two operations on sequences which preserve telescopicness. The first, $\rho_n$, produces a sequence with one fewer entry. The second, $\tau_{g,m}$, produces a sequence that has one more entry. As we will see, given any two telescopic sequences $G$ and $H$ with $\gcd(G)=\gcd(H)$, one can compose finitely many of these functions together to transform $G$ into $H$.

For the definition of $\rho_n$ below, note that for $G\in\mathbb{N}_0^k$, $c(G)=(c_2, \dots, c_k)$, and $n=2, \dots, k$, we have $c_n\mid C_{n-1,k}$. Then, by Lemma \ref{lem:gcd-and-Cik}, $c_n\mid \gcd(G_{n-1})$, so $G_{n-1}/c_n\in\mathbb{N}_0^{n-1}$.
\begin{defn}
For any $k\ge2$, $G\in\mathbb{N}_0^k$, $c(G)=(c_2, \dots, c_k)$, and $n=2, \dots, k$, let 
\begin{equation}
\rho_n(G)=\left(G_{n-1}/c_n\right)\times G_{n,k}\in\mathbb{N}_0^{n-1}\times\mathbb{N}_0^{k-n}=\mathbb{N}_0^{k-1}.
\end{equation}
In other words, $\rho_n(g_1,\dots,g_k)=(g_1/c_n,\dots,g_{n-1}/c_n,g_{n+1},\dots,g_k)$.
\end{defn}
\begin{rmk}
We specify $k\ge2$ in the definition because there are no corresponding $n$ for which to define $\rho_n$ when $k=1$.
\end{rmk}

\begin{rmk}\label{rmk:removing-first-entry}
The definition of $\rho_n$ allows us to remove the $n$th entry of $G$ for any $n\in\{2, \dots, k\}$. If we wish to remove the first entry, we can apply the permutation $\sigma=(1\;2)$ first and then apply $\rho_2$.
\[\rho_2(\sigma(G))=\rho_2(g_2, g_1, g_3, \dots, g_k)=(\gcd(g_2, g_1), g_3, \dots, g_k).\] Since $\gcd(g_2,g_1)=\gcd(g_1,g_2)$, it follows that $\rho_2(\sigma(G))=\rho_2(G)$. Thus, removal of the first entry is equivalent to removal of the second entry.
\end{rmk}

\begin{defn}
For any $k\ge1$, $G\in\mathbb{N}_0^k$, $g\in \langle G\rangle$, and $m\in\mathbb{N}$ with $\gcd(m, g)=1$, let 
\begin{equation}
\tau_{g,m}(G)=(mG)\times(g)\in\mathbb{N}_0^{k+1}.
\end{equation}
In other words, $\tau_{g,m}(g_1,\dots,g_k)=(mg_1,\dots,mg_k,g)$.
\end{defn}

\begin{ex}
As in Example \ref{ex:question-example}, let $G=(660,550,352,50,201)\in\mathbb{N}_0^5$. We have $c(G)=(6,5,11,2)$.

We can apply $\rho_n$ for any $n\in\{2,3,4,5\}$. Applying $\rho_2$ (and thereby removing $g_2$), we get $\rho_2(G)=(G_1/c_2)\times G_{2,5}=(110,352,50,201)\in\mathbb{N}_0^4$. If we wish to remove $g_1$, we apply the permutation $(1\;2)$ first. Let $H=(1\;2)(G)=(550,660,352,50,201)$. Then $c(H)=(e_2,e_3,e_4,e_5)=(5,6,11,2)$ and 
\[ 
\rho_2((1\;2) (G))=\rho_2(H)=(H_1/e_2)\times H_{2,5}=(110,352,50,201)\in\mathbb{N}_0^4,
\]
which, as expected, is equal to $\rho_2(G)$.

We can apply $\tau_{g,m}$ for any $g\in\langle G\rangle$ and any $m$ with $\gcd(m,g)=1$. Applying $\tau_{251,3}$, we have $\tau_{251,3}(G)=(3G)\times(251)=(1980,1650,1056,150,603,251)\in\mathbb{N}_0^6$.
\end{ex}

Our goal with $\rho_n$ and $\tau_{g,m}$ is to determine the conditions for which that they send telescopic sequences to telescopic sequences. We begin by investigating their basic properties. In Lemma \ref{lem:rho-tau-mults}, we see how $\rho_n$ and $\tau_{g,m}$ act on multiples of sequences. In Lemma \ref{lem:rho-c-vals}, we compute $\gcd(\rho_n(G))$ and $c(\rho_n(G))$. In Lemma \ref{lem:tau-c-vals}, we compute $\gcd(\tau_{g,m}(G))$ and $c(\tau_{g,m}(G))$.

\begin{lemma}\label{lem:rho-tau-mults}
Suppose $G\in\mathbb{N}_0^k$ and $d\in\mathbb{N}$. 
For $n=2, \dots, k$, $\rho_n(dG)=d\rho_n(G)$. 
For any $g\in\langle G\rangle$ and any $m\in\mathbb{N}$ with $\gcd(g, m)=1$, $\tau_{dg,m}(dG)=d\tau_{g,m}(G)$.
\end{lemma}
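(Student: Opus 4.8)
The plan is to prove each of the two claimed identities directly from the definitions of $\rho_n$ and $\tau_{g,m}$, unwinding the notation and using the elementary scaling lemmas already established. Both equalities are statements about sequences in $\mathbb{N}_0^{k-1}$ (resp.\ $\mathbb{N}_0^{k+1}$), so it suffices to check that corresponding entries agree. The only subtlety is that $\rho_n$ depends on $c(G)$ through the factor $c_n$, so before anything else I must verify that the relevant $c$-values are unchanged under scaling by $d$; this is exactly where I will invoke Lemma \ref{lem:c-vals-and-scaling}.

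First I would handle $\rho_n$. Write $c(G)=(c_2,\dots,c_k)$ and $c(dG)=(e_2,\dots,e_k)$. By Lemma \ref{lem:c-vals-and-scaling}, $e_j=c_j$ for all $j$, so in particular $e_n=c_n$, and the ``divide-by-$c_n$'' appearing in the definition of $\rho_n$ is literally the same scalar for both $G$ and $dG$. Now I expand both sides using the explicit entrywise form $\rho_n(g_1,\dots,g_k)=(g_1/c_n,\dots,g_{n-1}/c_n,\,g_{n+1},\dots,g_k)$. The left side $\rho_n(dG)$ applied to $(dg_1,\dots,dg_k)$ yields entries $dg_i/c_n$ for $i<n$ and $dg_i$ for $i>n$; the right side $d\rho_n(G)$ multiplies each entry of $\rho_n(G)$ by $d$, giving $d(g_i/c_n)$ for $i<n$ and $dg_i$ for $i>n$. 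Since $d(g_i/c_n)=dg_i/c_n$ (both are integers because $c_n\mid\gcd(G_{n-1})$ by the remark preceding the definition of $\rho_n$, hence $c_n\mid g_i$ for $i<n$), the two sequences coincide term by term, establishing $\rho_n(dG)=d\rho_n(G)$.

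Next I would treat $\tau_{g,m}$. Here I must first check the operation is even well-defined on both sides: $\tau_{dg,m}(dG)$ requires $dg\in\langle dG\rangle$ and $\gcd(m,dg)=1$, while $\tau_{g,m}(G)$ is given to satisfy $g\in\langle G\rangle$ and $\gcd(m,g)=1$. Since $g\in\langle G\rangle$ implies $dg\in\langle dG\rangle$ by scaling any $\mathbb{N}_0$-linear representation, and since $\gcd(m,g)=1$ forces $\gcd(m,dg)=\gcd(m,dg)$ to be coprime provided we also know $\gcd(m,d)=1$, I should note that the hypotheses guarantee this (the statement supplies $\gcd(g,m)=1$, and in the intended application $d\mid g_1,\dots$ so $m$ is coprime to $d$ as well); I will state the well-definedness as following from these divisibility facts. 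Then the computation is immediate: $\tau_{dg,m}(dG)=(m(dG))\times(dg)=(d(mG))\times(dg)$, whereas $d\tau_{g,m}(G)=d\big((mG)\times(g)\big)=(d(mG))\times(dg)$, using that scaling distributes over the concatenation $\times$. The two expressions are identical.

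The main obstacle, such as it is, is purely bookkeeping rather than mathematical depth: I must be careful that the $c_n$ used inside $\rho_n$ is genuinely the same for $G$ and $dG$ (resolved by Lemma \ref{lem:c-vals-and-scaling}), and that the coprimality condition $\gcd(m,dg)=1$ needed for $\tau_{dg,m}$ to be defined actually holds. Once these two well-definedness points are dispatched, both identities reduce to the observation that multiplication by $d$ commutes with the entrywise operations of dividing by a fixed scalar, appending a term, and concatenating, which is exactly the content of Lemma \ref{lem:scaling-by-m}.
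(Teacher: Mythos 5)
Your two computations are exactly the paper's: for $\rho_n$ you invoke Lemma \ref{lem:c-vals-and-scaling} to see $c(dG)=c(G)$, so the scalar $c_n$ is the same on both sides, and then scaling by $d$ visibly commutes with dividing the first block by $c_n$ and with concatenation; for $\tau$ you expand both sides to $\left(d(mG)\right)\times(dg)$. That part is correct and is essentially the paper's proof.

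However, your well-definedness discussion for $\tau_{dg,m}(dG)$ contains a false claim. You assert that the hypotheses guarantee $\gcd(m,d)=1$, hence $\gcd(m,dg)=1$. They do not: take $G=(1)$, $d=2$, $g=3\in\langle G\rangle$, $m=2$; then $\gcd(g,m)=1$ as hypothesized, but $\gcd(m,dg)=\gcd(2,6)=2$. Nothing in the lemma's hypotheses relates $m$ to $d$, and your parenthetical appeal to ``the intended application'' is not a proof. More importantly, adding the coprimality requirement would defeat the lemma's purpose: in the proof of Lemma \ref{lem:tau-up} this very lemma is used to pass from the legal applications $\tau_{y_j,c_j}$ (where $\gcd(y_j,c_j)=1$) to applications $\tau_{dy_j,c_j}$ in which $\gcd(dy_j,c_j)$ can fail to be $1$ (e.g., $G=(4,2)$, where $d=2$, $y_2=1$, $c_2=2$ forces the step $\tau_{2,2}$). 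The paper's reading, implicit in its one-line proof, is that $\tau_{dg,m}(dG)$ denotes the sequence $(mdG)\times(dg)$ given by the defining formula, and the identity is a formal, entrywise one; the coprimality condition in the definition of $\tau$ is needed for results like Lemma \ref{lem:tau-c-vals} and Proposition \ref{prop:telescopic-operations-tau}, not for this identity. So you should either drop the well-definedness paragraph and treat the identity formally, or note explicitly that the left side is to be interpreted by the formula; as written, your justification is a step that fails.
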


\begin{proof}
Let $c(G)=(c_2, \dots, c_k)$. By Lemma \ref{lem:c-vals-and-scaling}, $c(dG)=c(G)$, so $c(dG)=(c_2, \dots, c_k)$. Then \begin{align*}
\rho_n(dG)
&= \left((dG)_{n-1}/c_n\right)\times \left((dG)_{n,k} \right)\\
&= \left(d(G_{n-1}/c_n)\right)\times \left(d(G_{n,k})\right)\\
&= d\left((G_{n-1}/c_n) \times G_{n,k}\right)\\
&= d\rho_n(G).
\end{align*}

Next,
$\tau_{dg,m}(dG)=(mdG)\times(dg)=d\left((mG)\times(g)\right)=d\tau_{g,m}(G)$.
\end{proof}

\begin{lemma}\label{lem:rho-c-vals}
Let $G\in\mathbb{N}_0^k$ with $c(G)=(c_2, \dots, c_k)$. For any $n=2, \dots, k$, we have $\gcd(\rho_n(G))=\gcd(G)$ and $c(\rho_n(G))=(c_2, \dots, c_{n-1}, c_{n+1}, \dots, c_k)$. 
\end{lemma}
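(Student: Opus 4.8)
The plan is to compute $\gcd(H_i)$ for $H=\rho_n(G)$ at every prefix length $i$, since both assertions of the lemma fall out of this single computation. Write $d_i=\gcd(G_i)$ and $e_i=\gcd(H_i)$, so that $c_j=d_{j-1}/d_j$ and the $j$-th entry of $c(H)$ is $e_{j-1}/e_j$. By definition $H=(g_1/c_n,\dots,g_{n-1}/c_n,g_{n+1},\dots,g_k)$ (the division by $c_n$ being legitimate since $c_n\mid\gcd(G_{n-1})$, as already noted via Lemma \ref{lem:gcd-and-Cik}), so its prefixes behave differently on the two sides of the removed term $g_n$. I would therefore split the computation of $e_i$ into the cases $i\le n-1$ and $i\ge n$.

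For $i\le n-1$, the prefix $H_i$ is exactly $G_i/c_n$. Since $c_n\mid d_{n-1}\mid d_i$, each entry of $G_i$ is divisible by $c_n$, so $e_i=\gcd(G_i)/c_n=d_i/c_n$. The key arithmetic identity is $d_{n-1}/c_n=d_n$, immediate from $c_n=d_{n-1}/d_n$; in particular $e_{n-1}=d_n$. For $i\ge n$, I would use Proposition \ref{prop:basic-props} to write $H_i=(G_{n-1}/c_n)\times(g_{n+1},\dots,g_{i+1})$ and then $e_i=\gcd\bigl(\gcd(G_{n-1}/c_n),\,g_{n+1},\dots,g_{i+1}\bigr)=\gcd(d_n,g_{n+1},\dots,g_{i+1})$. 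Because $d_n=\gcd(g_1,\dots,g_n)$, this reassembles into $\gcd(g_1,\dots,g_{i+1})=d_{i+1}$, giving $e_i=d_{i+1}$ for all $i\ge n$. Taking $i=k-1$ yields $\gcd(H)=e_{k-1}=d_k=\gcd(G)$, which is the first claim.

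For the second claim I would read off $c(H)$ from $e_{j-1}/e_j$ in three ranges. For $2\le j\le n-1$ both $e$-values come from the first case and the factor of $c_n$ cancels, giving $e_{j-1}/e_j=d_{j-1}/d_j=c_j$. For $j\ge n+1$ both come from the second case, giving $d_j/d_{j+1}=c_{j+1}$. The one delicate index is the seam $j=n$, where $e_{n-1}=d_n$ (first case, via $d_{n-1}/c_n=d_n$) meets $e_n=d_{n+1}$ (second case), so that $e_{n-1}/e_n=d_n/d_{n+1}=c_{n+1}$. Concatenating the three ranges yields $c(H)=(c_2,\dots,c_{n-1},c_{n+1},\dots,c_k)$, as claimed.

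The main obstacle is purely the bookkeeping at the seam $j=n$, where the index shift from $c_j$ to $c_{j+1}$ occurs and the two gcd formulas must be matched correctly; I would also check the boundary cases $n=2$ and $n=k$, in which one of the two ranges is empty, to confirm that the stated form of $c(H)$ degenerates as expected.
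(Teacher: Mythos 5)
Your proof is correct, but it takes a genuinely different route from the paper's. The paper first reduces to the case $\gcd(G)=1$, computes $\gcd(H_i)=C_{i,k}/c_n$ for $i<n$ via Lemma \ref{lem:gcd-and-Cik}, then runs an induction on $i\ge n$ that invokes Proposition \ref{prop:explicit-description-telescopic} to write $g_{i+1}=z_{i+1}C_{i+1,k}$ with $\gcd(c_{i+1},z_{i+1})=1$, and finally recovers the general-gcd statement by scaling (Lemma \ref{lem:rho-tau-mults}). You instead work directly with the prefix gcds $d_i=\gcd(G_i)$ for arbitrary $\gcd(G)$: the identity $d_{n-1}/c_n=d_n$ handles the prefixes $i\le n-1$, and for $i\ge n$ the associativity of gcd collapses $e_i=\gcd(d_n,g_{n+1},\dots,g_{i+1})$ to $d_{i+1}$ in one step, with no induction and no appeal to the explicit form of telescopic sequences. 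Your route buys two things: it avoids the reduction to $\gcd(G)=1$ entirely, and it avoids citing Proposition \ref{prop:explicit-description-telescopic}, which is stated as a characterization of \emph{telescopic} sequences even though the lemma makes no telescopic hypothesis (the paper's use of it is harmless, since the factorization $g_i=z_iC_{i,k}$ with $\gcd(z_i,c_i)=1$ follows from Lemma \ref{lem:gcd-and-Cik} alone, but your argument never has to address this point). What the paper's approach buys in exchange is that the quantities $C_{i,k}$ and $z_i$ computed along the way are exactly the ones reused in later proofs (e.g., Proposition \ref{prop:replace-the-multiple}), so the bookkeeping there is already set up. Your seam computation at $j=n$ and the degenerate ranges at $n=2$ and $n=k$ check out as you describe.
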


\begin{proof}
We begin with the case where $\gcd(G)=1$.

Let $H=\rho_n(G)$, so $H=(h_1, \dots, h_{k-1})$ where, for $i=1, \dots, n-1$, we have $h_i=g_i/c_n$, and for $i=n, \dots, k-1$, we have $h_i=g_{i+1}$. Let $c(H)=(e_2, \dots, e_{k-1})$.

We begin by computing $\gcd(H_i)$. For $i<n$, $H_i=G_i/c_n$. Thus $\gcd(H_i)=\gcd\left(G_i/c_n\right)$. By Lemma \ref{lem:gcd-and-Cik}, $\gcd(G_i)=C_{i,k}=c_{i+1}\cdots c_k$. Since $i<n$, $\gcd(G_i)$ contains $c_n$ as a factor. Therefore $\gcd\left(G_i/c_n\right)=C_{i,k}/c_n$.

For $i\ge n$, we wish to show $\gcd(H_i)=C_{i+1,k}$. We proceed with induction on $i$. If $i=n$, $H_n=(G_{n-1}/c_n)\times(g_{n+1})$. By the previous paragraph, $\gcd(G_{n-1}/c_n) = C_{n,k}$. By Proposition \ref{prop:explicit-description-telescopic}, $g_{n+1}=z_{n+1} C_{n+1,k}$ with $\gcd(c_{n+1}, z_{n+1})=1$. Then 
\begin{align*}
\gcd(H_n)
&=\gcd\left((G_{n-1}/c_n)\times (g_{n+1})\right)\\
&=\gcd\left(\gcd(G_{n-1}/c_n), g_{n+1}\right)\\
&=\gcd(C_{n,k}, g_{n+1})\\
&=\gcd(c_{n+1} C_{n+1,k}, z_{n+1} C_{n+1,k})\\
&=C_{n+1,k} \gcd(c_{n+1}, z_{n+1})\\
&=C_{n+1,k}.
\end{align*}

For $i>n$, by Proposition \ref{prop:explicit-description-telescopic} we have $g_{i+2}=z_{i+2}C_{i+2,k}$ with $\gcd(c_{i+2}, z_{i+2})=1$. By induction we assume $\gcd(H_i)=C_{i+1,k}$, so
\begin{align*}
\gcd(H_{i+1})&=\gcd(h_1, \dots, h_{i+1})\\
&=\gcd(\gcd(H_i), g_{i+2})\\
&=\gcd(C_{i+1,k}, z_{i+2}C_{i+2,k})\\
&=\gcd(c_{i+2}C_{i+2,k}, z_{i+2}C_{i+2,k})\\
&=C_{i+2,k}.
\end{align*}
Therefore $\gcd(H_i)=C_{i+1,k}$ for $i>n$. In particular, $\gcd(H)=\gcd(H_{k-1})=C_{k,k}=1$.

Now we compute $e_j$. For $j<n$, $e_j=\gcd(H_{j-1})/\gcd(H_j)=(C_{j-1,k}/c_n)/(C_{j,k}/c_n)=c_j$. For $j=n$, $e_j=\gcd(H_{n-1})/\gcd(H_n)=(C_{n-1,k}/c_n)/C_{n+1,k}=c_{n+1}=c_{j+1}$. For $j>n$, $e_j=\gcd(H_{j-1})/\gcd(H_j)=C_{j,k}/C_{j+1,k}=c_{j+1}$. Therefore $c(H)=(c_2, \dots, c_{n-1}, c_{n+1}, \dots, c_k)$, as desired.

Now, suppose $\gcd(G)=d\ge1$, let $G'=G/d$. Since $c(G)=c(G')$, we have $c(G')=(c_2, \dots, c_k)$. Since $\gcd(G')=1$, we apply the above result to obtain $\gcd(\rho_n(G'))=1$ and $c(\rho_n(G'))=(c_2, \dots, c_{n-1},  c_{n+1}, \dots, c_k)$. By Lemma \ref{lem:rho-tau-mults}, $\rho_n(G)=\rho_n(dG')=d\rho_n(G')$, so $\gcd(\rho_n(G))=\gcd(d\rho_n(G'))=d\gcd(\rho_n(G'))=d$ and 
$c(\rho_n(G))=c(d\rho_n(G'))=c(\rho_n(G'))=(c_2, \dots, c_{n-1}, c_{n+1}, \dots, c_k)$.
\end{proof}

\begin{lemma}\label{lem:tau-c-vals}
Let $G\in\mathbb{N}_0^k$ with $c(G)=(c_2, \dots, c_k)$. For any $g\in\langle G\rangle$ and any $m\in\mathbb{N}$ with $\gcd(g, m)=1$, we have $\gcd(\tau_{g,m}(G))=\gcd(G)$ and $c(\tau_{g,m}(G))=c(G)\times(m)$.
\end{lemma}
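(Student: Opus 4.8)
The plan is to compute both quantities directly from the definition $\tau_{g,m}(G)=(mG)\times(g)$, treating the gcd computation and the $c$-vector computation as two separate tasks. Throughout, write $H=\tau_{g,m}(G)=(mg_1,\dots,mg_k,g)$, so $H$ has length $k+1$ with $h_i=mg_i$ for $1\le i\le k$ and $h_{k+1}=g$. The key structural observation is that the initial segment $H_k$ of $H$ is exactly $mG$, and only the final entry $g$ is new.

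\medskip

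For the gcd claim, first compute $\gcd(H_i)$ for $1\le i\le k$. Since $H_i=(mg_1,\dots,mg_i)=m(G_i)$, Lemma \ref{lem:scaling-by-m} gives $\gcd(H_i)=m\gcd(G_i)$. The only remaining step is the last one: by Proposition \ref{prop:basic-props}(1),
\[
\gcd(H)=\gcd(H_{k+1})=\gcd\bigl(\gcd(H_k),\,g\bigr)=\gcd\bigl(m\gcd(G),\,g\bigr).
\]
The hypothesis $\gcd(g,m)=1$ lets me strip the factor $m$: since $m$ shares no common factor with $g$, any common divisor of $m\gcd(G)$ and $g$ must divide $\gcd(G)$, and conversely $\gcd(G)$ divides both $m\gcd(G)$ and (because $g\in\langle G\rangle$ forces $\gcd(G)\mid g$) also $g$. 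Hence $\gcd(H)=\gcd(m\gcd(G),g)=\gcd(G)$, as claimed. I would state the divisibility $\gcd(G)\mid g$ explicitly, since $g\in\langle G\rangle$ is precisely what guarantees it.

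\medskip

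For the $c$-vector claim, write $c(H)=(e_2,\dots,e_{k+1})$ and compute each $e_j=\gcd(H_{j-1})/\gcd(H_j)$. For $2\le j\le k$, both indices fall in the range where $\gcd(H_i)=m\gcd(G_i)$, so
\[
e_j=\frac{m\gcd(G_{j-1})}{m\gcd(G_j)}=\frac{\gcd(G_{j-1})}{\gcd(G_j)}=c_j,
\]
the factor $m$ cancelling cleanly. The genuinely new computation is the final ratio $e_{k+1}=\gcd(H_k)/\gcd(H_{k+1})$. By the gcd work above, $\gcd(H_k)=m\gcd(G)$ and $\gcd(H_{k+1})=\gcd(G)$, so $e_{k+1}=m\gcd(G)/\gcd(G)=m$. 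Assembling these gives $c(H)=(c_2,\dots,c_k,m)=c(G)\times(m)$, as desired.

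\medskip

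I do not anticipate a serious obstacle here; this lemma is computational and parallels Lemma \ref{lem:rho-c-vals}. The only point requiring care is the interplay of the two hypotheses on $g$: the condition $\gcd(g,m)=1$ is what makes the factor of $m$ disappear in the last gcd, while the condition $g\in\langle G\rangle$ is what ensures $\gcd(G)\mid g$ so that $\gcd(\gcd(G),g)=\gcd(G)$. Both are needed, and I would flag where each is used. Note that this lemma deliberately does not assert that $\tau_{g,m}$ preserves telescopicness—that presumably comes in a subsequent result—so the proof need only track $\gcd$ and $c(\cdot)$, not verify the telescopic condition $c_jh_j\in\langle H_{j-1}\rangle$.
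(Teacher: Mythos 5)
Your proof is correct and follows essentially the same route as the paper: both compute $\gcd(H_i)=m\gcd(G_i)$ on the initial segment, then evaluate $\gcd(H)=\gcd(m\gcd(G),g)$ by using $\gcd(g,m)=1$ to strip the factor $m$ and $g\in\langle G\rangle$ (hence $\gcd(G)\mid g$) to finish, giving $e_{k+1}=m$. The only cosmetic difference is that the paper cites Lemma \ref{lem:c-vals-and-scaling} and Lemma \ref{lem:shortened-telescopic} for the equalities $e_j=c_j$, $2\le j\le k$, where you redo the (identical) cancellation by hand.
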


\begin{proof}
Let $H=\tau_{g,m}(G)$, and let $c(H)=(e_2, \dots, e_{k+1})$. We'll compute $e_j$ for $j=2, \dots, k$, and for $j=k+1$.

We have $H_k=mG$. By Lemma \ref{lem:c-vals-and-scaling}, $c(mG)=c(G)$. By Lemma \ref{lem:shortened-telescopic}, $c(H_k)=(e_2,\dots,e_k)$. Thus $e_j=c_j$ for $j=2,\dots,k$.

For $j=k+1$, first note that
\begin{align*}
\gcd(H)&=\gcd((mG)\times(g))\\
&=\gcd(\gcd(mG), g)\\
&=\gcd(m\gcd(G), g)\\
&=\gcd(\gcd(G), g)\\
&=\gcd(G),
\end{align*}
where the fourth equality follows from $\gcd(g,m)=1$ and the final equality follows from the fact that $g\in\langle G\rangle$, which implies that $\gcd(G)\mid g$. 
Then 
\begin{align*}
e_{k+1}&=\gcd(H_k)/\gcd(H_{k+1})\\
&=\gcd(mG_k)/\gcd(H)\\
&=m\gcd(G)/\gcd(G)\\
&=m.\end{align*}

Combining the results for $j=2,\dots,k$ and $j=k+1$, we have $c(H)=(c_2, \dots, c_k, m)=c(G)\times(m)$, as desired.
\end{proof}

With appropriate parameters, $\rho_n$ and $\tau_{g,m}$ are inverses of each other as the following lemma shows.

\begin{lemma}\label{lem:telescopic-operations-composition}
Let $G=(g_1, \dots, g_k)\in\mathbb{N}_0^k$ with $c(G)=(c_2, \dots, c_k)$, $g\in\langle G\rangle$, and $m\in\mathbb{N}$ with $\gcd(g, m)=1$. Then $\tau_{g_k,c_k}(\rho_k(G))=G$ and $\rho_{k+1}(\tau_{g,m}(G))=G$.
\end{lemma}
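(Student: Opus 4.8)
The plan is to prove both identities by unwinding the definitions of $\rho_n$ and $\tau_{g,m}$ directly, since each operation exactly reverses the arithmetic performed by the other. The only ingredient beyond bookkeeping is the last coordinate of $c(\tau_{g,m}(G))$, which Lemma~\ref{lem:tau-c-vals} identifies as $m$.

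First I would establish $\tau_{g_k,c_k}(\rho_k(G))=G$. Specializing the definition of $\rho_n$ to $n=k$, the explicit form $\rho_n(g_1,\dots,g_k)=(g_1/c_n,\dots,g_{n-1}/c_n,g_{n+1},\dots,g_k)$ has no trailing terms, so $\rho_k(G)=(g_1/c_k,\dots,g_{k-1}/c_k)$. Each entry is a non-negative integer since $c_k\mid C_{k-1,k}$ and $C_{k-1,k}\mid\gcd(G_{k-1})$ by Lemma~\ref{lem:gcd-and-Cik}, whence $c_k\mid g_i$ for $i<k$. Applying $\tau_{g_k,c_k}$ then multiplies each of these $k-1$ entries by $c_k$ and appends $g_k$, yielding $(g_1,\dots,g_{k-1},g_k)=G$.

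Next I would establish $\rho_{k+1}(\tau_{g,m}(G))=G$. Put $H=\tau_{g,m}(G)=(mg_1,\dots,mg_k,g)\in\mathbb{N}_0^{k+1}$, which is defined since $g\in\langle G\rangle$ and $\gcd(g,m)=1$ by hypothesis. Evaluating $\rho_{k+1}$ requires the $(k+1)$st entry of $c(H)$; Lemma~\ref{lem:tau-c-vals} gives $c(H)=c(G)\times(m)$, so that entry equals $m$. Specializing $\rho_n$ to $n=k+1$ again leaves no trailing terms, so $\rho_{k+1}(H)=H_k/m=(mg_1/m,\dots,mg_k/m)=(g_1,\dots,g_k)=G$.

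I do not expect a serious obstacle, as both identities reduce to cancelling a scaling factor. The one point requiring attention is keeping the parameters matched: $\tau$ must be applied with exactly the factor $c_k$ that $\rho_k$ divided out, while $\rho_{k+1}$ must divide by exactly the new final $c$-value $m$ produced by $\tau$, which is precisely why Lemma~\ref{lem:tau-c-vals} is invoked. I would also confirm that $\tau_{g_k,c_k}$ is legitimately applicable to $\rho_k(G)$ (that is, that the defining hypotheses $g_k\in\langle\rho_k(G)\rangle$ and $\gcd(c_k,g_k)=1$ hold in the cases of interest), as this is the only place where more than purely formal manipulation is involved.
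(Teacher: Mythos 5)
Your argument is correct and is essentially the paper's own proof: both identities are verified by unwinding the definitions, with Lemma~\ref{lem:tau-c-vals} supplying the final entry $m$ of $c(\tau_{g,m}(G))$ needed to evaluate $\rho_{k+1}$.

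One warning about your closing remark: the confirmation that $\tau_{g_k,c_k}$ is ``legitimately applicable'' to $\rho_k(G)$ cannot be carried out in general, because the lemma does not assume $G$ is telescopic. For $G=(3,4,5)$ we have $c(G)=(3,1)$ and $\rho_3(G)=(3,4)$, but $g_3=5\notin\langle 3,4\rangle$; and for $G=(4,2)$ we have $\gcd(c_2,g_2)=2\ne 1$. The first identity must therefore be read formally, as the paper does: $\tau_{g_k,c_k}(\rho_k(G))=\bigl(c_k\,G_{k-1}/c_k\bigr)\times(g_k)=G$ is a computation with the defining formula $(mG)\times(g)$, valid whether or not the side conditions in the definition of $\tau$ hold. (The hypotheses $g\in\langle G\rangle$ and $\gcd(g,m)=1$ in the statement serve only to make $\tau_{g,m}(G)$ in the second identity conform to the definition.) So that final verification should be dropped rather than pursued; if you insisted on it, the proof would fail for non-telescopic $G$ even though the identity itself is true.
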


\begin{proof}
Since $\rho_k(G)=G_{k-1}/c_k$,
\begin{align*}
\tau_{g_k,c_k}(\rho_k(G)) 
&= \tau_{g_k,c_k}\left(G_{k-1}/c_k\right)\\
& = \left(c_k G_{k-1}/c_k\right)\times(g_k)\\
&= G_{k-1}\times(g_k)\\
&= G.
\end{align*}

By Lemma \ref{lem:tau-c-vals}, $c(\tau_{g,m}(G))=(c_2, \dots, c_k, m)$, so 
\begin{align*}
\rho_{k+1}(\tau_{g,m}(G))
& =\rho_{k+1}\left((mG)\times(g)\right)\\
& =\rho_{k+1}\left((mg_1, \dots, mg_k, g)\right) \\
& =(mg_1, \dots, mg_k)/m\\
& =(g_1, \dots, g_k)\\
&=G.
\end{align*}
\end{proof}

\subsection{Telescopicness-preserving operations}
Now that we have developed some of the properties of $\rho_n$ and $\tau_{g,m}$, we will see how they map telescopic sequences to telescopic sequences. The goal is to then construct a map between any two telescopic sequences with the same greatest common divisor using some sequence of these maps composed together.

We first show that $\rho_n$ preserves telescopicness.

\begin{prop}\label{prop:telescopic-operations-rho}
For $k\ge2$, let $G\in\mathbb{N}_0^k$. For $n=2, \dots, k$, if $G$ is telescopic, then $\rho_n(G)$ is telescopic.
\end{prop}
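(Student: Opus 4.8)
The plan is to prove that $\rho_n$ preserves telescopicness by reducing everything to the explicit description of the entries of a telescopic sequence given in Proposition \ref{prop:explicit-description-telescopic} (for $\gcd(G)=1$) and then handling general $\gcd$ by the usual scaling argument via Lemma \ref{lem:rho-tau-mults} and Lemma \ref{lem:c-vals-and-scaling}. First I would reduce to the case $\gcd(G)=1$: writing $d=\gcd(G)$ and $G'=G/d$, we have $\rho_n(G)=d\,\rho_n(G')$ by Lemma \ref{lem:rho-tau-mults}, and $G$ is telescopic iff $G'$ is (Lemma \ref{lem:c-vals-and-scaling}), while $\rho_n(G')$ is telescopic iff $\rho_n(G)=d\,\rho_n(G')$ is (again Lemma \ref{lem:c-vals-and-scaling}). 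So it suffices to treat $\gcd(G)=1$.

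Assuming $\gcd(G)=1$, set $H=\rho_n(G)=(h_1,\dots,h_{k-1})$, so $h_i=g_i/c_n$ for $i<n$ and $h_i=g_{i+1}$ for $i\ge n$. By Lemma \ref{lem:rho-c-vals} I already know $c(H)=(c_2,\dots,c_{n-1},c_{n+1},\dots,c_k)$ and $\gcd(H)=1$. The goal is to verify the telescopic condition $e_j h_j\in\langle H_{j-1}\rangle$ for each index $j$, where I write $c(H)=(e_2,\dots,e_{k-1})$. I would split into two ranges. For the low range $j<n$, the relation $c_j g_j\in\langle G_{j-1}\rangle$ from telescopicness of $G$ can simply be divided through by $c_n$ (using $c_n\mid\gcd(G_{j-1})$, which holds because $c_n\mid C_{j-1,k}=\gcd(G_{j-1})$ for $j-1<n$, as noted just before the definition of $\rho_n$); since $e_j=c_j$ here this gives exactly $e_j h_j\in\langle H_{j-1}\rangle$. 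For the high range $j\ge n$, the entries of $H$ past position $n-1$ are literally entries of $G$ shifted by one, and the $\langle H_{j-1}\rangle$ involves the scaled initial block $G_{n-1}/c_n$ together with the unscaled tail; here I expect to invoke the explicit form $g_i=z_iC_{i,k}$ and the membership $z_i\in\langle z_jC_{j,i-1}:j<i\rangle$ from Proposition \ref{prop:explicit-description-telescopic} to rewrite the original telescopic memberships in terms of the $h_i$.

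The main obstacle, and the step I would spend the most care on, is the single index $j=n$ (i.e., verifying the telescopic condition at the seam where the scaled block $G_{n-1}/c_n$ meets the first surviving tail entry $g_{n+1}$). Here $e_n=c_{n+1}$ and $h_n=g_{n+1}$, so I must show $c_{n+1}g_{n+1}\in\langle h_1,\dots,h_{n-1}\rangle=\langle G_{n-1}/c_n\rangle$, whereas telescopicness of $G$ only directly gives $c_{n+1}g_{n+1}\in\langle G_n\rangle=\langle g_1,\dots,g_n\rangle$, a module that still contains the removed generator $g_n$ and is scaled differently. Closing this gap is where I would use the explicit description most forcefully: write $c_{n+1}g_{n+1}=z_{n+1}C_{n,k}$ and use $z_{n+1}\in\langle z_jC_{j,n}:j<n+1\rangle$ together with $g_j/c_n=z_jC_{j,k}/c_n=z_jC_{j,n}\cdot(C_{n,k}/c_n)\cdot c_n/\!\cdots$, keeping careful track of the factor $C_{n,k}$ versus $C_{n,k}/c_n$ so that the coefficient of the removed generator $g_n$ can be absorbed into the surviving ones. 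This is essentially a bookkeeping argument on the $z_i$ and the products $C_{j,i}$, and the delicate point is ensuring that dividing by $c_n$ keeps all coefficients in $\mathbb{N}_0$.

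Once the seam case and the two ranges are established, concatenating the verified memberships gives $e_j h_j\in\langle H_{j-1}\rangle$ for all $j$, so $H=\rho_n(G)$ is telescopic; the general-$\gcd$ case then follows by the scaling reduction from the first paragraph, completing the proof.
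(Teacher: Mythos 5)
Your plan is correct in structure and can be completed, but at the crucial step it takes a heavier route than the paper does. The low range $j<n$ is handled exactly as in the paper: divide the relation $c_jg_j\in\langle G_{j-1}\rangle$ through by $c_n$. The difference is at the seam $j=n$ and the tail $j>n$. The paper's proof rests on one observation that you did not isolate: telescopicness of $G$ at index $n$ itself gives $c_ng_n\in\langle G_{n-1}\rangle$, and since $c_n\mid\gcd(G_{n-1})$, dividing that single relation by $c_n$ yields $g_n\in\langle G_{n-1}/c_n\rangle=\langle H_{n-1}\rangle$. From this, $\langle G_i\rangle\subseteq\langle H_{i-1}\rangle$ for every $i\ge n$ (each $g_j$ with $j<n$ equals $c_nh_j$, the removed $g_n$ lies in $\langle H_{n-1}\rangle$, and the remaining entries of $G_i$ literally appear in $H_{i-1}$), so every tail condition $e_jh_j=c_{j+1}g_{j+1}\in\langle G_j\rangle\subseteq\langle H_{j-1}\rangle$ follows at once, with no $z_i$'s needed. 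Your plan instead invokes Proposition \ref{prop:explicit-description-telescopic}: writing $c_{n+1}g_{n+1}=z_{n+1}C_{n,k}$ and expanding $z_{n+1}\in\langle z_jC_{j,n}: 1\le j\le n\rangle$ does work, but the troublesome term is the one involving $z_n$, namely a multiple of $z_nC_{n,k}=g_n$, and absorbing it requires expanding $z_n\in\langle z_jC_{j,n-1}: j<n\rangle$ and multiplying by $C_{n,k}$, which lands it in $\langle h_1,\dots,h_{n-1}\rangle$; this absorption is exactly the paper's observation $g_n\in\langle G_{n-1}/c_n\rangle$ in disguise. So your bookkeeping, once completed, necessarily passes through the same fact, and isolating that fact first makes the $z_i$ machinery --- and your preliminary reduction to $\gcd(G)=1$, which you only need in order to quote Proposition \ref{prop:explicit-description-telescopic} --- unnecessary. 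Note also that, as written, your seam case is a sketch rather than a proof (``I expect to invoke\dots''), so you should either carry out the $C_{j,i}$ bookkeeping explicitly (it does close, with all coefficients in $\mathbb{N}_0$) or, better, replace it with the one-line observation above.
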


\begin{proof}
Let $H=\rho_n(G)$. Then $H=(h_1, \dots h_{k-1})$ where $h_i=g_i/c_n$ for $i<n$, and $h_i=g_{i+1}$ for $i\ge n$. Let $c(H)=(e_2, \dots, e_{k-1})$. By Lemma \ref{lem:rho-c-vals}, $c(H)=(c_2, \dots, c_{n-1}, c_{n+1}, \dots, c_k)$.
Since $G$ is telescopic, $c_jg_j\in\langle G_{j-1}\rangle$ for $j=2, \dots, k$. We need to show $e_jh_j\in\langle H_{j-1}\rangle$ for $j=2, \dots, k-1$.

For $i=1, \dots, n-1$, $H_i=G_i/c_n$, so for $j=2, \dots, n-1$, $e_j=c_j$. Since $G$ is telescopic, $c_jg_j\in\langle G_{j-1}\rangle$. Thus $e_jh_j=c_jg_j/c_n\in\langle G_{j-1}/c_n\rangle=\langle H_{j-1}\rangle$.

Next, we have $c_ng_n\in\langle G_{n-1}\rangle$. Since $c_n\mid\gcd(G_{n-1})$, $g_n\in\langle G_{n-1}/c_n\rangle$, so $\langle G_{n-1}/c_n\rangle=\langle (G_{n-1}/c_n)\times(g_n)\rangle$. For $i=n, \dots, k-1$, $\langle G_i\rangle\subset\langle (G_{n-1}/c_n) \times G_{n-1,i}\rangle= \langle (G_{n-1}/c_n) \times G_{n,i}\rangle=\langle H_{i-1}\rangle$. For $j=n, \dots, k-1$,  we have $e_jh_j=c_{j+1}g_{j+1}\in\langle G_j\rangle\subset\langle H_{j-1}\rangle$.

Since $e_jh_j\in\langle H_{j-1}\rangle$ for $j=2, \dots, k-1$, $H$ is telescopic.
\end{proof}

\begin{rmk}
Unfortunately, Proposition \ref{prop:telescopic-operations-rho} is not an ``if and only if'' statement. Since all sequences of length 2 are telescopic, we can illustrate this with a sequence of length 3 that is not telescopic. For example, consider the sequence $G=(3,4,5)$. We have that $c(G)=(c_2,c_3)=(3,1)$. Since $1\cdot 5\not\in\langle 3,4\rangle$, $G$ is not telescopic. However, $\rho_2(G)=(1,5)$ and $\rho_3(G)=(3,4)$, and both are telescopic sequences.
\end{rmk}

Next, we show that, with proper parameters, $\tau_{g,m}$ preserves telescopicness.

\begin{prop}\label{prop:telescopic-operations-tau}
For $k\in\mathbb{N}$, let $G\in\mathbb{N}_0^k$, $g\in\langle G\rangle$, and $m\in\mathbb{N}$ with $\gcd(g, m)=1$. 
Then $G$ is telescopic if and only if $\tau_{g,m}(G)$ is telescopic.
\end{prop}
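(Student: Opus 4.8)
The plan is to prove both directions by directly checking the telescopic condition $e_j h_j \in \langle H_{j-1}\rangle$ for $H=\tau_{g,m}(G)=(mg_1,\dots,mg_k,g)$, leveraging the computation of $c(H)$ already established in Lemma \ref{lem:tau-c-vals}. By that lemma we know $c(H)=(c_2,\dots,c_k,m)$, so the last entry of $c(H)$ is $m$ and the first $k-1$ entries agree with $c(G)$. This splits the verification into two regimes: the indices $j=2,\dots,k$, where $h_j=mg_j$ and $e_j=c_j$, and the final index $j=k+1$, where $h_{k+1}=g$ and $e_{k+1}=m$.

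For the forward direction, assume $G$ is telescopic. First I would handle $j=2,\dots,k$: since $G$ is telescopic, $c_j g_j \in \langle G_{j-1}\rangle$, and multiplying the witnessing relation through by $m$ gives $c_j(mg_j)\in\langle mG_{j-1}\rangle=\langle H_{j-1}\rangle$, which is exactly $e_j h_j\in\langle H_{j-1}\rangle$. For the final index $j=k+1$, I need $e_{k+1}h_{k+1}=mg\in\langle H_k\rangle=\langle mG\rangle$. This follows because $g\in\langle G\rangle$ by hypothesis, so $mg\in\langle mG\rangle$ directly. Having verified $e_j h_j\in\langle H_{j-1}\rangle$ for all $j=2,\dots,k+1$, I conclude $H$ is telescopic.

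For the reverse direction, I would use the inverse relationship from Lemma \ref{lem:telescopic-operations-composition}, which states $\rho_{k+1}(\tau_{g,m}(G))=G$. If $\tau_{g,m}(G)$ is telescopic, then applying Proposition \ref{prop:telescopic-operations-rho} (with $n=k+1$, which is valid since $\tau_{g,m}(G)$ has length $k+1\ge 2$) shows that $\rho_{k+1}(\tau_{g,m}(G))=G$ is telescopic. This cleanly closes the equivalence without re-deriving anything.

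The main obstacle I anticipate is purely bookkeeping rather than conceptual: I must be careful that the telescopic condition for $\tau_{g,m}(G)$ uses $\langle H_{j-1}\rangle$ with $H_{j-1}=(mg_1,\dots,mg_{j-1})$ for $j\le k+1$, and that scaling a generating relation by $m$ correctly transports membership from $\langle G_{j-1}\rangle$ to $\langle mG_{j-1}\rangle$. The hypothesis $\gcd(g,m)=1$ plays no role in the forward telescopic check itself — it was needed earlier to ensure $\gcd(\tau_{g,m}(G))=\gcd(G)$ in Lemma \ref{lem:tau-c-vals} so that $c(H)$ is well-defined — so I would simply note that the coprimality condition guarantees $\tau_{g,m}(G)$ is a legitimate sequence to which $c(\cdot)$ applies, and otherwise the verification is a routine transfer of the defining relations under multiplication by $m$.
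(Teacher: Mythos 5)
Your proof is correct and follows essentially the same route as the paper: the forward direction verifies $e_jh_j\in\langle H_{j-1}\rangle$ directly using $c(H)=(c_2,\dots,c_k,m)$ from Lemma \ref{lem:tau-c-vals}, and the reverse direction uses $\rho_{k+1}(\tau_{g,m}(G))=G$ together with Proposition \ref{prop:telescopic-operations-rho}. If anything, your handling of the indices $j=2,\dots,k$ is slightly more careful than the paper's (which writes $e_jh_j=c_jg_j$ rather than $e_jh_j=c_j(mg_j)\in\langle mG_{j-1}\rangle$), and your closing remark is accurate so long as one notes that $\gcd(g,m)=1$ does enter the forward check indirectly, since it is what guarantees $e_{k+1}=m$ in Lemma \ref{lem:tau-c-vals}.
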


\begin{proof}
($\implies$) 
Let $H=\tau_{g,m}(G)$, so $H=(h_1, \dots, h_{k+1})$ with $h_i=mg_i$ for $i=1, \dots, k$, and $h_{k+1}=g$. Let $c(H)=(e_2, \dots, e_{k+1})$. By Lemma \ref{lem:tau-c-vals}, $c(H)=(c_2, \dots, c_k, m)$. Since $G$ is telescopic, $c_jg_j\in\langle G_{j-1}\rangle$ for $j=2, \dots, k$. We need to show $e_jh_j\in\langle H_{j-1}\rangle$ for $j=2, \dots, k+1$.

For $j=2, \dots, k$, $e_jh_j=c_jg_j\in\langle G_{j-1}\rangle=\langle H_{j-1}\rangle$. For $j=k+1$, $e_jh_j=mg$. Since $g\in\langle G\rangle$, $mg\in\langle mG\rangle=\langle H_k\rangle$. Thus $e_{k+1}h_{k+1}\in\langle H_k\rangle$. Therefore $e_jh_j\in\langle H_{j-1}\rangle$ for $j=2, \dots, k+1$, so $H$ is telescopic.

($\impliedby$) 
For $G=(g_1,\dots,g_k)$, if $\tau_{g,m}(G)\in\mathbb{N}_0^{k+1}$ is telescopic, then $\rho_{k+1}(\tau_{g,m}(G))$ is also telescopic. Since $\rho_{k+1}(\tau_{g,m})(G)=\rho_{k+1}((mG)\times(g))=G$, and $\rho_{n}$ maps telescopic sequences to telescopic sequences, we conclude that $G$ is telescopic.
\end{proof}

Now that we know $\rho_n$ and $\tau_{g,m}$ send telescopic sequences to telescopic sequences (using appropriate parameters with $\tau_{g,m}$), in Lemma \ref{lem:rho-down} we will construct a map from $\mathbb{N}_0^k$ to $\mathbb{N}_0$ using a sequence of $\rho_n$ functions which sends $G$ to $\gcd(G)$. Following this, in Lemma \ref{lem:tau-up} we will construct a map from $\mathbb{N}_0$ to $\mathbb{N}_0^k$ using a sequence of $\tau_{g,m}$ functions which sends $\gcd(G)$ to $G$.

\begin{lemma}\label{lem:rho-down}
For any $G\in\mathbb{N}_0^k$ (telescopic or not), let 
\begin{equation}
R_G=\rho_2\circ\rho_3\circ\dots\circ\rho_k:\mathbb{N}_0^k\to\mathbb{N}_0.
\end{equation}
Then
$R_G(G)=\gcd(G)$.
\end{lemma}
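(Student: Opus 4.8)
The plan is to prove that $R_G(G) = \gcd(G)$ by tracking how the composition $\rho_2 \circ \rho_3 \circ \cdots \circ \rho_k$ successively reduces the length of the sequence while preserving the gcd at each stage. The key observation is that each $\rho_n$ removes exactly one entry (reducing length by one) and, crucially, by Lemma \ref{lem:rho-c-vals} we have $\gcd(\rho_n(G)) = \gcd(G)$ for every applicable $n$. So the gcd is invariant under each individual application, and the only real content is to verify that after applying all $k-1$ of these maps we land in $\mathbb{N}_0^1 = \mathbb{N}_0$ with the single remaining entry equal to $\gcd(G)$.

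First I would make the indexing precise. Applying the maps from right to left, set $G^{(k)} = G$ and $G^{(i-1)} = \rho_i(G^{(i)})$ for $i = k, k-1, \dots, 2$, so that $G^{(i)} \in \mathbb{N}_0^i$ and $R_G(G) = G^{(1)}$. I need to confirm that $\rho_i$ is well-defined at each step: $\rho_i$ requires a sequence of length at least $i$, and indeed $G^{(i)}$ has length exactly $i$, so we are always applying $\rho_i$ to a length-$i$ sequence, removing its $i$th entry. The remaining task is to identify the single entry of $G^{(1)}$.

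For the base case of the final entry, I would track the gcd explicitly. By repeated application of Lemma \ref{lem:rho-c-vals}, $\gcd(G^{(i)}) = \gcd(G^{(i+1)}) = \cdots = \gcd(G^{(k)}) = \gcd(G)$ for all $i$. In particular $\gcd(G^{(1)}) = \gcd(G)$. But $G^{(1)}$ is a single-entry sequence, say $G^{(1)} = (a)$, and the gcd of a one-element sequence is just that element, so $a = \gcd(G^{(1)}) = \gcd(G)$. This gives the result directly, sidestepping any need to chase the actual arithmetic of the $\rho_n$ operations through the composition.

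The only subtlety — and the one step I would be most careful about — is the well-definedness chain: I must ensure each intermediate $\rho_i$ is legitimately applicable, which requires the input to have length $\ge i \ge 2$ (since $\rho_i$ is only defined for $i \ge 2$). Since $G^{(i)}$ has length exactly $i$ and we apply $\rho_i$, the condition $i \ge 2$ holds for every map in the composition, and the length-$i$ requirement is met with equality at each stage. No genuine obstacle arises here; the gcd-invariance from Lemma \ref{lem:rho-c-vals} does all the heavy lifting, and the length bookkeeping is the only thing requiring attention. I would not need telescopicness of $G$ anywhere, consistent with the lemma's hypothesis that $G$ is arbitrary.
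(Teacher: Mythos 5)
Your proof is correct, and it takes a genuinely different (and slicker) route than the paper's. The paper argues computationally: it tracks the actual entries through the composition, using the $c$-value half of Lemma \ref{lem:rho-c-vals} to see that each application divides the surviving prefix by the next $c_j$, so that $(\rho_2\circ\cdots\circ\rho_k)(G)=g_1/(c_2c_3\cdots c_k)=g_1/C_{1,k}$, and then it invokes Lemma \ref{lem:gcd-and-Cik} to identify $g_1/C_{1,k}$ with $\gcd(G)$. You instead track only the invariant: each $\rho_i$ preserves the gcd by the gcd half of Lemma \ref{lem:rho-c-vals}, and a one-term sequence equals its own gcd, so you never compute any intermediate entries and never need Lemma \ref{lem:gcd-and-Cik}. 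Both proofs rest on Lemma \ref{lem:rho-c-vals} (which, as stated, carries no telescopic hypothesis, so your use of it for arbitrary $G$ is legitimate), but on different halves of its conclusion: your approach buys brevity and avoids all arithmetic bookkeeping, while the paper's buys an explicit closed form for every intermediate sequence, which is more information than the statement requires. The one point worth making explicit in your write-up is that each intermediate sequence still has positive first entry ($\rho_i$ sends $g_1\mapsto g_1/c_i$ for $i\ge3$, and $\rho_2$ produces $\gcd(g_1,g_2)$ as its first entry), so the $c$-values, and hence each successive $\rho_i$, remain well defined at every stage; this is the same standing assumption ($g_1>0$, Remark \ref{rmk:first-term-nonzero}) on which the paper's proof also silently relies.
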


\begin{proof}
If $k=1$, then $R_G$ is the identity map. Since $\gcd(g_1)=g_1$, $R_G(G)=\gcd(G)$.

Now, suppose $k>1$. Let $c(G)=(c_2, \dots, c_k)$. Then $\rho_k(G)=G_{k-1}/c_k$ and $c(\rho_k(G))=(c_2, \dots, c_{k-1})$. Then $\rho_{k-1}(\rho_k(G))=(G_{k-2}/c_k)/c_{k-1}$ and $c(\rho_{k-1}(\rho_k(G)))=(c_2, \dots, c_{k-2})$. Continuing in this way we find \[(\rho_2\circ\rho_3\circ\dots\circ\rho_k)(G)=
G_1/(c_k c_{k-1} \cdots c_2)=g_1/C_{1,k}=\gcd(G),\]
by Lemma \ref{lem:gcd-and-Cik}.
\end{proof}

\begin{lemma}\label{lem:tau-up}
For $G\in\mathbb{N}_0^k$ a telescopic sequence with $c(G)=(c_2, \dots, c_k)$ and $z_i=g_i/C_{i,k}$ for $i=1, \dots, k$, let 
\begin{equation}
T_G=\tau_{z_k,c_k}\circ \tau_{z_{k-1},c_{k-1}}\circ\dots\circ\tau_{z_2,c_2}:\mathbb{N}_0\to\mathbb{N}_0^k.
\end{equation}
Then $T_G(\gcd(G))=G$.
\end{lemma}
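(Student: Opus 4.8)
The plan is to show that $T_G$ undoes, step by step, the chain of $\rho$'s that constitutes the map $R_G$ from Lemma \ref{lem:rho-down}; since $R_G(G)=\gcd(G)$, feeding $\gcd(G)$ back through the reversed chain should return $G$. Concretely, I would set $G^{(k)}=G$ and define the partial reductions $G^{(i-1)}=\rho_i(G^{(i)})$ for $i=k,k-1,\dots,2$, so that $G^{(i)}=\rho_{i+1}\circ\cdots\circ\rho_k(G)$ and, by Lemma \ref{lem:rho-down}, $G^{(1)}=R_G(G)=\gcd(G)$. Each $G^{(i)}$ is a sequence of length $i$.

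First I would record the structure of these intermediate sequences. By Proposition \ref{prop:telescopic-operations-rho} every $G^{(i)}$ is telescopic, and by Lemma \ref{lem:rho-c-vals} each application of $\rho_n$ preserves the gcd and deletes exactly one entry of the $c$-vector; tracking the indices gives $\gcd(G^{(i)})=\gcd(G)$ and $c(G^{(i)})=(c_2,\dots,c_i)$, so the last coordinate of $c(G^{(i)})$ is $c_i$. A short downward induction using the explicit form of $\rho_i$ (it divides the first $i-1$ entries by the last $c$-value $c_i$ and drops the $i$th entry) then shows $G^{(i)}=G_i/C_{i,k}$; in particular, by Lemma \ref{lem:gcd-and-Cik}, the last entry of $G^{(i)}$ equals $g_i/C_{i,k}=z_i$. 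This is essentially the only bookkeeping in the argument, and it is where I expect the main (though modest) obstacle to lie: one must keep the index shifts straight so that the last entry and the last $c$-value of $G^{(i)}$ come out to be exactly $z_i$ and $c_i$, respectively.

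With this in hand the lemma follows quickly. Applying the first identity of Lemma \ref{lem:telescopic-operations-composition} to the length-$i$ telescopic sequence $G^{(i)}$ (whose last entry is $z_i$ and whose last $c$-value is $c_i$) gives $\tau_{z_i,c_i}(\rho_i(G^{(i)}))=G^{(i)}$, that is, $\tau_{z_i,c_i}(G^{(i-1)})=G^{(i)}$ for $i=2,\dots,k$. Starting from $G^{(1)}=\gcd(G)$ and applying these identities in order,
\[
T_G(\gcd(G))=\tau_{z_k,c_k}\circ\cdots\circ\tau_{z_2,c_2}(G^{(1)})=\tau_{z_k,c_k}\circ\cdots\circ\tau_{z_3,c_3}(G^{(2)})=\cdots=G^{(k)}=G,
\]
as desired. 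Note that since the relevant identity of Lemma \ref{lem:telescopic-operations-composition} is an equality of formulas, I need not separately verify the defining conditions on $\tau_{z_i,c_i}$ at each stage. As a sanity check, one could instead run the induction forward on the explicit form $g_j=z_jC_{j,k}$: the claim that $\tau_{z_i,c_i}\circ\cdots\circ\tau_{z_2,c_2}(\gcd(G))=G_i/C_{i,k}$ is immediate from the base case together with the identity $c_iC_{j,i-1}=C_{j,i}$, and this cross-checks the computation above.
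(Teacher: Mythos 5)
Your proof is correct, and it takes a genuinely different route from the paper's. The paper argues \emph{forward}: for $\gcd(G)=1$ it shows by induction on $j$ that $\tau_{z_j,c_j}\circ\cdots\circ\tau_{z_2,c_2}(1)=(z_1C_{1,j},\dots,z_jC_{j,j})$, using the telescopic condition $c_{j+1}g_{j+1}\in\langle G_j\rangle$ at each stage to verify that $z_{j+1}$ lies in the monoid generated by the sequence built so far (so that the next $\tau$ is applied with legitimate parameters), and then it handles general $\gcd(G)=d$ by passing to $G/d$ and scaling via Lemma \ref{lem:rho-tau-mults}. You argue \emph{backward}: you run the $\rho$-chain of Lemma \ref{lem:rho-down} down to $\gcd(G)$, identify the intermediate sequences as $G^{(i)}=G_i/C_{i,k}$, and invert each step with the first identity of Lemma \ref{lem:telescopic-operations-composition}. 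This buys a uniform treatment of all gcds (no case split) and reuses existing machinery; your bookkeeping claim that $G^{(i)}$ has last entry $z_i$ and last $c$-value $c_i$ is the only real work, and it is correct. The one point to flag is your closing remark that you ``need not separately verify the defining conditions'': as written, your argument never actually uses that $G$ is telescopic (the appeal to Proposition \ref{prop:telescopic-operations-rho} does no work), so it establishes the displayed equality of tuples for an \emph{arbitrary} sequence with $g_1>0$; but $\tau_{g,m}(X)$ is only defined in the paper when $g\in\langle X\rangle$ and $\gcd(g,m)=1$, so for $T_G$ to be a composition of actual maps one should check $z_i\in\langle G^{(i-1)}\rangle$ at each stage --- which is precisely where telescopicness enters, and which the paper's induction verifies. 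In your framework this is a short patch: $G^{(i)}$ is telescopic with last $c$-value $c_i$, so $c_iz_i\in\langle (G^{(i)})_{i-1}\rangle$; since every entry of $(G^{(i)})_{i-1}=G_{i-1}/C_{i,k}$ is divisible by $c_i$ (its gcd is $c_i\gcd(G)$), dividing by $c_i$ gives $z_i\in\langle G_{i-1}/C_{i-1,k}\rangle=\langle G^{(i-1)}\rangle$. (The coprimality half, $\gcd(z_i,c_i)=1$, can genuinely fail when $\gcd(G)>1$ --- e.g., $G=(4,6)$ has $z_2=6$, $c_2=2$ --- and only $\gcd(z_i/d,c_i)=1$ is guaranteed, so the paper's own composition is also partly formal in the general case; this makes your reading defensible, but the membership check is still worth recording.)
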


\begin{proof}
If $k=1$, then $T_G$ is the identity map. Since $\gcd(g_1)=g_1$, $T_G(\gcd(G))=G$.

Now, suppose $k>1$. We first consider the case where $\gcd(G)=1$. For $j=2, \dots, k$, we wish to show $\tau_{z_j,c_j}\circ\cdots\circ\tau_{z_2,c_2}(1)$ is defined and equal to $(z_1C_{1,j}, \dots, z_jC_{j,j})$.

For $j=2$, since $z_2\in\langle 1\rangle$, $\tau_{z_2,c_2}(1)=(c_2, z_2)=(z_1C_{1,2}, z_2C_{2,2})$.

Now, for induction assume $\tau_{z_j,c_j}\circ\cdots\circ\tau_{z_2,c_2}(1)$ is defined and equal to $(z_1C_{1,j}, \dots, z_jC_{j,j})$. Since $G$ is telescopic, $c_{j+1}g_{j+1}\in\langle G_j\rangle$, so $z_{j+1}C_{j,k}\in\langle z_1C_{1,k}, \dots, z_jC_{j,k}\rangle$. It follows that $z_{j+1}\in\langle z_1C_{1,j}, \dots, z_jC_{j,j}\rangle$, and therefore $z_{j+1}\in\langle\tau_{z_j,c_j}\circ\cdots\circ\tau_{z_2,c_2}(1)\rangle$. Then 
\begin{align*}
(\tau_{z_{j+1},c_{j+1}}\circ\cdots\circ\tau_{z_2,c_2})(1)
&= \tau_{z_{j+1}, c_{j+1}}(z_1C_{1,j}, \dots, z_jC_{j,j})\\
&= (z_1C_{1,j}c_{j+1}, \dots, z_jC_{j,j}c_{j+1}, z_{j+1})\\
&= (z_1C_{1,j+1}, \dots, z_jC_{j,j+1}, z_{j+1}C_{j+1,j+1}).
\end{align*}
In particular, for $j=k$, we have 
\[(\tau_{z_k,c_k}\circ \tau_{z_{k-1},c_{k-1}}\circ\dots\circ\tau_{z_2,c_2})(1)=(z_1 C_{1,k}, \dots, z_k C_{k,k})=G.\]

Now, suppose $\gcd(G)=d\ge 1$ and let $H=G/d$, so $H=(h_1, \dots, h_k)$ with $h_i=g_i/d$ and $c(H)=c(G)$. Let $y_i=z_i/d$. By Proposition \ref{prop:explicit-description-telescopic}, since $g_i=z_iC_{i,k}$ with $z_i\in d\mathbb{N}_0$ for $i=1, \dots, k$, we have $h_i=y_iC_{i,k}$ with $y_i\in\mathbb{N}_0$. By above, 
\[(\tau_{y_k,c_k}\circ \tau_{y_{k-1},c_{k-1}}\circ\dots\circ\tau_{y_2,c_2})(1)=(y_1 C_{1,k}, \dots, y_k C_{k,k})=H.\] 
Since $\tau_{dg,m}(G)=\tau_{g,m}(dG)$ (by Lemma \ref{lem:rho-tau-mults}), we multiply by $d$ and get 
\[(\tau_{dy_k,c_k}\circ \tau_{dy_{k-1},c_{k-1}}\circ\dots\circ\tau_{dy_2,c_2})(d)=(dy_1 C_{1,k}, \dots, dy_k C_{k,k})=dH.\] 
Finally, since $z_i=dy_i$ and $dH=G$,
\[(\tau_{z_k,c_k}\circ \tau_{z_{k-1},c_{k-1}}\circ\dots\circ\tau_{z_2,c_2})(d)=(z_1 C_{1,k}, \dots, z_k C_{k,k})=G.\]
\end{proof}

We now have our result. Given any two telescopic sequences $G$ and $H$ with $\gcd(G)=\gcd(H)$, we can perform a sequence of $\rho_n$ and $\tau_{g,m}$ operations on $G$ to produce $H$. We send $G$ to $\gcd(G)=\gcd(H)$ and then send $\gcd(H)$ to $H$.

\begin{prop}
Let $G$ and $H$ be telescopic sequences with $\gcd(G)=\gcd(H)$. Then there exist functions $\phi_{G,H}$ and $\phi_{H,G}$, each a finite composition of functions $\rho_{n_i}$ and $\tau_{g_j,m_j}$ (for parameters $n_i$ and $g_j, m_j$) such that $\phi_{G,H}(G)=H$ and $\phi_{H,G}(H)=G$.
\end{prop}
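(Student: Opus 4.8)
The proof is essentially immediate from the machinery already assembled, so my plan is simply to compose the two constructions built in Lemmas \ref{lem:rho-down} and \ref{lem:tau-up}. The idea is to route every telescopic sequence through its gcd as a common ``hub.'' First I would apply the map $R_G = \rho_2\circ\rho_3\circ\cdots\circ\rho_k$ from Lemma \ref{lem:rho-down}, which sends $G$ down to the length-one sequence $\gcd(G)$. Then, writing $\gcd(G)=\gcd(H)$, I would apply the map $T_H = \tau_{w_l,c_l'}\circ\cdots\circ\tau_{w_2,c_2'}$ from Lemma \ref{lem:tau-up} (built from the data of $H$, where $H$ has length $l$, $c(H)=(c_2',\dots,c_l')$, and $w_i=h_i/C_{i,l}'$), which sends $\gcd(H)$ up to $H$. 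Defining $\phi_{G,H}=T_H\circ R_G$ then gives $\phi_{G,H}(G)=T_H(\gcd(G))=T_H(\gcd(H))=H$. Symmetrically, $\phi_{H,G}=T_G\circ R_H$ satisfies $\phi_{H,G}(H)=G$.

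The key steps, in order, are as follows. One must check that $R_G$ is well-defined as a composition, which is guaranteed because Lemma \ref{lem:rho-c-vals} shows each $\rho_n$ drops the length by exactly one and correctly tracks the $c$-vector, so the successive $\rho_j$ applications are applicable; Lemma \ref{lem:rho-down} then records that the end result is $\gcd(G)$. One must likewise verify that $T_H$ is well-defined: here Lemma \ref{lem:tau-up} does the work, since its inductive argument confirms that at each stage the required gluing parameter $w_{j}$ lies in the submonoid generated by the current sequence (so that $\gcd(w_j, c_j')=1$ and the $\tau$-operation is legal) and that the composition reconstructs $H$. Finally, the crucial equality $\gcd(G)=\gcd(H)$ is exactly the hypothesis, so the length-one outputs of $R_G$ and $R_H$ agree with the length-one inputs that $T_H$ and $T_G$ expect.

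I would also note explicitly that each $\rho_n$ and each $\tau_{g,m}$ (with its admissible parameters) preserves telescopicness, by Propositions \ref{prop:telescopic-operations-rho} and \ref{prop:telescopic-operations-tau}; this guarantees that every intermediate sequence produced along the way is itself telescopic, so all the lemmas remain applicable at each stage and the compositions $\phi_{G,H}$ and $\phi_{H,G}$ are genuine maps between telescopic sequences. Since $\langle\sigma(G)\rangle=\langle G\rangle$ and the operations preserve the generated submonoid when desired, no additional bookkeeping is needed.

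I do not anticipate a serious obstacle, as this proposition is a clean corollary of the two directional lemmas; the only point requiring a moment of care is confirming that the ``hub'' value matches, i.e.\ that $R_G(G)=\gcd(G)=\gcd(H)$ is precisely the input required by $T_H$. The mild subtlety worth a sentence is that the parameters defining $T_H$ depend on $H$ (not $G$), so $\phi_{G,H}$ and $\phi_{H,G}$ are genuinely different compositions built from different data; the statement asks only for \emph{existence} of such compositions, which this explicit construction supplies.
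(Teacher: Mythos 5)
Your proof is correct and follows essentially the same route as the paper: both define $\phi_{G,H}=T_H\circ R_G$ (and symmetrically $\phi_{H,G}=T_G\circ R_H$), using Lemma \ref{lem:rho-down} to collapse $G$ to $\gcd(G)$, the hypothesis $\gcd(G)=\gcd(H)$ as the hub, and Lemma \ref{lem:tau-up} (which requires $H$ telescopic) to rebuild $H$. Your additional remarks on well-definedness and telescopicness preservation are consistent with the paper's ingredients and do not change the argument.
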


\begin{proof}
By Lemmas \ref{lem:rho-down} and \ref{lem:tau-up}, $T_H(R_G(G))=T_H(\gcd(G))=T_H(\gcd(H))$. Since $H$ is telescopic, $T_H(\gcd(H))=H$. Thus, $\phi_{G,H}=T_H\circ R_G$ is composition of $\rho_{n_i}$ and $\tau_{g_j,m_j}$ functions such that $\phi_{G,H}(G)=H$.

Reversing the roles of $G$ and $H$, since $G$ is telescopic, we have the same result for $\phi_{H,G}$.
\end{proof}

We conclude this section with an example.

\begin{ex}
Let $G=(4,6,9)$ and $H=(30,18,20,33)$. Then $\gcd(G)=\gcd(H)=1$, $c(G)=(2,2)$, and $c(H)=(5,3,2)$. Both sequences are telescopic. Then $R_G=\rho_2\circ\rho_3$, and
\[R_G(G)=(\rho_2\circ\rho_3)(4,6,9)=\rho_2(2,3)=1.\] For $T_H$, we first need the $z_i$ values for $H$. Since $z_i=h_i/C_{i,4}$ for $i=1,2,3,4$, we get $z_1=1$, $z_2=3$, $z_3=10$, and $z_4=33$. Therefore $T_H=\tau_{33,2}\circ\tau_{10,3}\circ\tau_{3,5}$, and
\[T_H(1)=(\tau_{33,2}\circ\tau_{10,3}\circ\tau_{3,5})(1)=(\tau_{33,2}\circ\tau_{10,3})(5,3)=\tau_{33,2}(15,9,10)=(30,18,20,33).\] Thus, for $\phi_{G,H}=T_H\circ R_G=\tau_{33,2}\circ\tau_{10,3}\circ\tau_{3,5}\circ\rho_2\circ\rho_3$, we have $\phi_{G,H}(G)=H$.

We follow the same process for the reverse direction. Here, $T_G=\tau_{9,2}\circ\tau_{3,2}$ and $R_H=\rho_2\circ\rho_3\circ\rho_4$. Therefore $\phi_{H,G}=T_G\circ R_H=\tau_{9,2}\circ\tau_{3,2}\circ\rho_2\circ\rho_3\circ\rho_4$ and $\phi_{H,G}(H)=G$.
\end{ex}

\section{Reduction of a telescopic sequence to a minimal telescopic sequence}\label{sec:reduction-telescopic-minimal}
In this section, we present a method to take a telescopic sequence $G$ and produce a minimal telescopic sequence $G'$ such that $\langle G'\rangle=\langle G\rangle$. The main result is Theorem \ref{thm:minimal-telescopic-sequence}. In the context of free numerical semigroups, this shows that every free numerical semigroup is generated by a telescopic sequence that is minimal. As we will see, the main idea is in Proposition \ref{prop:two-cases}, which says there are only two ways in which a telescopic sequence can fail to be minimal.

Before we begin, we introduce one more sequence operation and two helpful lemmas. For $G\in\mathbb{N}_0^k$ and $n=1, \dots, k$, let 
\begin{equation}
\pi_n(G)=G_{n-1}\times G_{n,k}.
\end{equation}
In other words, for $G=(g_1, \dots, g_k)$, 
\begin{equation}
\pi_n(G)=(g_1,\dots,\widehat{g_n},\dots,g_k)=(g_1, \dots, g_{n-1}, g_{n+1}, \dots, g_k)\in\mathbb{N}_0^{k-1}.
\end{equation}
The $n$th term is removed from $G$ and the indices of all subsequent terms are decreased by one.

With this notation, we have that a sequence $G\in\mathbb{N}_0^k$ is minimal if and only if $\langle \pi_n(G)\rangle\ne\langle G\rangle$ for all $n=1, \dots, k$, which occurs if and only if $g_n\not\in\langle\pi_n(G)\rangle$ for all $n=1, \dots, k$.

As in previous sections, we will continue to assume that $g_1>0$.

\begin{lemma}\label{lem:gcd-set-semigroup}
For any sequence $G\in\mathbb{N}_0^k$, let $S=\langle G\rangle$. Then $\gcd(G)=\gcd(S)$.
\end{lemma}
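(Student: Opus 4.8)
The plan is to prove the equality by establishing divisibility in both directions, using that $\gcd(G)$ and $\gcd(S)$ are both well-defined positive integers (the assumption $g_1>0$ forces $\gcd(G)>0$, so $S=\langle G\rangle$ contains nonzero elements and $\gcd(S)$ is a genuine positive integer). Throughout, write $d=\gcd(G)=\gcd(g_1,\dots,g_k)$.

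First I would show $d\mid\gcd(S)$. Every element $s\in S$ is by definition an $\mathbb{N}_0$-linear combination $s=\sum_{i=1}^k n_i g_i$ with $n_i\in\mathbb{N}_0$. Since $d\mid g_i$ for each $i$, it follows that $d\mid s$. Hence $d$ is a common divisor of all elements of $S$, which gives $d\mid\gcd(S)$.

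For the reverse direction, I would use the trivial observation that each generator lies in the semigroup it generates: $g_i\in\langle G\rangle=S$ for $i=1,\dots,k$. Consequently $\gcd(S)$ divides each $g_i$, and therefore $\gcd(S)\mid\gcd(g_1,\dots,g_k)=d$. Combining the two divisibilities $d\mid\gcd(S)$ and $\gcd(S)\mid d$ yields $d=\gcd(S)$, since both are positive integers.

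There is no substantive obstacle in this argument; the only point requiring a little care is the bookkeeping around well-definedness of $\gcd(S)$, which is handled by the standing assumption $g_1>0$ so that neither $S$ nor its gcd is degenerate.
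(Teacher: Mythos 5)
Your proof is correct and is essentially the same as the paper's: both directions are established by the same two observations, namely that $\gcd(G)$ divides every $\mathbb{N}_0$-linear combination of the $g_i$ (so $\gcd(G)\mid\gcd(S)$), and that each $g_i$ lies in $S$ (so $\gcd(S)\mid\gcd(G)$). Your extra remark on well-definedness via $g_1>0$ is harmless bookkeeping that the paper leaves implicit.
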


\begin{proof}
Let $d_G=\gcd(G)$ and $d_S=\gcd(S)$. Since $G\subset S$, $d_S\mid d_G$. For any $s\in S$, $s=\sum_{i=1}^k a_ig_i$ for some non-negative integers $a_i$. Since $d_G\mid g_i$ for all $i$, we have $d_G\mid s$ for all $s\in S$. Since $d_G$ is a common divisor for all elements of $S$, $d_G\mid d_S$. Thus $d_G=d_S$.
\end{proof}

\begin{lemma}\label{lem:basic}
Suppose $G=(g_1, \dots, g_k)\in\mathbb{N}_0^k$ is any sequence (not necessarily telescopic). If $g_n\in\langle \pi_n(G)\rangle$ for some $n$, then $\gcd(\pi_n(G))=\gcd(G)$.
\end{lemma}

\begin{proof}
Since $\langle \pi_n(G)\rangle=\langle G\rangle$, we use Lemma \ref{lem:gcd-set-semigroup} twice with $\pi_n(G)$ and $G$ to get 
\[\gcd(\pi_n(G))=\gcd(\langle \pi_n(G)\rangle)=\gcd(\langle G\rangle)=\gcd(G).\]
\end{proof}

\subsection{Determination of the two cases for non-minimality of a telescopic sequence}
As we will see, there are two ways for which a telescopic sequence can fail to be minimal. In Lemmas \ref{lem:c_k-divides_a_k} and \ref{lem:cj-divides-aj}, we will show that if one entry of a telescopic sequence can be written as a non-negative linear combination of the remaining elements, then we have a divisibility condition on the coefficient of the term with highest index.

\begin{lemma}\label{lem:c_k-divides_a_k}
Let $G=(g_1, \dots, g_k)\in\mathbb{N}_0^k$ be telescopic 
with $c(G)=(c_2, \dots, c_k)$. Suppose $g_n\in\langle\pi_n(G)\rangle$ for some $n\in\{1, \dots, k\}$. That is, suppose 
\begin{equation}\label{eqn:gn-summation}
g_n = \sum_{i=1,\,i\ne n}^k a_i g_i
\end{equation}
for non-negative integers $a_i$. Then $c_k \mid a_k$.
\end{lemma}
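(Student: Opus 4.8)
The plan is to reduce equation \eqref{eqn:gn-summation} modulo $\gcd(G_{k-1})$ and to exploit that $g_k$ is, after dividing out $\gcd(G)$, coprime to $c_k$. Write $d=\gcd(G)$. First I would record two facts. By Lemma \ref{lem:gcd-and-Cik} we have $\gcd(G_{k-1})=C_{k-1,k}\,d=c_k d$, since $C_{k-1,k}=c_k$; hence $c_k d$ divides each of $g_1,\dots,g_{k-1}$. Second, since $\gcd(G_k)=\gcd(G)=d$ and $\gcd(G_k)=\gcd(\gcd(G_{k-1}),g_k)=\gcd(c_k d,g_k)$, factoring out $d$ (which divides both $c_k d$ and $g_k$) gives $\gcd(c_k,g_k/d)=1$.

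The main case is $n\le k-1$, so that $a_k$ genuinely appears in \eqref{eqn:gn-summation}. I would reduce \eqref{eqn:gn-summation} modulo $c_k d$. Every generator $g_i$ with $i\le k-1$, including the left-hand side $g_n$, is divisible by $c_k d$, so all of these terms vanish, leaving $a_k g_k\equiv 0\pmod{c_k d}$, i.e.\ $c_k d\mid a_k g_k$. Writing $g_k=d\,(g_k/d)$ and cancelling $d$ yields $c_k\mid a_k(g_k/d)$, and since $\gcd(c_k,g_k/d)=1$, Euclid's lemma forces $c_k\mid a_k$, as desired.

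Finally, when $n=k$ the sum in \eqref{eqn:gn-summation} ranges only over $i\le k-1$ and contains no coefficient $a_k$, so the conclusion $c_k\mid a_k$ refers to no quantity and there is nothing to prove.

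The only real choice in the argument is the modulus: once one reduces modulo $\gcd(G_{k-1})=c_k d$, the telescopic structure collapses every generator except $g_k$, and the divisibility statement is immediate. The one point requiring care is the coprimality $\gcd(c_k,g_k/d)=1$, which I would derive directly from the gcd chain above rather than from the explicit parametrization in Proposition \ref{prop:explicit-description-telescopic}; this keeps the proof short and self-contained and avoids invoking the $z_i$ description.
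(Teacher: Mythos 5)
Your proof is correct and takes essentially the same route as the paper's: both arguments amount to showing $\gcd(G_{k-1})\mid a_k g_k$ (your reduction modulo $\gcd(G_{k-1})=c_k d$ is the paper's step of isolating $a_kg_k$ as an integer combination of $g_1,\dots,g_{k-1}$) and then extracting $c_k\mid a_k$ from the relation $\gcd\left(\gcd(G_{k-1}),g_k\right)=\gcd(G)$, with no use of telescopicness in either case. The only cosmetic difference is at $n=k$, where the paper uses Lemma \ref{lem:basic} to conclude $c_k=1$ so the divisibility holds under any reading, while you dismiss that case as vacuous since $a_k$ does not appear; both treatments are acceptable.
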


\begin{proof}
Since $g_n\in\langle \pi_n(G)\rangle$, by Lemma \ref{lem:basic}, $\gcd(\pi_n(G))=\gcd(G)$. We consider two cases: $n=k$ and $n<k$.

Assume $n=k$. Since $G_{k-1}=(g_1, \dots, g_{k-1})=\pi_k(G)$, we have $\gcd(G_{k-1})=\gcd(G_k)$, so $c_k=\gcd(G_{k-1})/\gcd(G_k)=1$, and consequently $c_k\mid a_k$.

Now assume $n<k$. Since $\gcd(G_k)=\gcd(G)$, then $c_k=\gcd(G_{k-1})/\gcd(G)$. Therefore, in order to have that $c_k\mid a_k$, it is enough to show that $\gcd(G_{k-1})\mid (a_k\gcd(G))$. From Equation \eqref{eqn:gn-summation} we solve for $a_kg_k$ to get 
\[a_kg_k = g_n-\sum\limits_{i=1,\,i\ne n}^{k-1}a_ig_i,\]
so $a_kg_k$ is a linear combination of $g_1, \dots, g_{k-1}$. Since $\gcd(G_{k-1}) \mid g_1, \dots, g_{k-1}$, we have $\gcd(G_{k-1}) \mid (a_kg_k)$. Then, since $\gcd\left(\gcd(G_{k-1}), g_k\right)=\gcd(G)$, we conclude that $\gcd(G_{k-1})\mid (a_k\gcd(G))$.
\end{proof}

\begin{lemma}\label{lem:cj-divides-aj}
Let $G=(g_1, \dots, g_k)\in\mathbb{N}_0^k$ be a telescopic sequence with $c(G)=(c_2, \dots, c_k)$. Suppose there exist $n$ and $m$ with $1\le n < m\le k$ such that $g_n\in\langle \pi_n(G_m)\rangle$. That is, suppose 
\begin{equation}
g_n = \sum_{i=1,\,i\ne n}^m a_i g_i
\end{equation}
for non-negative integers $a_i$.  Then $c_m\mid a_m$.
\end{lemma}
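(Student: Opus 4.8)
The statement of Lemma \ref{lem:cj-divides-aj} is the natural generalization of Lemma \ref{lem:c_k-divides_a_k}: rather than removing the last term $g_k$ from the full telescopic sequence $G$, we remove an intermediate term $g_n$ and work only with the truncation $G_m$. The plan is to reduce this to the previous lemma by invoking the fact, established in Lemma \ref{lem:shortened-telescopic}, that the truncation $G_m=(g_1, \dots, g_m)$ is itself a telescopic sequence with $c(G_m)=(c_2, \dots, c_m)$. In particular, the ``last'' $c$-value of $G_m$ is precisely $c_m$, which is the quantity we want to show divides $a_m$.

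Concretely, I would set $H=G_m$, so that $H$ is telescopic of length $m$ with $c(H)=(c_2, \dots, c_m)$. The hypothesis says exactly that $g_n\in\langle \pi_n(H)\rangle$, since $\pi_n(G_m)=\pi_n(H)$ and the index $n$ satisfies $1\le n<m$, so $n\in\{1,\dots,m\}$ is a valid index to remove from $H$. The given expression $g_n = \sum_{i=1,\,i\ne n}^m a_i g_i$ is then precisely a witness that $g_n$ lies in the submonoid generated by the other entries of $H$, with the coefficient of the highest-index term $g_m$ being $a_m$. Applying Lemma \ref{lem:c_k-divides_a_k} to the telescopic sequence $H$ (whose length is $m$ and whose last $c$-value is $c_m$) yields immediately that $c_m\mid a_m$, which is the desired conclusion.

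The only point requiring care is matching the notation between the two lemmas. In Lemma \ref{lem:c_k-divides_a_k} the conclusion is phrased in terms of the ambient length $k$ and its last ratio $c_k$; here I am applying it with $k$ replaced by $m$, so the role of $c_k$ is played by $c_m$. I would note that by Lemma \ref{lem:shortened-telescopic} the $c$-values of $H=G_m$ agree with $c_2,\dots,c_m$, so the $c_m$ appearing in the hypothesis of the present lemma coincides with the ``$c_k$'' produced when Lemma \ref{lem:c_k-divides_a_k} is applied to $H$. There is no genuine obstacle: the substance of the argument was already carried out in Lemma \ref{lem:c_k-divides_a_k}, and this lemma is a direct corollary obtained by truncating. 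The main thing to get right is simply to verify that the hypotheses of Lemma \ref{lem:c_k-divides_a_k} transfer cleanly to the truncated sequence $G_m$, which they do because telescopicness is inherited by initial segments.
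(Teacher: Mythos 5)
Your proposal is correct and matches the paper's proof exactly: the paper also sets $H=G_m$, invokes Lemma \ref{lem:shortened-telescopic} to get that $H$ is telescopic with $c(H)=(c_2,\dots,c_m)$, and then applies Lemma \ref{lem:c_k-divides_a_k} to $H$ to conclude $c_m\mid a_m$. No differences worth noting.
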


\begin{proof}
Suppose $g_n\in\langle\pi_n(G_m)\rangle$ for some $m>n$. Let $H=G_m$, a sequence of length $m$. By Lemma \ref{lem:shortened-telescopic}, $H$ is telescopic with $c(H)=(c_2,\dots,c_m)$. Since $g_n\in\langle \pi_n(H)\rangle$, we apply Lemma \ref{lem:c_k-divides_a_k} to $H$ to conclude that $c_m\mid a_m$.
\end{proof}

We need one more lemma before we can state our main result on the form of a non-minimal telescopic sequence.

\begin{lemma}\label{lem:telescopic-exactly-one-occurs}
For $G=(g_1, \dots, g_k)\in\mathbb{N}_0^k$ telescopic with $c(G)=(c_2, \dots, c_k)$, suppose $g_n\in \langle \pi_n(G_m) \rangle$ for some $m$ and $n$ with $1\le n\le m\le k$. Then $g_n\in \langle \pi_n(G_{m-1})\rangle$ or $g_n=c_m g_m$.
\end{lemma}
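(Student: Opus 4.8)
The statement asserts a dichotomy: if $g_n\in\langle\pi_n(G_m)\rangle$, then either the term $g_m$ was genuinely not needed (so $g_n\in\langle\pi_n(G_{m-1})\rangle$) or $g_n$ equals exactly $c_m g_m$. The natural approach is to write $g_n=\sum_{i=1,\,i\ne n}^m a_i g_i$ for non-negative integers $a_i$, and to split on whether the index $n$ equals $m$ or is strictly less. The key leverage is Lemma \ref{lem:cj-divides-aj} (or Lemma \ref{lem:c_k-divides_a_k} in the case $n=m$, though here $n\le m$ so I must handle $n=m$ separately), which gives the divisibility $c_m\mid a_m$. I would first dispose of the boundary case $n=m$: then $g_m\in\langle\pi_m(G_m)\rangle=\langle G_{m-1}\rangle$, which says directly that $g_n=g_m\in\langle\pi_n(G_{m-1})\rangle$, putting us in the first alternative. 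So assume $n<m$.

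With $n<m$, Lemma \ref{lem:cj-divides-aj} gives $c_m\mid a_m$, so write $a_m=c_m q$ with $q\in\mathbb{N}_0$. The plan is then to case on whether $q=0$ or $q\ge1$. If $a_m=0$, the term $a_m g_m$ drops out of the summation and we immediately get $g_n=\sum_{i=1,\,i\ne n}^{m-1}a_i g_i\in\langle\pi_n(G_{m-1})\rangle$, the first alternative. This is the easy branch. The substance is in showing that if $a_m\ge c_m$ (i.e.\ $q\ge1$), we are forced into the second alternative $g_n=c_m g_m$, which in particular requires all the other coefficients to vanish and $q=1$.

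The hard part will be ruling out $q\ge2$ and forcing the other coefficients to zero. Here I would exploit the unique representation result (the Bertin--Carbonne theorem quoted early in the paper): for the telescopic sequence $G_m$ with $\gcd(G_m)=d_m$, every element of $d_m\mathbb{Z}$ has a unique expression $\sum n_i g_i$ with $0\le n_j<c_j$ for $j\ge2$. Since $c_m\mid a_m$, the quantity $a_m g_m=c_m q\, g_m$ is a multiple of $c_m g_m$, and telescopicness gives $c_m g_m\in\langle G_{m-1}\rangle$; I can absorb the $c_m g_m$ contributions into the lower-indexed generators to rewrite $g_n$ in a form where the coefficient of $g_m$ lies in the range $0\le(\cdot)<c_m$. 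Comparing this normalized expression for $g_n$ against the trivial representation $g_n=1\cdot g_n$ (valid since $n<m$ means $g_n$ is itself one of $g_1,\dots,g_{m-1}$, already in normal form) and invoking uniqueness should pin down exactly when the coefficient of $g_m$ is nonzero. The cleanest route may be: if $a_m>0$ then $a_m\ge c_m$, so $g_n\ge a_m g_m\ge c_m g_m$; combined with $c_m g_m\in\langle G_{m-1}\rangle$ and the minimality/uniqueness of representations, I expect to conclude $g_n=c_m g_m$ with all other $a_i=0$. The delicate bookkeeping is ensuring the reduction does not secretly re-introduce a $g_m$ term, so the reindexing via $c_m g_m\in\langle G_{m-1}\rangle$ and the uniqueness theorem are what carry the argument.
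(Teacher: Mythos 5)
Your treatment of the boundary case $n=m$ and of the branch $a_m=0$ matches the paper, and you correctly identify Lemma \ref{lem:cj-divides-aj} as the source of the divisibility $c_m\mid a_m$. But the core of your plan rests on a false intermediate claim: that if \emph{some} representation $g_n=\sum_{i\ne n}a_ig_i$ has $a_m\ge c_m$ (i.e.\ $q\ge1$), then $g_n=c_mg_m$. The dichotomy in the lemma is not governed by the coefficient $a_m$ of one chosen representation; it is governed by whether \emph{every} representation needs $g_m$, that is, by whether $g_n\in\langle\pi_n(G_{m-1})\rangle$. Concretely, take $G=(4,20,5,15)$, which is telescopic with $c(G)=(1,4,1)$, and take $n=2$, $m=4$. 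Then $g_2=20=g_3+g_4$ is a representation with $a_4=1\ge c_4=1$, yet $g_2=20\ne c_4g_4=15$; the lemma's conclusion holds here only through the first alternative, since $20=5g_1=4g_3\in\langle\pi_2(G_3)\rangle$, via a \emph{different} representation. So the branch ``$q\ge1$ forces $g_n=c_mg_m$'' cannot be proved as stated --- it is false.

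The repair is to split on membership, as the paper does: assume $g_n\notin\langle\pi_n(G_{m-1})\rangle$ and show $g_n=c_mg_m$. Under that hypothesis any representation has $a_m>0$, so $c_m\mid a_m$ with $q=a_m/c_m\ge1$, and substituting $c_mg_m=\sum_{i=1}^{m-1}b_ig_i$ (telescopicness) gives $g_n=qb_ng_n+\sum_{i\ne n,\,i<m}(qb_i+a_i)g_i$. The non-membership hypothesis is then used a \emph{second} time: it forces $qb_n\ne0$ (and $g_n>0$), hence $qb_n=1$, hence $q=b_n=1$ and all other terms vanish, giving $g_n=c_mg_m$. This is elementary and avoids the Bertin--Carbonne uniqueness theorem entirely. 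Your uniqueness route also has an unaddressed wrinkle even after this repair: the ``trivial'' representation $g_n=1\cdot g_n$ is a valid normal form only when $n=1$ or $c_n>1$; when $c_n=1$ it is not (although in that case telescopicness gives $g_n\in\langle G_{n-1}\rangle$ directly, landing in the first alternative). Since the decisive step of your plan is hedged (``should pin down,'' ``I expect to conclude'') and, with your case split, is in fact unprovable, the proposal has a genuine gap.
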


\begin{proof}
First, suppose $n=m$.  Since $\pi_n(G_n)=(g_1, \dots, g_{n-1})=\pi_n(G_{n-1})$, if $g_n\in\langle \pi_n(G_m)\rangle$ then $g_n\in\langle\pi_n(G_{m-1})\rangle$, as desired. (Additionally, we have that $c_m=\gcd(G_m)/\gcd(G_{m-1})=1$ in this case. Since $g_n=g_m$, we also have $g_n=c_mg_m$.)

Now suppose $n<m$. Suppose $g_n\in\langle\pi_n(G_m)\rangle$ and $g_n\not\in\langle \pi_n(G_{m-1})\rangle$. We wish to show that $g_n=c_mg_m$.

Since $g_n\in\langle\pi_n(G_m)\rangle$, there exist non-negative integers $a_i$ such that 
\begin{equation}\label{eqn:a_i-eqn}
g_n=\sum_{i=1,\, i\ne n}^m a_ig_i.
\end{equation} 
Since $g_n\not\in\langle\pi_n(G_{m-1})\rangle$, we must have $a_m > 0$. By Lemma \ref{lem:cj-divides-aj}, $c_m\mid a_m$, so we let $q=a_m/c_m\in\mathbb{N}$. For $G$ telescopic, $c_mg_m\in\langle G_{m-1}\rangle$, so there are $b_1, \dots, b_{m-1}\in\mathbb{N}_0$ such that 
\begin{equation}\label{eqn:telescopic-eqn}
c_mg_m=b_1g_1+\cdots+b_{m-1}g_{m-1}.
\end{equation} 
Thus $a_m g_m = qc_mg_m = q\sum\limits_{i=1}^{m-1}b_ig_i$, which we plug into Equation \eqref{eqn:a_i-eqn} to obtain 
\begin{equation}\label{eqn:combined-eqn}
g_n = qb_ng_n + \sum\limits_{i=1,\,i\ne n}^{m-1}(qb_i+a_i)g_i.\end{equation}
Since $g_n\not\in\langle \pi_n(G_{m-1})\rangle$, we must have $qb_n\ne0$. Since $qb_n\in\mathbb{N}$, this implies $qb_n=1$, so $(qb_i+a_i)g_i=0$ for $i=1, \dots, m-1$, with $i\ne n$. Therefore $q=b_n=1$ and $a_ig_i=b_ig_i=0$ for $i=1, \dots, m-1$, with $i\ne n$. Since $q=1$, $a_m=c_m$. Finally, we plug these values into either Equation \eqref{eqn:a_i-eqn} or Equation \eqref{eqn:telescopic-eqn} to conclude that $g_n=c_mg_m$.
\end{proof}

We now have our result.

\begin{prop}\label{prop:two-cases}
For $G=(g_1, \dots, g_k)\in\mathbb{N}_0^k$ a telescopic sequence, suppose $g_n\in\langle \pi_n(G)\rangle$. Then $g_n\in\langle G_{n-1} \rangle$ or $g_n=c_m g_m$ for some $m>n$.
\end{prop}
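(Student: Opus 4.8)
The plan is to deduce the global dichotomy from the ``one-step'' dichotomy of Lemma~\ref{lem:telescopic-exactly-one-occurs} by applying that lemma to the smallest truncation $G_m$ of $G$ in which $g_n$ remains redundant. Since $\pi_n(G)=\pi_n(G_k)$, the hypothesis $g_n\in\langle\pi_n(G)\rangle$ shows that the set
\[
M=\{\,m\in\{n,n+1,\dots,k\}: g_n\in\langle\pi_n(G_m)\rangle\,\}
\]
is nonempty, as it contains $k$. First I would set $m=\min M$ and split on whether $m=n$ or $m>n$.

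If $m=n$, then $\pi_n(G_n)=(g_1,\dots,g_{n-1})=G_{n-1}$ (deleting the final, $n$th, entry of $G_n$), so the membership $g_n\in\langle\pi_n(G_m)\rangle$ reads $g_n\in\langle G_{n-1}\rangle$; this is precisely the first alternative of the conclusion. If instead $m>n$, then minimality of $m$ forces $g_n\notin\langle\pi_n(G_{m-1})\rangle$. Meanwhile $g_n\in\langle\pi_n(G_m)\rangle$ with $1\le n\le m\le k$, so Lemma~\ref{lem:telescopic-exactly-one-occurs} applies and yields that either $g_n\in\langle\pi_n(G_{m-1})\rangle$ or $g_n=c_mg_m$. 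The first option is excluded by the choice of $m$, so $g_n=c_mg_m$ with $m>n$, which is the second alternative. This completes the argument.

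I expect essentially all the difficulty to have already been absorbed into Lemma~\ref{lem:telescopic-exactly-one-occurs} and, beneath it, the divisibility Lemmas~\ref{lem:c_k-divides_a_k} and~\ref{lem:cj-divides-aj}; the only genuine content remaining is the observation that a well-ordering (descent) argument on the truncation index converts the per-step dichotomy into the global one. The single point requiring care is the boundary bookkeeping at $m=n$: one must confirm that $\pi_n(G_n)$ and $G_{n-1}$ name the same sequence, and that $M$ is indexed over $m\ge n$ so that Lemma~\ref{lem:telescopic-exactly-one-occurs} is always invoked with its index hypothesis $n\le m$ in force. Everything else follows immediately from the minimality of $m$.
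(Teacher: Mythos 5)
Your proposal is correct and takes essentially the same route as the paper: the paper's entire proof is the single line ``this follows immediately from Lemma~\ref{lem:telescopic-exactly-one-occurs},'' and your minimal-truncation argument (taking $m=\min M$ and splitting on $m=n$ versus $m>n$) is exactly the descent bookkeeping that makes that ``immediately'' rigorous, including the correct identification $\pi_n(G_n)=G_{n-1}$ at the boundary. No gaps; all the substantive work indeed lives in Lemma~\ref{lem:telescopic-exactly-one-occurs} and the divisibility lemmas beneath it.
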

\begin{proof}
This follows immediately from Lemma \ref{lem:telescopic-exactly-one-occurs}.
\end{proof}
Therefore, if a telescopic sequence $G$ is not minimal, there are two cases: Case 1, where $g_n\in\langle G_{n-1}\rangle$; and Case 2, where $g_n=c_mg_m$ for some $m>n$.

Proposition \ref{prop:two-cases} gives us a straightforward criterion to determine whether a given telescopic sequence is minimal.
\begin{cor}\label{cor:minimality-criterion}
Suppose $G=(g_1,\dots,g_k)\in\mathbb{N}_0^k$ is telescopic with $c(G)=(c_2,\dots,c_k)$. Let $H=(h_2,\dots,h_k)=(c_2g_2,\dots,c_kg_k)$. Then $G$ is minimal if and only if $g_i\ne h_j$ for all $i,j$.
\end{cor}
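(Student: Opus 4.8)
The plan is to prove the contrapositive on both sides: $G$ fails to be minimal if and only if some $g_i$ coincides with some $h_j=c_jg_j$. Recall that $G$ is not minimal exactly when $g_n\in\langle\pi_n(G)\rangle$ for some $n$, so the entire statement reduces to showing that such an $n$ exists if and only if $g_i=c_jg_j$ for some $i\in\{1,\dots,k\}$ and $j\in\{2,\dots,k\}$.

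For the forward direction, suppose $G$ is not minimal, so $g_n\in\langle\pi_n(G)\rangle$ for some $n$. I would feed this into Proposition \ref{prop:two-cases}, which splits the situation into two cases. In Case 2 we obtain $g_n=c_mg_m=h_m$ directly, producing the desired coincidence with $m>n\ge 1$ (so $m\ge 2$ and $h_m$ is defined). Case 1 says $g_n\in\langle G_{n-1}\rangle$; here I would first observe that $n\ge 2$, since $g_1>0$ forces $\langle G_0\rangle=\{0\}$ and rules out $n=1$. Then, because $g_n$ is a non-negative combination of $g_1,\dots,g_{n-1}$, we have $\gcd(G_{n-1})\mid g_n$, whence $\gcd(G_n)=\gcd(\gcd(G_{n-1}),g_n)=\gcd(G_{n-1})$ and therefore $c_n=1$. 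This gives $h_n=c_ng_n=g_n$, again a coincidence (with $i=j=n\ge 2$).

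For the reverse direction, suppose $g_i=c_jg_j=h_j$ for some $i$ and some $j\ge 2$. If $i\ne j$, then $g_j$ is one of the terms appearing in $\pi_i(G)$, and since $c_j$ is a positive integer we get $g_i=c_jg_j\in\langle g_j\rangle\subseteq\langle\pi_i(G)\rangle$, so $g_i$ is redundant. If $i=j$, then telescopicity gives $c_ig_i\in\langle G_{i-1}\rangle$, and since $g_i=c_ig_i$ while $G_{i-1}$ is a subsequence of $\pi_i(G)$, we again obtain $g_i\in\langle\pi_i(G)\rangle$. In either case $G$ is not minimal, completing the equivalence.

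The only genuinely new step is the gcd manipulation in Case 1 of the forward direction: a term lying in the span of the earlier terms cannot change the running gcd, which forces $c_n=1$ and hence $h_n=g_n$. I expect this to be the main (and essentially the sole) obstacle; once it is in hand, the remainder is bookkeeping built directly on Proposition \ref{prop:two-cases} and the definition of telescopic. I would also take care throughout to keep the index ranges straight, namely $i\in\{1,\dots,k\}$ versus $j\in\{2,\dots,k\}$, and to confirm that every coincidence produced in the forward direction has a legitimate index $j\ge 2$.
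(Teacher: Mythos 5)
Your proof is correct and takes essentially the same route as the paper's: the ``not minimal implies a coincidence'' direction rests on Proposition \ref{prop:two-cases} exactly as in the paper (with the same $c_n=1$ observation in Case 1), and your ``coincidence implies not minimal'' direction is just the contrapositive of the paper's direct argument that minimality forces $g_j\ne c_jg_j$ (since $c_jg_j\in\langle G_{j-1}\rangle$) and $g_i\ne c_jg_j$ for $i\ne j$ (since $c_jg_j\in\langle g_j\rangle$). The only difference is the contrapositive packaging; the mathematical content is identical.
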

\begin{proof}
$(\implies)$ Suppose $G$ is minimal. Then $g_j\not\in\langle G_{j-1}\rangle$ for all $j\ge2$. Since $G$ is telescopic, $c_jg_j\in\langle G_{j-1}\rangle$ for all $j\ge2$. Thus $g_j\ne c_jg_j$ for all $j\ge2$.

Also, since $G$ is minimal, $g_i\not\in\langle g_j\rangle$ for all $i\ne j$. In particular, we have $g_i\ne c_jg_j$ for all $i\ne j$. Combined with the previous paragraph, we have $g_i\ne h_j$ for all $i,j$.

$(\impliedby)$ Suppose $G$ is not minimal, so $g_i\in\langle\pi_i(G)\rangle$ for some $i$. By Proposition \ref{prop:two-cases}, we have either $g_i\in\langle \pi_i(G_{i-1})\rangle$ or $g_i=c_jg_j$ for some $j>i$. In the first case, $\gcd(g_1,\dots,g_i)=\gcd(g_1,\dots,g_{i-1})$, so $c_i=1$. Therefore $g_i=c_ig_i=h_i$. In the second case, $g_i=h_j$.
\end{proof}

The following example illustrates the two cases of Proposition \ref{prop:two-cases}.

\begin{ex}\label{ex:removal-example}
Let $G=(660,550,352,902,50,201)$ a telescopic sequence with $c(G)=(6,5,1,11,2)$. We have $g_2\in\langle \pi_2(G)\rangle$ and $g_4\in\langle \pi_4(G)\rangle$. Observe that $g_2=c_5g_5$, which is Case 2. And $g_4=g_2+g_3$, which is Case 1.
\end{ex}
For each case, we will see how to remove $g_n$ without losing the telescopic property of the generating sequence. We will illustrate our methods with the above example.

\subsubsection{Case 1}
For the case where $g_n\in\langle G_{n-1}\rangle$, we use a result (Proposition \ref{prop:telescopic-operations-rho}) about $\rho_n$ from Section \ref{sec:sequence-operations}.

\begin{cor}\label{cor:remove_gn_telescopic}
Suppose $G=(g_1, \dots, g_k)\in\mathbb{N}_0^k$ is telescopic with $g_1>0$ and that $g_n\in\langle G_{n-1} \rangle$ for some $n$. Then $n>1$, $c_n=1$, and $\pi_n(G)$ is a telescopic sequence with $\langle \pi_n(G) \rangle = \langle G \rangle$.
\end{cor}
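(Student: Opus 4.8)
The plan is to take the three assertions in turn, with essentially all of the substance concentrated in establishing $c_n=1$; once that is known, the telescopic conclusion falls out immediately from the $\rho_n$ machinery already built in Proposition \ref{prop:telescopic-operations-rho}. First I would dispose of $n>1$: if $n=1$, then $G_{n-1}=G_0$ is the empty sequence, so $\langle G_{n-1}\rangle=\{0\}$, which would force $g_1=g_n=0$ and contradict the standing assumption $g_1>0$. Hence $n>1$, and in particular $c_n$ is defined.

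To show $c_n=1$, I would exploit the divisibility forced by the hypothesis $g_n\in\langle G_{n-1}\rangle$. Writing $d_i=\gcd(G_i)$, each of $g_1,\dots,g_{n-1}$ is divisible by $d_{n-1}$, so any $\mathbb{N}_0$-linear combination of them is as well; in particular $d_{n-1}\mid g_n$. Therefore $d_n=\gcd(d_{n-1},g_n)=d_{n-1}$, and so $c_n=d_{n-1}/d_n=1$. This single divisibility computation is the one genuinely load-bearing step, and I expect it to be the main (albeit modest) obstacle, since everything after it is bookkeeping.

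The key observation is that $c_n=1$ makes the reduction operator $\rho_n$ coincide with plain deletion $\pi_n$: we have
\[
\rho_n(G)=(G_{n-1}/c_n)\times G_{n,k}=(g_1,\dots,g_{n-1},g_{n+1},\dots,g_k)=\pi_n(G),
\]
since dividing by $c_n=1$ changes nothing. As $G$ is telescopic, Proposition \ref{prop:telescopic-operations-rho} yields that $\rho_n(G)$ is telescopic, hence so is $\pi_n(G)$.

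Finally, for the equality of submonoids, the containment $\langle\pi_n(G)\rangle\subseteq\langle G\rangle$ is immediate because $\pi_n(G)$ is a subsequence of $G$. For the reverse, every $g_i$ with $i\ne n$ appears in $\pi_n(G)$, and $g_n\in\langle G_{n-1}\rangle\subseteq\langle\pi_n(G)\rangle$ since $G_{n-1}$ is a subsequence of $\pi_n(G)$; thus $\langle G\rangle\subseteq\langle\pi_n(G)\rangle$, and the two submonoids agree.
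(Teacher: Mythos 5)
Your proposal is correct and follows essentially the same route as the paper: rule out $n=1$ via the empty sequence, deduce $c_n=1$ (your direct divisibility computation $d_{n-1}\mid g_n$ is a minor variant of the paper's observation that $\langle G_{n-1}\rangle=\langle G_n\rangle$ forces equal gcds), then use Proposition \ref{prop:telescopic-operations-rho} together with the identity $\rho_n(G)=\pi_n(G)$ when $c_n=1$, and finish with the evident two-sided containment of submonoids. No gaps.
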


\begin{proof}
Suppose $n=1$. Then $G_{n-1}=G_0=()$, the empty sequence, so $\langle G_0\rangle=\{0\}$. Thus $g_1=0$. However, since we have assumed that $g_1>0$, this cannot occur. Therefore $n>1$. 

Now, let $c(G)=(c_2, \dots, c_k)$. If $g_n\in\langle G_{n-1}\rangle$, then $\langle G_{n-1}\rangle=\langle G_n\rangle$ so 
\[c_n=\gcd(G_{n-1})/\gcd(G_n)=1.\] 
By Proposition \ref{prop:telescopic-operations-rho}, the sequence $\rho_n(G)$ is telescopic. Since $c_n=1$,  
\[\rho_n(G) = \left(g_1/c_n, \dots, g_{n-1}/c_n, g_{n+1}, \dots, g_k\right)  =(g_1, \dots, g_{n-1}, g_{n+1}, \dots, g_k) = \pi_n(G),\] so $\pi_n(G)$ is telescopic. Also, since $\langle G_{n-1}\rangle=\langle G_n\rangle$, $\langle G_{n-1}\times G_{n,k}\rangle = \langle G_n\times G_{n,k}\rangle$, so $\langle \pi_n(G)\rangle=\langle G\rangle$, as desired.
\end{proof}

In this case, we can therefore remove $g_n$ to produce a shorter telescopic sequence which generates the same submonoid.

\begin{ex}\label{ex:case1}
For the telescopic sequence $G=(660,550,352,902,50,201)$ with $c(G)=(6,5,1,11,2)$ from Example \ref{ex:removal-example}, $g_4=902=550+352=g_2+g_3\in\langle G_3\rangle$. To remove $g_4$ from $G$, we compute $\pi_4(G)=(660,550,352,50,201)$, a telescopic sequence with $c(\pi_4(G))=(6,5,11,2)$ and $\langle \pi_4(G)\rangle=\langle G\rangle$.
\end{ex}

\subsubsection{Case 2}
Now we consider the case where $g_n=c_m g_m$ for $m>n$. Instead of just removing $g_n$ (like we did in Case 1), we will first swap the positions of $g_n$ and $g_m$ before removing $g_n$. As the following lemma states, this is equivalent to a permutation from $S_{k-1}$ acting on $\pi_n(G)$.

\begin{lemma}\label{lem:permutation} 
For $G\in\mathbb{N}_0^k$, suppose $1\le n<m\le k$ and consider the transposition $(n\; m)\in S_k$. Then $\pi_m ( (n\;m) (G)) = \sigma(\pi_n (G))$ for $\sigma\in S_{k-1}$ defined by 
\begin{equation}
\sigma(i)=
\begin{cases}
i, & \text{if $i<n$ or $i\ge m$;} \\ 
n, & \text{if $i=m-1$;} \\ 
i+1, & \text{if $n\le i< m-1$.} 
\end{cases} \end{equation}
\end{lemma}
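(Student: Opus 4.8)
The plan is to prove the identity by an explicit, position-by-position computation. Both $\pi_m((n\;m)(G))$ and $\pi_n(G)$ are obtained from $G$ by deleting a single copy of $g_n$, so as multisets they carry exactly the same entries; the entire content of the lemma is therefore that $\sigma$ records precisely how those entries are reshuffled between the two sequences. I would make this reshuffling explicit and then read off $\sigma$ from it.

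First I would unwind the left-hand side. Applying $(n\;m)$ to $G$ swaps the entries in positions $n$ and $m$, so afterwards position $n$ holds $g_m$, position $m$ holds $g_n$, and all other positions are unchanged. Applying $\pi_m$ then deletes position $m$ (the relocated $g_n$) and shifts the tail down by one index. The resulting length-$(k-1)$ sequence has entry $g_i$ in position $i$ for $1\le i\le n-1$, entry $g_m$ in position $n$, entry $g_i$ in position $i$ for $n+1\le i\le m-1$, and entry $g_{i+1}$ in position $i$ for $m\le i\le k-1$. By contrast, $\pi_n(G)$ deletes position $n$ directly, producing $P=(p_1,\dots,p_{k-1})$ with $p_i=g_i$ for $i<n$ and $p_i=g_{i+1}$ for $n\le i\le k-1$; in particular $g_m$ occupies position $m-1$ of $P$.

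Comparing the two, the net effect is that $g_m$ must move from position $m-1$ in $\pi_n(G)$ up to position $n$ in $\pi_m((n\;m)(G))$, which in turn pushes each of $g_{n+1},\dots,g_{m-1}$ one position to the right, while every entry outside the block originally indexed $n+1,\dots,m$ stays fixed. This is exactly the cyclic rotation encoded by $\sigma$, and I would confirm it by checking the three ranges of the definition against the explicit description above: for $i<n$ and for $i\ge m$ we have $\sigma(i)=i$ and $p_i$ is unmoved; for $n\le i<m-1$ we have $\sigma(i)=i+1$ and the entry $p_i=g_{i+1}$ reappears in position $i+1$; and for $i=m-1$ we have $\sigma(i)=n$ and the entry $p_{m-1}=g_m$ reappears in position $n$. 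Matching each case against the left-hand side completes the verification.

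I expect no genuine difficulty beyond careful index bookkeeping; the main things to watch are the boundary cases and the reading of the permutation action. In particular I would treat the degenerate case $m=n+1$ explicitly — there the middle range $n\le i<m-1$ is empty and $\sigma$ collapses to the identity, which agrees with the observation that swapping two adjacent entries and then deleting the second returns $\pi_n(G)$ unchanged. I would also state clearly at the outset the convention under which $\sigma$ acts on a sequence, since the definition of $\sigma$ encodes the displacement reading ``the entry in position $i$ moves to position $\sigma(i)$,'' and this must be applied consistently throughout so that $\sigma$ genuinely realizes the rearrangement computed above.
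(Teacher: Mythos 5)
Your proof is correct, and there is in fact nothing in the paper to compare it against: the paper states this lemma without any proof, treating it as routine index bookkeeping. Your position-by-position computation of both sides is accurate, the three ranges in the definition of $\sigma$ are matched correctly against the explicit description of $\pi_m((n\;m)(G))$, all $k-1$ positions are accounted for, and you handle the degenerate case $m=n+1$ explicitly. The one substantive point is the convention issue you flag at the end, and you are right to insist on it, because it is a wrinkle in the paper itself rather than in your argument: the action defined in Section \ref{sec:explicit-telescopic-construction} is $\sigma(G)=(g_{\sigma(1)},\dots,g_{\sigma(k)})$, i.e., position $i$ of $\sigma(G)$ receives the entry of $G$ in position $\sigma(i)$ (a pullback convention). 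Under that literal reading the lemma holds with $\sigma^{-1}$ in place of $\sigma$: writing $P=\pi_n(G)=(p_1,\dots,p_{k-1})$, for $m>n+2$ the $n$th entry of $\sigma(P)$ would be $p_{\sigma(n)}=p_{n+1}=g_{n+2}$, while the $n$th entry of $\pi_m((n\;m)(G))$ is $g_m$. Your proof instead uses the push-forward reading ($p_i$ lands in position $\sigma(i)$, i.e., the entry of $\sigma(P)$ in position $\sigma(i)$ is $p_i$), which is the convention under which the displayed $\sigma$ is the right one, and evidently what the author intended; note the two readings coincide when $\sigma$ is an involution, which is why the discrepancy only surfaces for $m>n+2$. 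Either way this is harmless downstream, since the sole use of the lemma, in Proposition \ref{prop:replace-the-multiple}, needs nothing more than that $\pi_m((n\;m)(G))$ is a permutation of $\pi_n(G)$.
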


We now have a lemma with two results about greatest common divisors which will be useful for the main result of the $g_n=c_mg_m$ case. Since $c_1$ is not defined, we will assume $n>1$ for the following lemma and proposition. We will then address the possibility of $n=1$ in Theorem \ref{thm:minimal-telescopic-sequence} with the help of the permutation $(1\;2)$.

\begin{lemma}\label{lem:gcd(qn,cj)=1}
For $G=(g_1, \dots, g_k)\in\mathbb{N}_0^k$ a telescopic sequence with $c(G)=(c_2, \dots, c_k)$ and $\gcd(G)=d$, if $g_n=c_m g_m$ for some $m,n$ with $1<n<m\le k$, 
then $g_n=z_n C_{n,k}$ with $z_n\in\langle d\rangle$ and $\gcd(z_n/d, c_n)=1$. In addition we have $\gcd(z_n/d, c_m)=1$ and $\gcd(c_j, c_m)=1$ for all $n<j<m$.
\end{lemma}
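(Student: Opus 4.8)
The plan is to lean on the explicit description of telescopic sequences from Corollary~\ref{cor:explicit-description-telescopic}. Applying that corollary to $G$ with $z_1=d$, the first assertion is immediate: for the index $n$ we obtain $z_n\in\langle d\rangle$ with $g_n=z_nC_{n,k}$ and $\gcd(z_n/d,c_n)=1$, which is exactly the opening claim. The same corollary applied at the index $m$ furnishes $z_m\in\langle d\rangle$ with $g_m=z_mC_{m,k}$ and, crucially, $\gcd(z_m/d,c_m)=1$. These divisibility facts are the raw material; the remaining work is to transfer the coprimality living at index $m$ back onto $z_n$ and onto the intermediate $c_j$.

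The key step is to extract a relation between $z_n$ and $z_m$ from the hypothesis $g_n=c_mg_m$. Substituting the two expressions above gives $z_nC_{n,k}=c_mz_mC_{m,k}$. Since $C_{n,k}=C_{n,m}C_{m,k}$ and $C_{n,m}=c_mC_{n,m-1}$, and since every $c_j>0$ so that all of these products are nonzero and may be cancelled (per Remark~\ref{rmk:first-term-nonzero}), this collapses to
\[
z_m=z_n\,C_{n,m-1}.
\]
Dividing through by $d$, which is legitimate because $d\mid z_n$ and $d\mid z_m$, yields $z_m/d=(z_n/d)\,C_{n,m-1}$. The main obstacle, such as it is, is purely bookkeeping: one must split $C_{n,k}$ correctly into $C_{n,m-1}$, the factor $c_m$, and $C_{m,k}$, and check that each cancellation is valid. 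There is no deeper difficulty here.

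With this relation in hand, the two ``in addition'' conclusions drop out of the single coprimality $\gcd(z_m/d,c_m)=1$. Rewriting it as $\gcd\bigl((z_n/d)\,C_{n,m-1},\,c_m\bigr)=1$ and using that a factor of a product coprime to $c_m$ is itself coprime to $c_m$, I would immediately read off $\gcd(z_n/d,c_m)=1$. Moreover, since $C_{n,m-1}=\prod_{j=n+1}^{m-1}c_j$, each $c_j$ with $n<j<m$ divides $C_{n,m-1}$, so $\gcd(c_j,c_m)$ divides $\gcd(C_{n,m-1},c_m)=1$, giving $\gcd(c_j,c_m)=1$ for all $n<j<m$. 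When $m=n+1$ the product $C_{n,m-1}=C_{n,n}=1$ is empty and this last claim is vacuous, consistent with the convention $C_{m,m}=1$. This establishes all three assertions.
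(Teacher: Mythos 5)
Your proof is correct and follows essentially the same route as the paper: both apply the explicit description of telescopic sequences to indices $n$ and $m$, cancel the common factors in $z_nC_{n,k}=c_mz_mC_{m,k}$ to obtain $z_m=z_nC_{n,m-1}$, and then transfer the single coprimality $\gcd(z_m/d,c_m)=1$ to deduce both remaining claims. Your added bookkeeping (the explicit splitting $C_{n,k}=c_mC_{n,m-1}C_{m,k}$ and the vacuous case $m=n+1$) is consistent with, and slightly more detailed than, the paper's argument.
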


\begin{proof}
By Proposition \ref{prop:explicit-description-telescopic}, $g_n=z_n C_{n,k}$ with $\gcd(z_n/d, c_n)=1$ and $g_m=z_m C_{m,k}$ with $\gcd(z_m/d, c_m)=1$. Since $g_n=c_m g_m$, we have $z_n C_{n,k} = c_m z_m C_{m,k}$, so $z_n C_{n,m-1} = z_m$. Therefore $(z_n/d) C_{n,m-1} = (z_m/d)$.

For the first greatest common divisor statement, $(z_n/d)\mid (z_m/d)$, so $\gcd(z_n/d, c_m)\mid \gcd(z_m/d, c_m)$, which is 1. Thus $\gcd(z_n/d, c_m)=1$.

For the second greatest common divisor statement, for $n<j<m$ since $c_j\mid C_{n,m-1}$, $c_j\mid (z_m/d)$. Then $\gcd(c_j, c_m)\mid \gcd(z_m/d, c_m)$, which is 1. Thus $\gcd(c_j, c_m)=1$.
\end{proof}

We now have our main result for this case.

\begin{prop}\label{prop:replace-the-multiple}
For $G=(g_1, \dots, g_k)\in\mathbb{N}_0^k$ telescopic with $c(G)=(c_2, \dots, c_k)$, suppose $g_n=c_m g_m$ for some $m,n$ with $1<n<m\le k$.
For the transposition $(n\; m)\in S_k$, the sequence $\pi_m((n\; m)(G))$ is telescopic and $\langle \pi_m((n\; m)(G))\rangle = \langle G\rangle$.
\end{prop}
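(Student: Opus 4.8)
The plan is to reduce the statement to the Case~1 removal already handled in Corollary~\ref{cor:remove_gn_telescopic}. Write $G'=(n\;m)(G)$ for the sequence obtained from $G$ by interchanging the entries in positions $n$ and $m$, so the $n$th entry of $G'$ is $g_m$, the $m$th entry of $G'$ is $g_n$, and all other entries agree with $G$; then $\pi_m(G')=\pi_m((n\;m)(G))$ is exactly the sequence in the statement. Since $G'$ is a permutation of $G$, the identity $\langle G'\rangle=\langle G\rangle$ is immediate. The strategy has three steps: first show $G'$ is telescopic; then observe that the $m$th entry of $G'$ lies in $\langle G'_{m-1}\rangle$; and finally apply Corollary~\ref{cor:remove_gn_telescopic} to delete that entry.

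For the first and hardest step, I would compute $c(G')=(e'_2,\dots,e'_k)$ by determining $\gcd(G'_i)$ for each $i$, writing $d=\gcd(G)$. When $i<n$ we have $G'_i=G_i$, and when $i\ge m$ the entries of $G'_i$ are a rearrangement of $g_1,\dots,g_i$ (all terms through $g_m$ being present), so in both ranges $\gcd(G'_i)=\gcd(G_i)=C_{i,k}d$ by Lemma~\ref{lem:gcd-and-Cik}. The crux is the middle range $n\le i\le m-1$, where $G'_i$ carries $g_m$ in place of $g_n$. Writing $g_j=z_jC_{j,k}$ as in Proposition~\ref{prop:explicit-description-telescopic} and factoring out the common product of $c_j$'s, one must make the factor $c_m$ cancel, and this is precisely where the coprimality relations $\gcd(z_n/d,c_m)=1$ and $\gcd(c_j,c_m)=1$ for $n<j<m$ from Lemma~\ref{lem:gcd(qn,cj)=1} enter. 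The outcome I expect is $\gcd(G'_i)=C_{i,k}d/c_m$ for $n\le i\le m-1$, giving $e'_n=c_nc_m$, then $e'_j=c_j$ for $n<j<m$, $e'_m=1$, and $e'_j=c_j$ for $j>m$.

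With $c(G')$ in hand, I would verify the telescopic condition $e'_jg'_j\in\langle G'_{j-1}\rangle$ range by range. The cases $j<n$ and $j>m$ are immediate from telescopicness of $G$. At $j=n$ the condition reads $c_nc_mg_m=c_ng_n\in\langle G_{n-1}\rangle$, again inherited from $G$. For $n<j<m$ one has $c_jg_j\in\langle G_{j-1}\rangle$, and wherever $g_n$ occurs in such an expression it may be replaced, via $g_n=c_mg_m$, by a multiple of $g_m$, which is an entry of $G'_{j-1}$; hence $c_jg_j\in\langle G'_{j-1}\rangle$. This substitution, powered by the single hypothesis $g_n=c_mg_m$, is the recurring mechanism throughout the verification, including the case $j=m$, where $e'_mg'_m=g_n=c_mg_m\in\langle g_m\rangle\subseteq\langle G'_{m-1}\rangle$.

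Finally, the $m$th entry of $G'$ is $g_n=c_mg_m$, and since $g_m$ sits in position $n<m$ of $G'$ we have $g_n\in\langle g_m\rangle\subseteq\langle G'_{m-1}\rangle$. Applying Corollary~\ref{cor:remove_gn_telescopic} to the telescopic sequence $G'$, whose first entry $g_1>0$, then yields that $\pi_m(G')$ is telescopic with $\langle\pi_m(G')\rangle=\langle G'\rangle=\langle G\rangle$; as $\pi_m(G')$ is the sequence in the statement, this finishes the argument. The main obstacle is the middle-range gcd computation: the cancellation of $c_m$ is not formal and genuinely depends on the coprimality conclusions of Lemma~\ref{lem:gcd(qn,cj)=1}.
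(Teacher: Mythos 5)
Your proposal is correct, but it is structured differently from the paper's proof, and the difference is worth noting. The paper works directly with the length-$(k-1)$ sequence $H=\pi_m((n\;m)(G))$: it gets $\langle H\rangle=\langle G\rangle$ from Lemma~\ref{lem:permutation}, computes $c(H)$, and verifies the telescopic condition $e_jh_j\in\langle H_{j-1}\rangle$ over five ranges of indices in one monolithic argument. You instead insert an intermediate object, the full-length swapped sequence $G'=(n\;m)(G)$, prove that \emph{it} is telescopic with $c(G')=(c_2,\dots,c_{n-1},\,c_nc_m,\,c_{n+1},\dots,c_{m-1},\,1,\,c_{m+1},\dots,c_k)$, and then observe that its $m$th entry $g'_m=g_n=c_mg_m$ lies in $\langle G'_{m-1}\rangle$, so the removal is exactly a Case~1 removal and Corollary~\ref{cor:remove_gn_telescopic} finishes the job, giving both telescopicness of $\pi_m(G')$ and $\langle \pi_m(G')\rangle=\langle G'\rangle=\langle G\rangle$ for free. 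The mathematical crux is identical in the two arguments: your predicted values $\gcd(G'_i)=C_{i,k}d/c_m$ for $n\le i\le m-1$ are precisely what the paper establishes for its sequence $H$ (by induction on $i$, factoring out $C_{i,k}d/c_m$ and invoking $\gcd(z_n/d,c_n)=\gcd(z_n/d,c_m)=1$ and $\gcd(c_j,c_m)=1$ for $n<j<m$ from Lemma~\ref{lem:gcd(qn,cj)=1}), so in a final write-up you should carry out that induction rather than only state its expected outcome. What your route buys: cleaner bookkeeping (the indices of $G'$ align with those of $G$, so there is no shift during the verification), reuse of the $\rho_m$/Case~1 machinery already proved, and an intermediate fact of independent interest, namely that the transposition $(n\;m)$ preserves telescopicness under the hypothesis $g_n=c_mg_m$, extending Proposition~\ref{prop:swap-first-two} beyond the transposition $(1\;2)$. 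What the paper's route buys: a self-contained verification that never needs the swapped length-$k$ sequence and exhibits the $c$-sequence of the final object directly, namely $c(H)=(c_2,\dots,c_{n-1},\,c_nc_m,\,c_{n+1},\dots,c_{m-1},\,c_{m+1},\dots,c_k)$.
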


\begin{proof}
For $H=\pi_m((n\; m)(G))$, by Lemma \ref{lem:permutation}, $H$ is a permutation of $\pi_n(G)$, and since $g_n\in\langle \pi_n(G)\rangle$, 
$\langle H\rangle=\langle\pi_n(G)\rangle=\langle G\rangle$.

We have $H=(h_1, \dots, h_{k-1})\in\mathbb{N}_0^{k-1}$ where 
\[h_i = 
\begin{cases}
g_i, & \text{for $i=1, \dots, m-1$ with $i\ne n$;} \\
g_m, & \text{for $i=n$;} \\
g_{i+1}, & \text{for $i=m, \dots, k-1$.}
\end{cases}
\]
Let $c(H)=(e_2, \dots, e_{k-1})$. 
To show $H$ is telescopic, we must show $e_j h_j\in \langle H_{j-1}\rangle$ for $2\le j\le k-1$.
We will consider five subintervals of indices: $2\le j < n$; $j=n$; $n<j<m$; $j=m$; and $m< j\le k-1$.

Since $H_{n-1}=G_{n-1}$ and $G$ is telescopic, by Lemma \ref{lem:shortened-telescopic} $H_{n-1}$ is telescopic, which implies that $e_jh_j\in\langle H_{j-1}\rangle$ for $2\le j\le n-1$.

Suppose $j=n$. Let $d=\gcd(G)$. By Corollary \ref{cor:explicit-description-telescopic}, $g_n=z_nC_{n,k}$ for some $z_n\in d\mathbb{N}_0$ with $\gcd(z_n/d, c_n)=1$. Then \begin{align*}
\gcd(H_n)
&=\gcd(h_1, \dots, h_n)\\
&=\gcd(g_1, \dots, g_{n-1}, g_m)\\
&=\gcd(\gcd(G_{n-1}), g_n/c_m)\\
&=\gcd(C_{n-1,k}d, z_nC_{n,k}/c_m) \\
&=\frac{C_{n,k}d}{c_m}\gcd(c_m c_n, z_n/d).
\end{align*}
Note that $C_{n,k}/c_m\in\mathbb{N}$ since $m>n$. Since $\gcd(z_n/d, c_n)=1$ and $\gcd(z_n/d, c_m)=1$ (the latter from Lemma \ref{lem:gcd(qn,cj)=1}), $\gcd(H_n)=C_{n,k}d/c_m$. Since $\gcd(H_{n-1})=\gcd(G_{n-1})$, we have $e_n=\gcd(H_{n-1})/\gcd(H_n)=\gcd(G_{n-1})/(C_{n,k}d/c_m)=c_mc_n$, so $e_nh_n=c_mc_n g_n/c_m = c_ng_n$ which is in $\langle G_{n-1}\rangle$ since $G$ is telescopic. And since $H_{n-1}=G_{n-1}$, we have $e_nh_n\in\langle H_{n-1}\rangle$.

From the $j=n$ case, we saw $\gcd(H_n)=C_{n,k}d/c_m$. By Corollary \ref{cor:explicit-description-telescopic} again, $g_{n+1}=z_{n+1}C_{n+1,k}$ for some $z_{n+1}\in d\mathbb{N}$ with $\gcd(z_{n+1}/d,c_{n+1})=1$. Then, 
\begin{align*}
\gcd(H_{n+1})
&=\gcd(g_1, \dots, g_{n-1}, g_m, g_{n+1})\\
&=\gcd(C_{n,k}d/c_m, g_{n+1})\\
&= \gcd(C_{n,k}d/c_m, z_{n+1}C_{n+1, k})\\
&= \frac{C_{n+1,k}d}{c_m} \gcd(c_{n+1}, c_m z_{n+1}/d)\\
&= \frac{C_{n+1,k}d}{c_m},
\end{align*}
since $\gcd(c_{n+1},z_{n+1}/d)=1$ and $\gcd(c_{n+1}, c_m)=1$ (the latter from Lemma \ref{lem:gcd(qn,cj)=1}).

Continuing in this way, we find that $\gcd(H_i)=C_{i,k}d/c_m$ for $n\le i<m$. Therefore, for $n<j<m$, $e_j=\gcd(H_{j-1})/\gcd(H_j)=c_j$. Since $G$ is telescopic, $c_jg_j\in\langle G_{j-1}\rangle$. With $g_n=c_m g_m$ and $h_n=g_m$, we have $g_n\in\langle h_n\rangle$, so $\langle G_{j-1}\rangle\subset\langle H_{j-1}\rangle$. Thus $e_jh_j=c_jg_j\in\langle G_{j-1}\rangle\subseteq\langle H_{j-1}\rangle$ for $n<j<m$.

Suppose $j=m$. We first note that $\langle G_m\rangle=\langle H_{m-1}\rangle$. Then, since $H_m$ is a permutation of $\pi_n(G_{m+1})$, we have $\langle H_m\rangle=\langle \pi_n(G_{m+1})\rangle=\langle G_{m+1}\rangle$, the latter equality holding because $g_n\in\langle g_m\rangle\subset\langle \pi_n(G_{m+1})\rangle$. Thus $\gcd(H_m)=\gcd(G_{m+1})=C_{m+1,k}d$, from which it follows that $e_m=\gcd(H_{m-1})/\gcd(H_m)=(C_{m-1,k}d/c_m)/(C_{m+1,k}d)=c_{m+1}$. Since $G$ is telescopic, $c_{m+1}g_{m+1}\in\langle G_m\rangle$. Then $e_mh_m=c_{m+1}g_{m+1}\in\langle G_m\rangle=\langle H_{m-1}\rangle$.

Finally, for $m\le i\le k-1$, $H_i$ is a permutation of $\pi_n(G_{i+1})$, so $\langle H_i\rangle=\langle G_{i+1}\rangle$ and $\gcd(H_i)=\gcd(G_{i+1})=C_{i+1,k}d$. Then for $m<j\le k-1$, $e_j=\gcd(H_{j-1})/\gcd(H_j)=\gcd(G_{j})/\gcd(G_{j+1})=c_{j+1}$. Since $G$ is telescopic, $c_{j+1}g_{j+1}\in \langle G_j\rangle$. Thus $e_jh_j=c_{j+1}g_{j+1}\in\langle G_{j}\rangle=\langle H_{j-1}\rangle$ for $m< j\le k-1$.
\end{proof}

As a result, we can therefore remove $g_n$ and reorder the remaining terms to produce a shorter telescopic sequence which generates the same submonoid.

\begin{ex}\label{ex:case2}
For the telescopic sequence $G=(660,550,352,902,50,201)$ with $c(G)=(6,5,1,11,2)$ from Example \ref{ex:removal-example}, $g_2=550=11\cdot50=c_5g_5$. To remove $g_2$ from $G$, we compute \[\pi_5((2\;5)(G))=\pi_5(660,50,352,902,550,201)=(660,50,352,902,201).\] Then $c(\pi_5((2\;5)(G)))=(66,5,1,2)$, $\pi_5((2\;5)(G))$ is telescopic, and $\langle \pi_5((2\;5)(G))\rangle=\langle G\rangle$.
\end{ex}

\subsection{Answers to Questions \ref{q:1} and \ref{q:2}}
If a telescopic sequence $G$ is not minimal, then there is some $n$ such that $g_n\in\langle\pi_n(G)\rangle$. In Cases 1 and 2, we saw how to remove $g_n$ and produce a shorter telescopic sequence which generates the same submonoid. We can now answer Question \ref{q:1}.

\begin{thm}\label{thm:minimal-telescopic-sequence}
Let $G\in\mathbb{N}_0^k$ be telescopic. Then there exists a minimal telescopic sequence $G'$ such that $\langle G'\rangle=\langle G\rangle$.
\end{thm}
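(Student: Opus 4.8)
The plan is to prove Theorem \ref{thm:minimal-telescopic-sequence} by an induction on the length $k$ of the sequence, using Proposition \ref{prop:two-cases} as the engine that drives each reduction step. The base cases $k=1$ and $k=2$ are immediate, since every such sequence is telescopic and (after discarding any repeats or zeros) minimal. For the inductive step, I would assume the result holds for all telescopic sequences of length less than $k$ and consider a telescopic $G\in\mathbb{N}_0^k$. If $G$ is already minimal, we are done by taking $G'=G$. Otherwise, there exists some index $n$ with $g_n\in\langle\pi_n(G)\rangle$, and the goal is to remove one term so as to obtain a strictly shorter \emph{telescopic} sequence generating the same submonoid; the inductive hypothesis then finishes the argument.

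The heart of the step is to apply Proposition \ref{prop:two-cases}, which dichotomizes the failure of minimality into exactly two cases: either $g_n\in\langle G_{n-1}\rangle$ (Case 1) or $g_n=c_m g_m$ for some $m>n$ (Case 2). In Case 1, I would invoke Corollary \ref{cor:remove_gn_telescopic}: it guarantees that $n>1$, that $c_n=1$, and that $\pi_n(G)$ is telescopic with $\langle\pi_n(G)\rangle=\langle G\rangle$. In Case 2, I would use Proposition \ref{prop:replace-the-multiple}, which (after possibly handling $n=1$) produces the telescopic sequence $\pi_m((n\;m)(G))$ with the same generated submonoid. In both cases the resulting sequence has length $k-1$ and generates $\langle G\rangle$, so the inductive hypothesis yields a minimal telescopic $G'$ with $\langle G'\rangle=\langle G\rangle$.

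The one genuine wrinkle, flagged in the excerpt itself, is the hypothesis $1<n<m$ in Lemma \ref{lem:gcd(qn,cj)=1} and Proposition \ref{prop:replace-the-multiple}, which excludes $n=1$ because $c_1$ is undefined. To address $n=1$ in Case 2, I would first apply the transposition $(1\;2)\in S_k$. By Proposition \ref{prop:swap-first-two}, $(1\;2)(G)$ is telescopic and generates the same submonoid, and the offending term $g_1$ now sits in position $2$, so that the relation $g_1=c_m g_m$ becomes a relation of the form handled by Proposition \ref{prop:replace-the-multiple} with index $2$ rather than $1$. I would check that this repositioning really does satisfy the $1<n<m$ hypothesis and does not disturb the telescopic property, then proceed as before. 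This bookkeeping around the first term is, I expect, the main obstacle: not deep, but requiring care to confirm that $(1\;2)$ leaves us in a legitimate instance of Case 2.

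Because each reduction strictly decreases the length while preserving both telescopicness and the generated submonoid, the process terminates, and the induction delivers a minimal telescopic $G'$ with $\langle G'\rangle=\langle G\rangle$. I would note that this simultaneously answers Question \ref{q:1}, and that an identical termination argument (tracking which terms survive) shows the resulting $G'$ is a permutation of the unique minimal subsequence of $G$, which is what is needed for the more general Question \ref{q:2}.
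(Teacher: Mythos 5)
Your proposal is correct and takes essentially the same route as the paper: the paper's proof is exactly your reduction step---Proposition \ref{prop:two-cases} to split into the two cases, Corollary \ref{cor:remove_gn_telescopic} for Case 1, Proposition \ref{prop:replace-the-multiple} (preceded by the transposition $(1\;2)$ when $n=1$) for Case 2---phrased as an iterated removal that terminates by length rather than as an induction. The only bookkeeping the paper adds, which your write-up should absorb, is to remove all Case 1 terms first, which eliminates any zero entries so that $g_2>0$ whenever the $n=1$ swap is invoked; also note that after the swap the offending relation may fall under Case 1 rather than Case 2 (e.g.\ when $m=2$), which is harmless since Corollary \ref{cor:remove_gn_telescopic} then applies.
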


\begin{proof}
If $G$ is minimal, we are done. Otherwise, for $G=(g_1, \dots, g_k)$, there is some $n$ for which $g_n\in\langle \pi_n(G) \rangle$. By Proposition \ref{prop:two-cases}, either $g_n\in\langle G_{n-1}\rangle$ (Case 1) or, for $c(G)=(c_2, \dots, c_k)$, $g_n=c_m g_m$ for some $m>n$ (Case 2).

For Case 1, if $g_n\in\langle G_{n-1}\rangle$, then by Corollary \ref{cor:remove_gn_telescopic}, $\pi_n(G)$ is a telescopic sequence of length $k-1$ such that $\langle \pi_n(G)\rangle = \langle G\rangle$.

For Case 2, if $g_n=c_m g_m$ for some $m>n$, we consider two cases of $n$. If $n>1$, then by Proposition \ref{prop:replace-the-multiple}, $\pi_m((n\; m)(G))$ is a telescopic sequence of length $k-1$ such that $\langle \pi_m((n\; m)(G))\rangle = \langle G\rangle$. 

If $n=1$ and $g_2>0$, we transpose the first two terms and use the previous paragraph to remove the (new) second term. Let $H=(1\;2)(G)$. We have $H=(h_1,\dots,h_k)$, a telescopic sequence (by Proposition \ref{prop:swap-first-two}) with $h_1>0$, $h_2\in\langle \pi_2(H)\rangle$, and $\langle H\rangle=\langle G\rangle$. By Proposition \ref{prop:replace-the-multiple}, $\pi_m((2\; m)(H))$ is a telescopic sequence of length $k-1$ such that $\langle \pi_m((2\; m)(H))\rangle=\langle H\rangle=\langle G\rangle$.

Now, to construct the minimal telescopic sequence, starting with $G$ we first iteratively remove all of the Case 1 terms (in any order). Observe that this removes any zeros. Then, from the resulting sequence, we can iteratively remove all of the Case 2 terms (with $n>1$ and $n=1$). Since $G\in\mathbb{N}_0^k$ and we remove one term which each step, this process terminates in fewer than $k$ steps, producing a telescopic sequence $G'$ that is necessarily minimal and satisfies $\langle G'\rangle=\langle G\rangle$.
\end{proof}

\begin{ex}
Once again using the telescopic sequence $G=(660,550,352,902,50,201)$ with $c(G)=(6,5,1,11,2)$ from Example \ref{ex:removal-example}, we have $g_n\in\langle\pi_n(G)\rangle$ for $n=2$ and $n=4$. In Example \ref{ex:case1}, we removed $g_4$ from $G$ to get the telescopic sequence $H=\pi_4(G)=(660,550,352,50,201)$ with $c(H)=(6,5,11,2)$. In Example \ref{ex:case2}, we removed $g_2$ from $G$ to get the telescopic sequence $\bar{H}=\pi_5((2\;5)(G))=(660,50,352,902,201)$ with $c(\bar{H})=(66,5,1,2)$. Of course, neither of these resulting sequences is minimal.

In $H=(h_1,\dots,h_5)$, $h_2=11 h_4$, which is Case 2. We therefore compute $\pi_4((2\;4)(H))=\pi_4(660,50,352,550,201)=(660,50,352,201)$, a telescopic sequence that generates the same submonoid as $H$. Observe that this sequence is also minimal. (On its own, this paragraph addresses the motivating example in Example \ref{ex:question-example}.)

In $\bar{H}=(\bar{h}_1,\dots,\bar{h}_5)$, $\bar{h}_4=11\bar{h}_2+1\bar{h}_3$, which is Case 1. We therefore compute $\pi_4(\bar{H})=(660,50,352,201)$, which is the same sequence as in the previous paragraph.

The result is that, starting with the telescopic sequence $G=(660,550,352,902,50,201)$, after two steps we produce the sequence $G'=(660,50,352,201)$ for which $c(G')=(66,5,2)$. Then $G'$ is telescopic and minimal, and $\langle G'\rangle=\langle G\rangle$.
\end{ex}

Since a free numerical semigroup is one which is generated by a (not necessarily minimal) telescopic sequence $G$ with $\gcd(G)=1$, we get the following.
\begin{cor}\label{cor:free-gen-by-telescopic}
Suppose $S$ is a free numerical semigroup. Then $S$ is generated by a telescopic sequence which is minimal.
\end{cor}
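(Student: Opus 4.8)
The plan is to recognize that this corollary is an immediate consequence of Theorem \ref{thm:minimal-telescopic-sequence}, so essentially no new work is required. First I would unwind the definition: since $S$ is a free numerical semigroup, by definition there exists a telescopic sequence $G$ with $\gcd(G)=1$ such that $S=\langle G\rangle$. This is the only hypothesis we have, and it is exactly the input that Theorem \ref{thm:minimal-telescopic-sequence} expects.

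Next I would apply Theorem \ref{thm:minimal-telescopic-sequence} directly to this $G$. The theorem guarantees a minimal telescopic sequence $G'$ with $\langle G'\rangle=\langle G\rangle$. Since $\langle G\rangle=S$, we immediately get $\langle G'\rangle=S$, so $S$ is generated by the telescopic and minimal sequence $G'$, which is precisely the claim. One could additionally remark that $\gcd(G')=\gcd(\langle G'\rangle)=\gcd(S)=1$ by Lemma \ref{lem:gcd-set-semigroup}, confirming that $G'$ is of the expected form, though this is not strictly needed for the statement as written.

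I do not expect any genuine obstacle here: all the substantive content -- the two-case analysis of Proposition \ref{prop:two-cases} and the telescopicness-preserving removals in Corollary \ref{cor:remove_gn_telescopic} and Proposition \ref{prop:replace-the-multiple} -- has already been absorbed into Theorem \ref{thm:minimal-telescopic-sequence}. The corollary's role is simply to phrase that theorem in the language of free numerical semigroups, so the proof amounts to a single invocation of the theorem after translating the definition of ``free'' into ``generated by a telescopic sequence.''
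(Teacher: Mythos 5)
Your proof is correct and matches the paper exactly: the paper treats this corollary as an immediate consequence of Theorem \ref{thm:minimal-telescopic-sequence}, prefacing it only with the observation that a free numerical semigroup is by definition generated by a telescopic sequence with gcd $1$, which is precisely your argument. Your optional remark via Lemma \ref{lem:gcd-set-semigroup} that $\gcd(G')=1$ is a nice (if unneeded) touch.
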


We conclude this section by answering Question \ref{q:2}.
\begin{cor}
Let $G$ and $H$ be sequences with $\langle G\rangle=\langle H\rangle$. Then a permutation of $G$ is telescopic if and only if a permutation of $H$ is telescopic.
\end{cor}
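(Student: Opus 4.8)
The plan is to prove the single implication that a telescopic permutation of $G$ forces a telescopic permutation of $H$; the biconditional then follows at once by interchanging the roles of $G$ and $H$, since the hypothesis $\langle G\rangle=\langle H\rangle$ is symmetric. So suppose $\sigma(G)$ is telescopic for some $\sigma\in S_k$, and set $S=\langle G\rangle=\langle H\rangle=\langle\sigma(G)\rangle$. Telescopicness of $\sigma(G)$ forces $S\ne\{0\}$, so $S$ has a well-defined, nonempty, unique minimal generating set $A$ (as recalled in Section \ref{sec:new-intro}).

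The first step is to pass to the minimal case. Applying Theorem \ref{thm:minimal-telescopic-sequence} to the telescopic sequence $\sigma(G)$ produces a \emph{minimal} telescopic sequence $G'$ with $\langle G'\rangle=S$. Because $G'$ is minimal it has distinct, nonzero terms, and its underlying set is precisely $A$. The second step is to locate $A$ inside $H$. Let $H'$ be the unique minimal subsequence of $H$; then $\langle H'\rangle=S$ and $H'$ is minimal, so its terms are again exactly the elements of $A$, each occurring once. Consequently $G'$ and $H'$ are two orderings of the same set $A$, i.e. $H'$ is a permutation of $G'$; in particular the terms of $H'$ form a sub-multiset of the terms of $H$, and the complementary subsequence (the terms of $H$ not used by $H'$) is well defined.

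The final step is to re-insert the redundant terms. Each term of $H$ lying in that complementary subsequence belongs to $S=\langle G'\rangle$. Form the sequence consisting of $G'$ followed by these complementary terms in any order. Appending them one at a time, each append is an instance of $\tau_{h,1}$, which is legal since $h\in S$ equals the submonoid generated by the current sequence and $\gcd(h,1)=1$; by Proposition \ref{prop:telescopic-operations-tau} (with $m=1$) this preserves telescopicness. Since the current sequence always generates $S$, every remaining term stays available, and after finitely many appends we obtain a telescopic sequence whose multiset of terms is $A$ together with the complementary terms, which is exactly the multiset of terms of $H$. That sequence is therefore a telescopic permutation of $H$, as required.

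The one genuinely nontrivial point is the identification in the second step of $H'$ as a permutation of $G'$: this is where the uniqueness of the minimal generating set does the real work, upgrading the set-level coincidence $\langle G'\rangle=\langle H'\rangle=S$ to the combinatorial statement that $G'$ and $H'$ list the same elements. Everything else is bookkeeping, resting on the observation---immediate from Proposition \ref{prop:telescopic-operations-tau} with $m=1$---that one may freely append any element of the generated submonoid, including repeated generators and zeros, to a telescopic sequence without destroying telescopicness.
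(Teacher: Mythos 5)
Your proof is correct and follows essentially the same route as the paper: reduce to a minimal telescopic sequence $G'$ via Theorem \ref{thm:minimal-telescopic-sequence}, use uniqueness of the minimal generating set to place the terms of $G'$ inside $H$, and then append the leftover terms of $H$ (all lying in $\langle G'\rangle$) without breaking telescopicness. The only difference is cosmetic: where the paper checks directly that $c(G'\times H')=c(G')\times(1,\dots,1)$ satisfies the telescopic conditions, you invoke Proposition \ref{prop:telescopic-operations-tau} with $m=1$ iteratively, which is the same fact packaged through the $\tau_{g,m}$ machinery.
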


\begin{proof}
Suppose $G$ is telescopic and that $\langle G\rangle=\langle H\rangle$. By Theorem \ref{thm:minimal-telescopic-sequence} there is a minimal telescopic sequence $G'$ such that $\langle G'\rangle=\langle G\rangle$, so $\langle G'\rangle=\langle H\rangle$ as well. Since $G'$ is minimal, each term in $G'$ must appear in $H$. Consider the permutation \[\sigma(H)=G' \times H',\] where $H'$ consists of the remaining terms of $H$ not included in $G'$. Each term in $H'$ is in $\langle G'\rangle$, so $c(\sigma(H))=c(G')\times(1, \dots, 1)$. Since $G'$ is telescopic, the telescopic conditions hold for the $G'$ portion of $\sigma(H)$. And since 1 times each term in $H'$ is in $\langle G'\rangle$, the telescopic conditions hold for the $H'$ portion of $\sigma(H)$. Therefore, $\sigma(H)$ is a telescopic sequence.

Switching the roles of $G$ and $H$, we find the reverse implication holds as well.
\end{proof}

\section{Construction of minimal telescopic sequences with desired properties}\label{sec:explicit-minimal-telescopic-construction}
We now look at necessary and sufficient conditions for a constructed telescopic sequence to be minimal.  Suppose $G=(g_1, \dots, g_k)\in\mathbb{N}_0^k$ and $c(G)=(c_2, \dots, c_k)$. If $G$ is telescopic and not minimal, then $g_n\in\langle \pi_n(G)\rangle$ for some $n$, so by Proposition \ref{prop:two-cases}, either 
\begin{enumerate}
\item $g_n\in \langle G_{n-1}\rangle$; or
\item there is some $m>n$ such that $g_m \mid g_n$.
\end{enumerate}
The first case occurs precisely when $c_n=1$, which we can avoid by requiring $c_j>1$ for all $j=2, \dots, k$. The second case occurs precisely when $z_mC_{m,k}\mid z_nC_{n,k}$, which we can avoid by requiring $z_j\nmid z_i C_{i,j}$ for all $i$, $j$ with $1\le i<j\le k$. 

\begin{cor}\label{cor:explicit-min}
Suppose $G=(g_1, \dots, g_k)$ is a telescopic sequence (with notation as in Corollary \ref{cor:explicit-description-telescopic}). Then $G$ is minimal if and only if we additionally have $c_j>1$ for $j=2, \dots, k$ and $z_j \nmid z_i C_{i,j}$ for all $1\le i<j\le k$.
\end{cor}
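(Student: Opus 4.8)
The plan is to characterize exactly when the telescopic sequence $G$ fails to be minimal and then read off the two stated conditions by negation. The discussion preceding the statement, together with Proposition \ref{prop:two-cases}, already records that $G$ is not minimal precisely when, for some index $n$, either (Case 1) $g_n\in\langle G_{n-1}\rangle$, or (Case 2) $g_n=c_mg_m$ for some $m>n$. So the task reduces to translating each case into one of the two stated conditions on the $c_j$ and $z_j$.

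First I would dispose of Case 1. Since $g_1>0$ rules out $n=1$, Corollary \ref{cor:remove_gn_telescopic} shows that $g_n\in\langle G_{n-1}\rangle$ forces $c_n=1$; conversely, if $c_j=1$ for some $j\ge2$, then telescopicity gives $g_j=c_jg_j\in\langle G_{j-1}\rangle$, which is exactly Case 1. Hence Case 1 occurs for some index if and only if $c_j=1$ for some $j$, and requiring $c_j>1$ for all $j$ is precisely its negation.

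Next I would handle Case 2 by converting it into a divisibility statement on the $z$'s. Using $g_i=z_iC_{i,k}$ and the factorization $C_{i,k}=C_{i,j}C_{j,k}$ for $i<j$, the divisibility $g_j\mid g_i$ is equivalent to $z_j\mid z_iC_{i,j}$, i.e.\ to the failure of $z_j\nmid z_iC_{i,j}$ at that pair. The one delicate point is that Proposition \ref{prop:two-cases} produces the sharp equality $g_n=c_mg_m$, whereas the stated condition only records the weaker divisibility $g_m\mid g_n$. I would reconcile these by a cycle of implications: ``$G$ is not minimal'' gives (Case 1 or $g_n=c_mg_m$) by Proposition \ref{prop:two-cases}; the equality $g_n=c_mg_m$ trivially implies $g_m\mid g_n$; and any divisibility $g_m\mid g_n$ with $m>n$ places $g_n\in\langle g_m\rangle\subseteq\langle\pi_n(G)\rangle$, so $G$ is not minimal again. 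Thus non-minimality, the equality form, and the divisibility form all coincide, and it is the divisibility form that translates cleanly into $z_j\mid z_iC_{i,j}$.

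Combining the two translations, $G$ fails to be minimal exactly when $c_j=1$ for some $j$ or $z_j\mid z_iC_{i,j}$ for some $i<j$; negating both disjuncts yields the corollary. I expect the reconciliation in Case 2 --- passing between the exact equality $g_n=c_mg_m$ furnished by Proposition \ref{prop:two-cases} and the divisibility $z_j\mid z_iC_{i,j}$ appearing in the statement --- to be the only genuinely subtle step, with the remainder being routine bookkeeping with the products $C_{i,j}$ and the identity $g_i=z_iC_{i,k}$.
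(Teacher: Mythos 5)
Your proposal is correct and follows essentially the same route as the paper, which likewise derives the corollary from Proposition \ref{prop:two-cases} by translating Case 1 into the condition $c_n=1$ and Case 2 into the divisibility $z_m C_{m,k}\mid z_n C_{n,k}$, i.e.\ $z_j\mid z_iC_{i,j}$. Your explicit cycle of implications reconciling the equality $g_n=c_mg_m$ with the weaker divisibility $g_m\mid g_n$ is a point the paper leaves implicit, and it is a worthwhile clarification rather than a deviation.
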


\begin{ex}
Suppose we want a free numerical semigroup $S=\langle G\rangle$ where $G$ is minimal and telescopic with \[c(G)=(c_2, c_3, c_4, c_5)=(2, 3, 4, 5).\]

To generate a numerical semigroup, we need $\gcd(G)=1$, so we have $z_1=\gcd(G)=1$. Since $c_j>1$ for all $j$, for minimality we need only satisfy the non-divisibility conditions for $z_2,\dots,z_5$. We need $z_j\in\mathbb{N}$ where 
\begin{itemize}
\item $z_2\in\langle z_1\rangle$, $\gcd(z_2, 2)=1$, and $z_2\nmid 2z_1$;
\item $z_3\in\langle 2z_1, z_2\rangle$, $\gcd(z_3, 3)=1$, and $z_3\nmid 6z_1, 3z_2$;
\item $z_4\in\langle 6z_1, 3z_2, z_3\rangle$, $\gcd(z_4, 4)=1$, and $z_4\nmid 24z_1, 12z_2, 4z_3$;
\item $z_5\in\langle 24z_1, 12z_2, 4z_3,  z_4\rangle$, $\gcd(z_5, 5)=1$, and $z_5\nmid 120z_1, 60z_2, 20z_3, 5z_4$.
\end{itemize}

Any such $z_2,\dots,z_5$ satisfying the above will produce the desired result, and conversely every such numerical semigroup $S$ can be constructed in this way.

For a concrete example, we can take $z_2=3$, $z_3=5$, $z_4=11$, and $z_5=22$. For $g_i=z_i C_{i,5}$, we get $G=(120, 180, 100, 55, 22)$. We check that $\gcd(G)=1$, $G$ is minimal, and $G$ is telescopic with $c(G)=(2, 3, 4, 5)$, as desired.
\end{ex}

\begin{cor}\label{cor:increasing-telescopic-minimal}
Suppose $G=(g_1, \dots, g_k)\in\mathbb{N}^k$ is a telescopic and non-decreasing sequence with $c(G)=(c_2, \dots, c_k)$ where $c_j>1$ for all $j$. Then $G$ is minimal.

In particular, if $\gcd(G)=1$, then for the numerical semigroup $S=\langle G\rangle$, we have $e(S)=|G|$.
\end{cor}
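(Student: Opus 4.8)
The plan is to invoke Corollary \ref{cor:explicit-min}, which reduces minimality of a telescopic sequence to two checkable conditions. Since the hypothesis already supplies $c_j > 1$ for all $j = 2, \dots, k$, the first condition holds automatically, and it remains only to verify the non-divisibility condition $z_j \nmid z_i C_{i,j}$ for all $1 \le i < j \le k$, where $z_i = g_i / C_{i,k}$ in the notation of Corollary \ref{cor:explicit-description-telescopic}.

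First I would argue by contradiction: suppose $z_j \mid z_i C_{i,j}$ for some pair $i < j$. The point where the non-decreasing hypothesis enters is in bounding $z_i C_{i,j}$. Writing $g_i = z_i C_{i,k}$ and $g_j = z_j C_{j,k}$ and using $C_{i,k} = C_{i,j} C_{j,k}$ (immediate from the definition of $C_{m,n}$), the inequality $g_i \le g_j$ becomes $z_i C_{i,j} \le z_j$ after cancelling the common positive factor $C_{j,k}$. Now $z_i C_{i,j}$ is a positive integer that is at most $z_j$ yet divisible by $z_j$, so it must equal $z_j$ exactly, since the least positive multiple of $z_j$ is $z_j$ itself.

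The heart of the argument --- and the step I expect to be the main obstacle --- is deriving a contradiction from the forced equality $z_i C_{i,j} = z_j$ using the coprimality built into the explicit description. Setting $d = \gcd(G)$ and $w_m = z_m / d$, dividing the equality through by $d$ gives $w_j = w_i C_{i,j}$. Since $j > i$, the product $C_{i,j} = c_{i+1} \cdots c_j$ contains $c_j$ as a factor, so $c_j \mid w_j$. But Corollary \ref{cor:explicit-description-telescopic} guarantees $\gcd(w_j, c_j) = 1$, and because $c_j > 1$ this is impossible. Hence no such pair $(i,j)$ exists, the non-divisibility condition holds, and $G$ is minimal by Corollary \ref{cor:explicit-min}.

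Finally, for the \emph{in particular} claim, I would observe that a minimal sequence can contain no repeated term (a repeat would place one copy in the submonoid generated by the other, contradicting minimality) and that every term is positive since $G \in \mathbb{N}^k$. Thus the underlying set of $G$ consists of $k = |G|$ distinct positive integers and is the unique minimal generating set of $S = \langle G \rangle$, so $e(S) = |G|$.
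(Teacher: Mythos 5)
Your proof is correct, but it verifies the key non-divisibility condition by a different mechanism than the paper. The paper's proof stays entirely at the level of the terms $g_i$: it first notes that $c_j>1$ forces the non-decreasing sequence to be strictly increasing (a repeat $g_{j-1}=g_j$ would give $c_j=1$), and then invokes the criterion that, for a telescopic sequence with all $c_j>1$, minimality is equivalent to $g_m\nmid g_n$ for all $n<m$ --- which is immediate since $g_m>g_n>0$. You instead pass to the parameters $z_i=g_i/C_{i,k}$ of Corollaries \ref{cor:explicit-description-telescopic} and \ref{cor:explicit-min}, translate $g_i\le g_j$ into $z_iC_{i,j}\le z_j$, observe that divisibility then forces $z_iC_{i,j}=z_j$, and derive a contradiction from $c_j\mid z_j/d$ versus $\gcd(z_j/d,c_j)=1$. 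The two criteria are really the same condition in different coordinates (indeed $g_j\mid g_i$ if and only if $z_j\mid z_iC_{i,j}$), so both arguments ultimately rest on Proposition \ref{prop:two-cases}; the difference is that the paper kills divisibility by a size argument after upgrading to strict monotonicity, while your coprimality argument handles the boundary case $g_i=g_j$ directly and never needs strict increase. The paper's route is shorter and more elementary; yours has the mild advantage of working uniformly in the $z$-parametrization of Section \ref{sec:explicit-minimal-telescopic-construction}, at the cost of invoking the gcd conditions of the explicit description. Your treatment of the \emph{in particular} claim (the distinct positive terms of a minimal sequence form the unique minimal generating set) fills in what the paper leaves implicit from its definition of embedding dimension.
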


\begin{proof}
If $g_j=g_{j-1}$ for some $j>1$, then $c_j=1$. Since we assume $c_j>1$ for all $j$, we must have $g_j\ne g_{j+1}$, so $G$ is a strictly increasing sequence. In other words, $g_n<g_m$ whenever $n<m$.

Since $G$ is telescopic and $c_j>1$ for all $j$, $G$ is minimal if and only if $g_m\nmid g_n$ for all $1\le n<m\le k$. Since $g_m>g_n>0$, we have $g_m\nmid g_n$. Therefore $G$ is minimal.
\end{proof}

\section{Acknowledgments}
The author wishes to thank the anonymous referee for substantially helping to improve the presentation of this paper in many ways. Of particular note is the suggestion to write Lemma \ref{lem:shortened-telescopic}, which simplified a few subsequent proofs.

\bibliographystyle{jis}
\bibliography{refs}

\bigskip
\hrule
\bigskip

\noindent 2010 {\it Mathematics Subject Classification}: Primary 20M14;
Secondary 11B75.

\noindent \emph{Keywords:}
telescopic sequence, free numerical semigroup, minimal generating sequence, additive submonoid.

\end{document}